\newcommand{\bsb}{{\bm{b}}}
\newcommand{\bse}{{\bm{e}}}
\newcommand{\bsx}{{\bm{x}}}
\newcommand{\bsy}{{\bm{y}}}
\newcommand{\bsz}{{\bm{z}}}
\newcommand{\bsdelta}{{\boldsymbol{\delta}}}
\newcommand{\bsmu}{{\boldsymbol{\mu}}}
\newcommand{\bsnu}{{\boldsymbol{\nu}}}
\newcommand{\bsxi}{{\boldsymbol{\xi}}}
\newcommand{\bsrho}{{\boldsymbol{\rho}}}
\newcommand{\bsvarrho}{{\boldsymbol{\varrho}}}
\newcommand{\rd}{{\mathrm{d}}}
\newcommand{\bbC}{{\mathbb{C}}}
\newcommand{\bbN}{{\mathbb{N}}}
\newcommand{\bbR}{{\mathbb{R}}}
\newcommand{\bbT}{{\mathbb{T}}}
\newcommand{\C}{{\mathbb{C}}} %
\newcommand{\N}{{\mathbb{N}}} %
\newcommand{\R}{{\mathbb{R}}} %
\newcommand{\Z}{{\mathbb{Z}}} %
\newcommand{\NN}{{\mathbb{N}}} %
\DeclareSymbolFont{bbold}{U}{bbold}{m}{n}
\DeclareSymbolFontAlphabet{\mathbbold}{bbold}
\newcommand{\calB}{{\mathcal{B}}}
\newcommand{\calF}{{\mathcal{F}}}
\newcommand{\calG}{{\mathcal{G}}}
\newcommand{\calN}{{\mathcal{N}}}
\newcommand{\calO}{{\mathcal{O}}}
\newcommand{\calS}{{\mathcal{S}}}
\newcommand{\calX}{{\mathcal{X}}}
\newcommand{\calY}{{\mathcal{Y}}}
\newcommand{\ii}{{\mathrm i}} %
\newcommand{\eps}{{\varepsilon}}
\newcommand{\cC}{\mathcal{C}}
\newcommand{\cS}{\mathcal{S}}
\newcommand{\cG}{\mathcal{G}}
\newcommand{\cF}{\mathcal{F}}
\newcommand{\bsnul}{{\boldsymbol{0}}}
\newcommand{\norm}[2][]{\|#2\|_{#1}}
\newcommand{\set}[2]{\{#1 : #2\}}
\DeclareMathOperator{\supp}{supp}
\newcommand{\Cpsi}{C_\psi}
\newcommand{\be}{\begin{equation}}
\newcommand{\ee}{\end{equation}}
\theoremstyle{definition}
\newtheorem{definition}{Definition}[section]
\newtheorem{remark}{Remark}[section]
\newtheorem{theorem}{Theorem}[section]
\newtheorem{lemma}[theorem]{Lemma}
\newtheorem{assumption}[theorem]{Assumption}
\newtheorem{proposition}[theorem]{Proposition}
\newcommand{\dotH}{\dot{H}}
\newcommand{\infcard}{{\aleph_0}}
\begin{document}

\title{Sparsity for Infinite-Parametric Holomorphic Functions\\
  on Gaussian Spaces\thanks{CM and JZ acknowledge the support of the National Group for Scientific
  Computing (GNCS - INDAM) through the Visiting Professors program. CM acknowledges the support of the Italian Ministry of University and Research (MUR) through the PRIN 2022 PNRR project NOTES (P2022NC97R), funded by the European Union – Next Generation EU and the PRIN 2022 project ASTICE (202292JW3F). JZ
  acknowledges support from the German Research Foundation (DFG) within the
  Priority Programme SPP 2298, \textit{Theoretical Foundations of Deep Learning}
  (project number 543965776).}}

\author{
  Carlo Marcati\thanks{Dipartimento di Matematica ``Felice Casorati'', Università di Pavia,
 27100 Pavia, Italy. \texttt{carlo.marcati@unipv.it}}
  \and
  Christoph Schwab\thanks{SAM, ETH Z\"urich, ETH Zentrum, HG G57.1, CH 8092 Z\"urich, Switzerland. \texttt{schwab@math.ethz.ch}}
  \and
  Jakob Zech\thanks{IWR, Uni Heidelberg, INF 205, 69120 Heidelberg, Germany. \texttt{jakob.zech@uni-heidelberg.de}}
} 

\maketitle
\begin{abstract}
We investigate the sparsity of Wiener polynomial chaos expansions of
holomorphic maps $\cG$ on Gaussian Hilbert spaces, as arise in the coefficient-to-solution
maps of linear, second order, divergence-form elliptic PDEs with log-Gaussian
diffusion coefficient.
Representing the Gaussian random field input as an affine-parametric
expansion, the nonlinear map becomes a countably-parametric, deterministic
holomorphic map of the coordinate sequence
$\bm{y} = (y_j)_{j\in\mathbb{N}} \in \mathbb{R}^\infty$.
We establish weighted
summability results for the Wiener-Hermite coefficient sequences of images of
affine-parametric expansions of the log-Gaussian input under $\cG$.
These results give rise to $N$-term approximation rate bounds for the full range
of input summability exponents $p\in (0,2)$. We show that these approximation
rate bounds apply to parameter-to-solution maps for elliptic diffusion PDEs with
lognormal coefficients.
\end{abstract}

\noindent
{\bf Keywords.} Holomorphy, Elliptic PDEs, Wiener-Hermite coefficients, Gaussian measure spaces, $N$-term approximation.

\noindent
{\bf MSC codes.} 35J15, 41A25, 46G20

%
\section{Introduction}
\label{sec:Intro}
The mathematical and numerical analysis of partial differential equations (PDEs)
with uncertain input data (coefficients, domains, source and boundary terms, etc.)
is a key theme in the area of uncertainty quantification (UQ), which has
seen significant development in the past decade.
Roughly speaking, it involves the study of
(i)
\emph{parametric regularity of data-to-solution maps} of PDEs
with input data from (possibly infinite-dimensional) spaces subject to probability
measures supported on these spaces,
and
(ii) the pushforwards of these measures %
to spaces of PDE solutions and quantities of interest.
We refer to, e.g., \cite{ChSCJG11,CoDe2015,DNSZ2023} and the references therein
for accounts of results in this direction.

Most mathematical results are based on two key ideas. First,
bases (such as wavelets or eigenbases of covariance operators) are introduced
in the input space,
resulting in parametric representations of admissible PDE inputs. Then, the
regularity  of the solutions is analyzed in terms of these parameters.
Results of this type have proved relevant recently also in the mathematical and numerical
analysis of so-called \emph{``operator learning''} approaches in Scientific Machine Learning
\cite{HSZ22_1015,NeurOp,Franco2024}.

\subsection{Existing Results}
\label{sec:ExRes}
The problems considered in this manuscript have been studied with coordinate parameter domains
that are closed and bounded (hence compact) subsets of the real line.
See, e.g. \cite{HSZ22_1015} and references there.
Fewer results have been developed for unbounded parameter domains, which arise in the
important setting of Gaussian measures on input spaces. These arise in numerous
applications in engineering and the sciences.
We mention only subsurface flow \cite{DNCirculant97} modelling with Finite Elements
and Bayesian PDE inversion subject to Gaussian or subexponential priors \cite{HKS21}.
Although
the introduction of coordinates is, in the case of Gaussian measures,
well understood \cite{LPFrame09},
corresponding sparsity results for
parametric solution families of elliptic and parabolic PDEs
with log-Gaussian random field inputs
have advanced, upon early work in \cite{HoangSchwab14,G10_109,EGSZ14_1045,BCDM2016},
for a particular, scalar linear elliptic divergence-form PDE
significantly in the past five years.
See  \cite{DNSZ2023} and the references there.
Polynomial approximability of holomorphic maps between Banach spaces has received
attention for some time (e.g., \cite{MujDinAppPrBSpcI,DinMujiAppBSpcII,DinMujiAppBSpcIII}).
The question of \emph{approximation rates in the infinite-dimensional setting}
was addressed only more recently, particularly in the context of numerical analysis
of operator surrogates (e.g., \cite{HSZ22_1015} and \cite{NeurOp}).

The corresponding deterministic
parametric regularity analysis involves \emph{analytic estimates} of parametric solutions.
Roughly, there are two avenues for obtaining such bounds:
first, a real-variable bootstrapping argument as used in \cite{HoangSchwab14,BCDM2016,HKS21},
and
second, analytic continuation into the complex domain with precise control on the size
of the domain of analyticity \cite{DNSZ2023}, within the framework of holomorphy on
Banach spaces \cite{Munoz1999,MujiCplxAnBSpc}.
This regularity is key in establishing convergence rate bounds of $n$-term and sparse-grid
approximations by Wiener-Hermite polynomial chaos expansions, but also
on certain Quasi-Monte Carlo high-dimensional integration schemes,
all of whose rates are
not subject to the curse-of-dimensionality \cite{GKNS15_1104}.
\subsection{Contributions}
\label{sec:Contr}
  In this work, we develop the following contributions.
  \begin{enumerate}
  \item \label{item:contrib-summability}In an abstract setting,
    for holomorphic maps between separable Hilbert spaces,
    we obtain weighted summability (Theorem \ref{thm:bdXSum})
    and $N$-term approximation (Theorem \ref{thm:ubsnubound}) rate results
    which are free from the Curse of Dimensionality (``CoD'').
    These results hold in an extended, compared to those in \cite{DNSZ2023},
    range of summability exponents,
    albeit under a hypothesis of countably-parametric holomorphy
    which is slightly stronger than the one in \cite{DNSZ2023}.
    As in \cite{DNSZ2023}, the present results are not limited to scalar elliptic
        divergence-form PDEs, but rather apply to a wide range
        of operator equations with log-gaussian input data.
  \item \label{item:contrib-operators}
    We verify the abstract assumptions of the preceding result for
    countably parametric functions
    related to a large class of holomorphic operators between Hilbert spaces
    (Proposition \ref{prop:holomorphy}).
  \item\label{item:contrib-pde}
    We derive results on $p$-summability for $0<p<2$ of coefficient sequences in
      Wiener-Hermite polynomial chaos expansions of solutions
      of linear, second-order, divergence-form elliptic PDEs with log-Gaussian coefficients
      (Theorem \ref{thm:diffusion-holomorphy}).
\end{enumerate}
When applied to the operators of item \ref{item:contrib-operators} and the
  PDEs of item \ref{item:contrib-pde},
  the results that we prove in this work improve upon those in
  \cite{DNSZ2023}. We obtain, indeed, faster convergence rates for best $N$-term
  approximation and an increased
  summability range, as outlined in Remarks \ref{remark:convergence-rates} and \ref{rmk:CmpBCDM2016}.
\subsection{Layout}
\label{sec:outline}
The structure of this article is as follows.
In Section~\ref{sec:GMWgtSpc}, we set notation, recap known results on Gaussian
measures on function spaces from, e.g., \cite{DPZ2014,Bogachev1998}, and
introduce an extended (with respect to \cite{DNSZ2023})
notion of countably parametric holomorphy.
Section~\ref{sec:InfParHol} contains the definitions of parametric holomorphy
upon which our $n$-term approximation rate bounds are based.
In Section~\ref{sec:Spars},
we prove a summability result for Wiener-Hermite chaos expansion coefficients
of countably-parametric holomorphic functions.
This
result is used in Section~\ref{sec:hermite-appx} to prove approximation rates
for best $N$-term Hermite approximations.
In Section \ref{sec:ParHolOp},
we verify the assumptions in the abstract results
from Sections~\ref{sec:InfParHol}-\ref{sec:hermite-appx}
for a class of holomorphic operators between
separable Hilbert spaces (satisfying Assumption \ref{ass:G}).
The appendices contain
auxiliary results and proofs for
the main contributions of this work.
\subsection{Notation}
\label{sec:Notat}
We indicate the set of positive integers by $\N$
and write $\N_0 = \N \cup \{ 0 \}$.
We denote,
for $N\in \N$ and
for $\bsvarrho = ( \varrho_i)_{i=1,...,N} \in (0,\infty)^N$,
\begin{equation*}
  \cS_\bsvarrho^\C:=\set{\bsz\in\C^N}{|\Im(z_i)|<\varrho_i~\forall i}\subseteq\C^N.
\end{equation*}
We denote\footnote{Here, and throughout this article,
with slight abuse of notation,
we write countable cartesian products of a set $A$ by $A^\infty$, with the symbol
$\infty$ denoting, in this context, the $\infcard$ of G. Cantor, as e.g. in \cite{Bogachev1998}.
}
by $\calF$ sequences in
$\N_0^{\infty} = \bigtimes_{i\in\N}\N_0$
\emph{with finite support},
i.e.,
for each $\bsnu = (\nu_j)_{j\geq 1} \in \calF$,
only finitely many $\nu_j$ are nonzero.

For $M\in \bbN$ and for sequences
$\bsvarrho = (\varrho_j)_{j\geq 1} \subset (0,\infty)^\infty$,
we define
\begin{equation}
  \label{eq:beta}
  \beta_\bsnu(M,\bsvarrho)
  \coloneqq
  \sum_{\|\bsnu'\|_{\ell^\infty}\leq M} \binom{\bsnu}{\bsnu'} \bsvarrho^{2\bsnu'}
  = \prod_{j \in \N}\Bigg(\sum_{\ell=0}^{M}\binom{\nu_j}{\ell}\varrho_j^{2\ell}\Bigg), \quad \bsnu \in \calF
  \,.
\end{equation}

\section{Preliminaries}
\subsection{Gaussian measures and spaces}
\label{sec:GMWgtSpc}
We denote the product Gaussian measures (see \cite[page 372]{Bogachev1998}) on $\R^N = \bigtimes_{i=1}^N\R $
and
$\R^\infty  = \bigtimes_{i\in\N}\R$, respectively,
by
\begin{equation*}
\gamma_N = \bigotimes_{i=1}^N\calN(0, 1), \quad\text{and}\quad  \gamma = \bigotimes_{i\in \N}\calN(0, 1).
\end{equation*}
Then,
$L^2_\gamma(\R^\infty)$ is the space of real-valued, square $\gamma$-summable
functions, i.e., functions $f$ with finite norm given by
\begin{equation*}
  \| f\|_{L^2_\gamma(\R^\infty)} ^2 \coloneqq \int_{\R^\infty} |f(\bsy)|^2\gamma(d\bsy) < \infty,
\end{equation*}
with the obvious modification for functions defined on $\R^N$.

For a separable Hilbert space $X$, the Bochner space of $X$-valued
functions $L^2_\gamma(\R^\infty,X) $  satisfies
\begin{equation*}
L^2_\gamma(\R^\infty,X) = L^2_\gamma(\R^\infty)\otimes X
\end{equation*}
with Hilbertian tensor product.
We equip $L^2_\gamma(\R^\infty,X)$ with the norm given by
\begin{equation*}
  \| v \|_{L^2_\gamma(\R^\infty, X)}^2 \coloneqq \int_{\R^\infty} \| v(\bsy)\|^2_X \gamma(d\bsy).
\end{equation*}
\subsection{Infinite parametric holomorphy}
\label{sec:InfParHol}
We introduce the main subject of this paper, i.e.,
countably parametric holomorphic functions.
Compared with the definition given, e.g., in \cite[Definition 4.1]{DNSZ2023},
we introduce an additional parameter %
$r\in [1,\infty]$
in the definition of admissibility of a
weight sequence $\bsvarrho \in (0, \infty)^\infty$.
As a consequence,
the same parameter $r$ is present in the
definition of ($\bsb,\xi,r, \delta, X$)-holomorphy.

The definition in \cite[Definition 4.1]{DNSZ2023} corresponds
to the case $r=1$, while in this paper we are concerned with $r=2$.
By choosing $r=2$ we obtain a stronger regularity requirement,
which will be instrumental in
proving the sparsity bounds in Theorem \ref{thm:bdXSum} and
the approximation rates of Theorem \ref{thm:ubsnubound}.

The justification for imposing the
stronger requirement $r=2$ is given in Proposition \ref{prop:holomorphy} and in
Theorem \ref{thm:diffusion-holomorphy}, where we show that some functions
related to the solution operator of elliptic PDEs are
($\bsb,\xi, 2, \delta, X$)-holomorphic.
A comparison between the notions of
holomorphy for different values of $r$ is given in Remark
\ref{remark:r-comparison} below.
\begin{definition}
\label{def:Adm}
  Let  $\bsb=(b_j)_{j\in\N}\in (0,\infty)^\infty$,
  $\xi>0$, $\delta>0$, and $r\in [1,\infty]$.
  We say that $\bsvarrho\in (0,\infty)^\infty$
  is $(\bsb,\xi,r)$-admissible if
  \begin{equation*}
    \sum_{j\in\N} b_j^r \varrho_j^r \leq \xi^r \;\;\text{if}\;\; r<\infty,
    \qquad \max_{j\in\N} b_j\varrho_j \leq \xi \;\;\text{if}\;\; r = \infty.
  \end{equation*}
\end{definition}

\begin{definition}[($\bsb,\xi,r, \delta, X$)-Holomorphy]
\label{def:bdXHol}
  Let $X$ be a complex, separable Hilbert space.
  Let
  $\bsb=(b_j)_{j\in\N} \in (0,\infty)^\infty$, $\xi>0$, $\delta>0$, and $r\in [1,\infty]$.
  A function $u\in L^2_\gamma(\R^\infty,X)$ is called
  \emph{$(\bsb,\xi,r, \delta, X)$-holomorphic}
  if
  \begin{enumerate}
  \item\label{item:hol} for every $N\in\N$ there exists
    $u_N:\R^N\to X$, which, for every $(\bsb,\xi, r)$-admissible
    $\bsvarrho\in (0,\infty)^N$, admits a holomorphic extension
    (denoted again by $u_N$) from $\cS_\bsvarrho^\C\to X$; furthermore,
    for all $N<M$
    \begin{equation}\label{eq:un=um}
      u_N(y_1,\dots,y_N)=u_M(y_1,\dots,y_N,0,\dots,0)\qquad\forall (y_j)_{j=1}^N\in\R^N,
    \end{equation}

  \item\label{item:varphi} for every $N\in\N$ there exists
    $\varphi_N:\R^N\to\R_+$ such that
    $\norm[L^2_{\gamma_N}(\R^N)]{\varphi_N}\le\delta$ and
    \begin{equation*} \label{ineq[phi]}
      \sup_{\substack{\bsvarrho\in(0,\infty)^N\\
          \text{is $(\bsb,\xi)$-adm.}}}~\sup_{\bsz\in
        \calB(\bsvarrho)}\norm[X]{u_N(\bsy+\bsz)}\le
      \varphi_N(\bsy)\qquad\forall\bsy\in\R^N,
    \end{equation*}
  \item\label{item:vN} with $\tilde u_N:\R^\infty \to X$ defined by
    $\tilde u_N(\bsy) :=u_N(y_1,\dots,y_N)$ for $\bsy\in \R^\infty$
  it holds that
    \begin{equation*}
      \lim_{N\to\infty}\norm[L^2_\gamma(\R^\infty,X)]{u-\tilde u_N}=0.
    \end{equation*}
  \end{enumerate}
\end{definition}
\begin{remark}
\label{remark:r-comparison}
A sequence $\bsvarrho\in (0, \infty)^\infty$ is denoted
$(\bsb, \xi)$-admissible in \cite[Definition 4.1]{DNSZ2023}
when it is $(\bsb, \xi,1)$-admissible in the notation of the present paper.
When $\bsb\in \ell^1(\bbN)$, if
$\bsvarrho$ is $(\bsb, \xi, r)$-admissible,
then it is also $(\bsb, \widetilde\xi, \widetilde{r})$-admissible
for $1\leq \widetilde{r}\leq r\leq \infty$ and for some value of $\widetilde\xi$.

Similarly, a ($\bsb,\xi, \delta, X$)-holomorphic function following
\cite[Definition 4.1]{DNSZ2023} is denoted here ($\bsb,\xi, 1, \delta, X$)-holomorphic.
It immediately follows from the %
argument above that %
$(\bsb, \xi, r, \delta,X)$-holomorphy implies
$(\bsb, \widetilde\xi, \widetilde{r}, \delta, X)$-holomorphy for all
$1\leq \widetilde{r}\leq r\leq \infty$ and for some value of $\widetilde\xi$.
\end{remark}

\section{Sparsity for $(b, \xi, 2, \delta, X)$-holomorphic functions}
\label{sec:Spars}
We recall the isometric isomorphy
$
L^2_\gamma(\R^\infty,X) = L^2_\gamma(\R^\infty)\otimes X
$
with Hilbertian tensor product.
Any $u\in L^2_\gamma(\R^\infty, X)$
can be represented in a Wiener-Hermite polynomial chaos (PC) expansion
\begin{equation}\label{eq:uHermiteExp}
  u=\sum_{\bsnu\in \calF} u_\bsnu H_\bsnu,
\end{equation}
where
\begin{equation}
  \label{eq:unu}
  u_\bsnu=\int_{\R^\infty} u(\bsy) H_\bsnu(\bsy)  \gamma(d\bsy)
\end{equation}
are the Wiener-Hermite PC expansion coefficients.
Also, we have the Parseval-type identity
\begin{equation*}\label{eq:PCParseval}
  \|u\|_{L^2_\gamma(\R^\infty,X)}^2=\sum_{\bsnu \in \cF}
  \|u_\bsnu\|_X^2\,.
\end{equation*}
In  the following statement, we study the summability of the Wiener-Hermite
coefficients of a $(\bsb,\xi,2, \delta,X)$-holomorphic function $u$. The proof
follows that of \cite[Theorem 4.9]{DNSZ2023}, which treats the case $r=1$.
The stronger hypothesis on $u$ imposed here implies a stronger result on the
summability of the coefficients, which is instrumental for the approximation
result given in Theorem \ref{thm:ubsnubound}.
\begin{theorem}\label{thm:bdXSum}
Let $u$ be
$(\bsb,\xi,2, \delta,X)$-holomorphic for some
$\bsnul \neq \bsb \in \ell^p(\N)$, with a $p\in (0,2)$.
Let $M\in\N$ with $M\geq 2/p$, and
let $\bsvarrho \in (0, \infty)^\infty$
be such that
 \begin{equation} \label{eq:varrho_j}
   \varrho^2_jb_j^2:=\left( \frac{b_j}{\norm[\ell^p]{\bsb}} \right)^p\frac{\xi^2}{8eM! }, \quad \forall
  j\in\N.
\end{equation}
 Then,
  	\begin{equation} \label{eq:general}
  		\sum_{\bsnu\in\calF}\beta_\bsnu(M,\bsvarrho)\|u_\bsnu\|_X^2
  		\le
  		2\delta^2, \qquad
  		 \big(\beta_\bsnu(M,\bsvarrho)^{-1}\big)_{\bsnu\in\calF} \in \ell^{p/(2-p)}(\calF),
  	\end{equation}
  	 where the weight sequence $ \beta_\bsnu(M,\bsvarrho)$ is defined in \eqref{eq:beta}.
  	Furthermore,
  	$$
  	(\|u_\bsnu\|_X)_{\bsnu\in\calF}\in \ell^{p}(\calF).
  	$$
\end{theorem}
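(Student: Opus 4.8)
The goal is a weighted summability estimate plus two $\ell^q$-type conclusions, so the plan is to follow the $r=1$ template of \cite[Theorem 4.9]{DNSZ2023} but exploit the stronger $r=2$ admissibility to gain in the exponents. Let me sketch the three stages.

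First, I would establish the weighted bound $\sum_{\bsnu} \beta_\bsnu(M,\bsvarrho)\|u_\bsnu\|_X^2 \le 2\delta^2$. The natural tool is a Wiener--Hermite analogue of Cauchy's estimates. Fixing $N$, I would use the holomorphic extension $u_N$ from Definition~\ref{def:bdXHol}\ref{item:hol} together with the $L^2_{\gamma_N}$-dominating function $\varphi_N$ from \ref{item:varphi} ($\|\varphi_N\|_{L^2_{\gamma_N}}\le\delta$). The coefficients $u_\bsnu$ are Hermite--Fourier coefficients; applying Cauchy-type contour integration in each variable over polydiscs/strips of radius controlled by an admissible $\bsvarrho$, one bounds $\|u_\bsnu\|_X$ by a product of factorial-weighted powers $\varrho_j^{-\nu_j}$ times the $L^2_{\gamma_N}$-norm of the majorant. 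Squaring, summing, and recognizing the combinatorial sum as $\beta_\bsnu(M,\bsvarrho)$ gives the weighted estimate; the factor $2$ absorbs the passage from finite $N$ to the limit via \ref{item:vN} and monotone convergence. The specific choice of $\varrho_j$ in~\eqref{eq:varrho_j} is engineered so that $\bsvarrho$ is $(\bsb,\xi,2)$-admissible: indeed $\sum_j b_j^2\varrho_j^2 = \frac{\xi^2}{8eM!}\sum_j (b_j/\|\bsb\|_{\ell^p})^p = \frac{\xi^2}{8eM!}$, which sits comfortably below $\xi^2$.

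Second, for the middle claim $(\beta_\bsnu(M,\bsvarrho)^{-1})_\bsnu \in \ell^{p/(2-p)}(\calF)$, I would work from the product formula on the right of~\eqref{eq:beta}. Writing $q = p/(2-p)$, summability of $(\beta_\bsnu^{-1})^q$ over $\calF$ reduces, by the standard argument for products over finitely-supported multi-indices, to convergence of $\prod_j (1 + c\,\varrho_j^{2q}b_j^{\,?} + \cdots)$, i.e.\ to a tail condition on the single factors. The $r=2$ choice makes $\varrho_j^2 b_j^2 \sim (b_j/\|\bsb\|_{\ell^p})^p$, so the relevant series is essentially $\sum_j \varrho_j^{2q}$ or $\sum_j (b_j^p)^{\,\text{(something)}}$, and one checks it converges precisely when $M \ge 2/p$; this is where the lower threshold on $M$ is consumed. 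The bookkeeping here is the combinatorial heart: one must verify that the binomial-sum factor $\sum_{\ell=0}^M \binom{\nu_j}{\ell}\varrho_j^{2\ell}$ grows fast enough in the high-order directions to beat the entropy of $\calF$.

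Third, the final conclusion $(\|u_\bsnu\|_X)_\bsnu \in \ell^p(\calF)$ follows by a Hölder argument combining the first two parts. Writing $\|u_\bsnu\|_X^p = \big(\beta_\bsnu\|u_\bsnu\|_X^2\big)^{p/2}\cdot \beta_\bsnu^{-p/2}$ and applying Hölder with conjugate exponents $2/p$ and $2/(2-p)$, the first factor is summable to the power $2/p$ by the weighted bound, and the second is summable to the power $2/(2-p)$ precisely because $(\beta_\bsnu^{-1})_\bsnu \in \ell^{p/(2-p)}$. The exponents match exactly, so the product lies in $\ell^1$, giving $\ell^p$-summability of $(\|u_\bsnu\|_X)_\bsnu$.

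I expect the second step --- verifying $(\beta_\bsnu^{-1})_\bsnu \in \ell^{p/(2-p)}$ --- to be the main obstacle, since it is where the $r=2$ hypothesis pays off and where the threshold $M\ge 2/p$ enters. The delicate point is controlling the product over infinitely many $j$ while simultaneously capturing the $\binom{\nu_j}{\ell}$-weighted growth in each coordinate; getting the infinite product to converge requires the summability $\bsb\in\ell^p$ fed through the quadratic ($r=2$) admissibility, and tracking constants carefully enough that the explicit normalization in~\eqref{eq:varrho_j} does the job.
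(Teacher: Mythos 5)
Your overall skeleton (weighted $\ell^2$ bound, then summability of the inverse weights, then H\"older) matches the paper's, and your second and third stages are essentially what the paper does: the inverse-weight summability follows from the observation that \eqref{eq:varrho_j} gives $(\varrho_j^{-1})_{j\in\N}\in\ell^{2p/(2-p)}(\N)$ together with \cite[Lemma 3.11]{DNSZ2023} (which your product-factorization sketch would in effect reprove), and the final H\"older step with exponents $2/p$ and $2/(2-p)$ is exactly the paper's. Two small corrections there: the critical threshold coming out of that lemma is $M>(2-p)/p$, for which the hypothesis $M\ge 2/p$ is sufficient rather than sharp; and the constant $2$ in \eqref{eq:general} does not come from the passage $N\to\infty$ (which preserves the constant), but from a geometric series bound $1/(1-\|\tilde\bsb\|_{\ell^1})=2$ engineered by the normalization $\xi^2/(8eM!)$ in \eqref{eq:varrho_j}.

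The genuine gap is in your first stage. You propose to bound each individual coefficient $\norm[X]{u_\bsnu}$ by Cauchy contour integration and then square and sum against $\beta_\bsnu(M,\bsvarrho)$. This route fails: the best individual bound obtainable this way (combining Rodrigues' formula $u_{N,\bsnu}=(\bsnu!)^{-1/2}\int_{\R^N}\partial^\bsnu u_N\,d\gamma_N$ with the Cauchy estimate of Lemma~\ref{holo-lem1}) is of the form
\begin{equation*}
  \norm[X]{u_{N,\bsnu}}^2\;\lesssim\;\bsnu!\,\frac{|\bsnu|^{|\bsnu|}}{\bsnu^{\bsnu}}\,\frac{\bsb_N^{2\bsnu}}{\kappa^{2|\bsnu|}}\,\delta^2,
\end{equation*}
and already the one-dimensional subsum over $\bsnu=(n,0,0,\dots)$ of $\beta_\bsnu\norm[X]{u_{N,\bsnu}}^2$ is then controlled only by $\sum_n n!\,(b_1/\kappa)^{2n}$, which diverges for every $b_1>0$. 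Summing individual Cauchy bounds (a triangle-inequality argument) cannot produce the dimension-independent constant $2\delta^2$; orthogonality must be exploited. The missing ingredient is the exact Parseval-type identity of Lemma~\ref{holo-lem2} (from \cite[Lemma 4.7]{DNSZ2023}, \cite[Theorem 3.3]{BCDM2016}),
\begin{equation*}
  \sum_{\bsnu\in\N_0^N}\beta_\bsnu(M,\bsvarrho_N)\norm[X]{u_{N,\bsnu}}^2
  \;=\;
  \sum_{\bsnu\in\N_0^N:\ \|\bsnu\|_{\ell^\infty}\le M}\frac{\bsvarrho_N^{2\bsnu}}{\bsnu!}\,\norm[L^2_{\gamma_N}(\R^N,X)]{\partial^\bsnu u_N}^2,
\end{equation*}
in which the binomial weights $\beta_\bsnu$ arise from the algebra of differentiating Hermite expansions, not from Cauchy's formula. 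Its crucial feature is that the right-hand side involves only derivatives of coordinatewise order at most $M$; Cauchy estimates are applied only to those, so that $\bsnu!\le M!^{|\bsnu|}$ becomes a geometric factor which the normalization in \eqref{eq:varrho_j} absorbs, yielding the uniform bound $C_M(\bsb_N)\le 2$ of Proposition~\ref{prop:bdHolSum}. Without this identity (or an equivalent orthogonality argument) your first stage does not go through.
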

\begin{proof}
  Let $u_N\in L^2_\gamma(\R^\infty,X)$ be any function satisfying items
  \ref{item:hol}--\ref{item:vN} of
  Definition~\ref{def:bdXHol}. Similarly to \eqref{eq:uHermiteExp}, we write
  $$
  u_{N,\bsnu}:= \int_{\R^\infty} u_N(\bsy) H_\bsnu(\bsy)\gamma(d\bsy)
  \in X
  $$
  for any $\bsnu\in\calF$.
By definition, $u_N\to u\in L^2_\gamma(\R^\infty,X)$, hence
  $$\lim_{N\to\infty} u_{N,\bsnu}=u_{\bsnu}$$
  for all
  $\bsnu\in\calF$.
  Furthermore, since $u_N$ is independent of the variables
  $(y_j)_{j=N+1}^\infty$ and by the orthogonality to constants of Hermite polynomials $H_k$,
  $k>0$, we have $u_{N,\bsnu}=0$ whenever
  $\supp\bsnu\not\subseteq \{1,\dots,N\}$. It follows that
  \begin{equation}
    \label{eq:bound-uNnu}
    \sum_{\bsnu\in\calF}\beta_\bsnu(M,\bsvarrho) \norm[X]{u_{N,\bsnu}}^2
=\sum_{\bsnu\in\bbN_0^N}\beta_\bsnu(M,\bsvarrho_N) \norm[X]{u_{N,\bsnu}}^2
    \le
    2\delta^2\qquad\forall N\in\N,
  \end{equation}
 where the second inequality follows from Proposition~\ref{prop:bdHolSum} and
 $\bsvarrho_N$ is the $N$-term truncation of $\bsvarrho$.
  For any finite $\Lambda\subset\calF$, then,
  \begin{equation}
    \label{eq:bound-unu-Lambda}
    \sum_{\bsnu\in\Lambda}
    \beta_\bsnu(M,\bsvarrho) \norm[X]{u_{\bsnu}}^2
    =\lim_{N\to\infty}\sum_{\bsnu\in\Lambda}
    \beta_\bsnu(M,\bsvarrho) \norm[X]{u_{N,\bsnu}}^2\le 2\delta^2.
  \end{equation}
  Since $\Lambda\subset\calF$ was arbitrary and finite,
  combining \eqref{eq:bound-uNnu} and \eqref{eq:bound-unu-Lambda} gives
  \begin{equation} \nonumber
  	\sum_{\bsnu\in\calF}\beta_\bsnu(M,\bsvarrho)\|u_\bsnu\|_X^2
  	\le
  	2\delta^2.
  \end{equation}
  Finally,
due to $\bsb\in (0,\infty)^{\infty} \cap \ell^p(\NN)$ with some $0<p<2$,
definition \eqref{eq:varrho_j} implies directly that
  $(\varrho_j^{-1})_{j\in\N} \in \ell^{2p/(2-p)}(\N)$.
  By  \cite[Lemma 3.11]{DNSZ2023}
  \begin{equation} \label{beta_nu-summability}
  (\beta_\bsnu(M,\bsvarrho)^{-1})_{\bsnu\in\calF}\in\ell^{p/(2-p)}(\calF),
  \end{equation}
  which concludes the proof of \eqref{eq:general}.

To prove the $p$-summability of $\norm[X]{u_{N,\bsnu}}$ we use H\"older's inequality to infer
  \begin{align*}
    \sum_{\bsnu\in\calF}\norm[X]{u_{N,\bsnu}}^{p}
    &= \sum_{\bsnu\in\calF}\norm[X]{u_{N,\bsnu}}^{p}\beta_{\bsnu}(M,\bsvarrho_N)^{\frac p 2}\beta_{\bsnu}(M,\bsvarrho_N)^{-\frac p 2}\nonumber\\
    &\le \Bigg(\sum_{\bsnu\in\calF}\norm[X]{u_{N,\bsnu}}^{2}\beta_{\bsnu}(M,\bsvarrho_N) \Bigg)^{\frac p 2}\Bigg(\sum_{\bsnu\in\N_0^N}\beta_{\bsnu}(M,\bsvarrho_N)^{\frac{p}{p-2}} \Bigg)^{\frac{2-p}{2}}.
  \end{align*}
  Hence using \eqref{beta_nu-summability} we obtain $(\norm[X]{u_\bsnu})_{\bsnu\in\calF}\in\ell^{p}(\calF)$.
\end{proof}

\section{Constructive Hermite approximation}
\label{sec:hermite-appx}
We now discuss how to use our analysis for constructing approximations
based on truncated Hermite expansions, or sparse-grid interpolation
based on Gauss-Hermite interpolation points. We start by recalling\footnote{In \cite{DNSZ2023}, the result assumes a certain parameter $\tau$ to be positive; the arguments remain valid for $\tau = 0$ however, which is the case considered in Lemma \ref{lemma:cnu}. For completeness, we include a proof of Lemma \ref{lemma:cnu} in Appendix~\ref{sec:proofcnu}, noting that it follows \cite{DNSZ2023} with only minor modifications.}
\cite[Lemma 6.5]{DNSZ2023} and \cite[Lemma 6.6]{DNSZ2023}.

\begin{lemma}\label{lemma:cnu}
  Let $M>1$, $\bsvarrho\in (0,\infty)^\infty$ and
  let $\beta_{\bsnu}(M,\bsvarrho)$ be as in \eqref{eq:beta}.
  Denote, for any $\bsnu\in\calF$,
  \begin{equation}\label{eq:cbsnu}
    c_{\bsnu}=\prod_{j\in\supp\bsnu}\max\{1,K\varrho_j\}^2\nu_j^{M}.
  \end{equation}
  Then, there exist $K$, $C_\beta>0$ depending on
  $M$ and $\bsvarrho$ such that,
  \begin{equation}
    \label{eq:cnu-betanu}
    C_\beta c_\bsnu\le \beta_\bsnu(M,\bsvarrho)\qquad\forall\bsnu\in\calF.
  \end{equation}
  Moreover, if $q>\frac{2}{M}$ and $(\varrho_j^{-1})_{j\in\N}\in\ell^q(\N)$, then
  \begin{equation*}
    (c_\bsnu^{-1})_{\bsnu\in\calF}\in\ell^{q/2}(\calF).
  \end{equation*}
\end{lemma}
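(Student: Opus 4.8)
The plan is to reduce both assertions to a one-dimensional (single-index) estimate by exploiting the product structure in \eqref{eq:beta}. Writing $f(\nu,\varrho):=\sum_{\ell=0}^{M}\binom{\nu}{\ell}\varrho^{2\ell}$ for $\nu\in\N_0$, $\varrho>0$, and noting that the factors with $\nu_j=0$ equal $1$, we have $\beta_\bsnu(M,\bsvarrho)=\prod_{j\in\supp\bsnu}f(\nu_j,\varrho_j)$, while by \eqref{eq:cbsnu} $c_\bsnu=\prod_{j\in\supp\bsnu}\max\{1,K\varrho_j\}^2\nu_j^M$. Hence \eqref{eq:cnu-betanu} is equivalent to a uniform lower bound on the per-factor ratio $g_j(\nu):=f(\nu,\varrho_j)\big/\big(\max\{1,K\varrho_j\}^2\nu^M\big)$, and the summability of $(c_\bsnu^{-1})_\bsnu$ will follow from the factorization of sums over $\calF$.

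For the first claim I would fix $K\le1$ and a threshold $\varrho_\star=\varrho_\star(M,K)\ge 1/K$, and start from the elementary bound obtained by keeping a single term of the defining sum, namely $f(\nu,\varrho)\ge\binom{\nu}{m}\varrho^{2m}$ with $m:=\min\{\nu,M\}$. One then checks that $g_j(\nu)\ge1$ for every $\nu\ge1$ as soon as $\varrho_j\ge\varrho_\star$ (so that $\max\{1,K\varrho_j\}=K\varrho_j$): for $\nu\ge M$ one divides $\binom{\nu}{M}\varrho_j^{2M}\ge K^2\varrho_j^2\nu^M$ by $\nu^M$ and uses that $\binom{\nu}{M}\nu^{-M}$ is bounded below by a positive constant depending only on $M$, together with $\varrho_j^{2M-2}\to\infty$ (here $M>1$, so $2M-2>0$, is essential); for $2\le\nu<M$ one uses $f(\nu,\varrho_j)\ge\varrho_j^{2\nu}$ and $2\nu-2>0$; and the borderline case $\nu=1$ reduces to $1+\varrho_j^2\ge K^2\varrho_j^2$, which is exactly where $K\le1$ is needed.

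Since the weight sequences of interest satisfy $\varrho_j\to\infty$ (equivalently $\varrho_j^{-1}\to0$; this holds in particular under the summability hypothesis used for the second assertion, and for the admissible weights to which the lemma is applied), only finitely many indices, say $j\in S_0$, fall below $\varrho_\star$. For each such $j$ the ratio $g_j(\nu)$ is positive for every finite $\nu$ and tends to $\varrho_j^{2M}\big/\big(M!\,\max\{1,K\varrho_j\}^2\big)>0$ as $\nu\to\infty$, so $c_j:=\inf_{\nu\ge1}g_j(\nu)>0$. Setting $C_\beta:=\prod_{j\in S_0}\min\{1,c_j\}>0$ and using $g_j\ge1$ off $S_0$ gives $\beta_\bsnu(M,\bsvarrho)/c_\bsnu=\prod_{j\in\supp\bsnu}g_j(\nu_j)\ge\prod_{j\in S_0}\min\{1,c_j\}=C_\beta$, which is \eqref{eq:cnu-betanu}.

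The second claim is a direct computation: since a sum over $\calF$ of a product over the support factorizes,
\[
  \sum_{\bsnu\in\calF}c_\bsnu^{-q/2}
  =\prod_{j\in\N}\Bigg(1+\max\{1,K\varrho_j\}^{-q}\sum_{\nu\ge1}\nu^{-Mq/2}\Bigg).
\]
The inner series converges exactly because $q>2/M$ forces $Mq/2>1$, and the infinite product converges since $\sum_{j}\max\{1,K\varrho_j\}^{-q}\le K^{-q}\sum_j\varrho_j^{-q}<\infty$ by $(\varrho_j^{-1})_j\in\ell^q(\N)$; this yields $(c_\bsnu^{-1})_\bsnu\in\ell^{q/2}(\calF)$. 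The main obstacle is the first part: extracting a lower bound of the prescribed shape $\max\{1,K\varrho_j\}^2\nu^M$ uniformly in $\nu$, since retaining a single term of $f$ only supplies $\varrho^{2\min\{\nu,M\}}$. The small values $\nu\in\{1,\dots,M-1\}$, above all $\nu=1$, both pin down the admissible range of $K$ and force the separate, uniform-in-$\nu$ treatment of the finitely many under-threshold indices that is then absorbed into the constant $C_\beta$.
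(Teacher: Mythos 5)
Your proof is correct and follows essentially the same route as the paper's: a single-term lower bound on each factor of $\beta_\bsnu(M,\bsvarrho)$, absorption of the finitely many indices with $\varrho_j$ below a threshold into the constant $C_\beta$, and the identical coordinate-wise factorization of $\sum_{\bsnu\in\calF}c_\bsnu^{-q/2}$ using $qM/2>1$ and $(\varrho_j^{-1})_{j\in\N}\in\ell^q(\N)$. Your explicit reliance on $\varrho_j\to\infty$ for the first claim is not a deviation, since the paper's proof uses the same fact implicitly (its $C_\beta$ is likewise a product over the finitely many $j$ with $K\varrho_j<1$, which is finite only because $\varrho_j\to\infty$).
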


The sequence $(c_\bsnu)_{\bsnu\in\calF}$ can be used to approximate
$u$ via thresholding on these coefficients: let $(\bsnu_i)_{i\in\N}$
any enumeration of $\calF$ such that
$(c_{\bsnu_i})_{i\in \bbN}$ is monotonically increasing in the sense that
\begin{equation}
  i\leq j\quad\implies\quad
  c_{\bsnu_i}\le c_{\bsnu_j}\qquad\forall i,j\in\bbN.
\end{equation}
Then, we can approximate
\begin{equation}\label{eq:LambdaN}
  u(\bsy)\simeq \sum_{\bsnu\in\Lambda_N}u_\bsnu H_\bsnu(\bsy)
\qquad\text{where}\qquad
  \Lambda_N:=\set{\bsnu_i}{i\le N}.
\end{equation}
The rationale behind \eqref{eq:LambdaN} is that $c_\bsnu$ gives an
indication on the size of the Hermite coefficient $u_\bsnu$ due to
the summability property
\begin{equation*}
  \sum_{\bsnu\in\calF}c_\bsnu\norm[X]{u_\bsnu}^2<\infty
\end{equation*}
which is an immediate consequence of Theorem \ref{thm:bdXSum} and
Lemma \ref{lemma:cnu}.
We obtain the following \textit{a priori} convergence-rate
bound on this approximation.
\begin{theorem}\label{thm:ubsnubound}
  Let $u$ be $(\bsb,\xi,2, \delta,X)$-holomorphic with
  $\bsb\in \ell^p(\N)$ for some $p\in (0,2)$.
  Let
  $M\in\N$ with
  $M>2/p$, let $\varrho_j$ be as in \eqref{eq:varrho_j},
  and let
  $\Lambda_N$ be as in \eqref{eq:LambdaN}.

  Then
  \begin{equation*}
  \left\|u(\bsy)-\sum_{\bsnu\in\Lambda_N}u_\bsnu H_\bsnu(\bsy)\right\|_{L^2_\gamma(\R^\infty,X)}
  =
  \calO(N^{-\frac{1}{p}+\frac{1}{2}})\qquad\text{as }N\to\infty.
  \end{equation*}
\end{theorem}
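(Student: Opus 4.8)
The plan is to reduce the $L^2_\gamma(\R^\infty,X)$ error to a tail of the Wiener--Hermite series via the Parseval identity, and then to control this tail through the weighted summability bound of Theorem~\ref{thm:bdXSum} combined with the comparison inequality \eqref{eq:cnu-betanu} and the $\ell^{p/(2-p)}$-summability of $(c_\bsnu^{-1})$ furnished by Lemma~\ref{lemma:cnu}. First I would invoke orthonormality of $\{H_\bsnu\}_{\bsnu\in\calF}$: since $\Lambda_N\subset\calF$, Parseval gives
\[
\left\|u-\sum_{\bsnu\in\Lambda_N}u_\bsnu H_\bsnu\right\|_{L^2_\gamma(\R^\infty,X)}^2
= \sum_{\bsnu\in\calF\setminus\Lambda_N}\norm[X]{u_\bsnu}^2
= \sum_{i>N}\norm[X]{u_{\bsnu_i}}^2,
\]
where $(\bsnu_i)_{i\in\N}$ is the enumeration from \eqref{eq:LambdaN}, chosen so that $(c_{\bsnu_i})_{i\in\N}$ is nondecreasing and $\Lambda_N=\{\bsnu_i:i\le N\}$.

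Next I would insert the weights $c_{\bsnu_i}$ and exploit the monotonicity of the enumeration. Since $(c_{\bsnu_i})_i$ is nondecreasing, $(c_{\bsnu_i}^{-1})_i$ is nonincreasing, so $c_{\bsnu_i}^{-1}\le c_{\bsnu_{N+1}}^{-1}$ for every $i>N$, whence
\[
\sum_{i>N}\norm[X]{u_{\bsnu_i}}^2
= \sum_{i>N}\big(c_{\bsnu_i}\norm[X]{u_{\bsnu_i}}^2\big)\,c_{\bsnu_i}^{-1}
\le c_{\bsnu_{N+1}}^{-1}\sum_{\bsnu\in\calF}c_\bsnu\norm[X]{u_\bsnu}^2.
\]
The sum on the right is finite: by \eqref{eq:cnu-betanu}, $c_\bsnu\le C_\beta^{-1}\beta_\bsnu(M,\bsvarrho)$, and since the hypotheses of Theorem~\ref{thm:bdXSum} hold here (the choice \eqref{eq:varrho_j} of $\bsvarrho$ and $M\ge 2/p$), one gets $\sum_{\bsnu\in\calF}c_\bsnu\norm[X]{u_\bsnu}^2\le C_\beta^{-1}\sum_{\bsnu\in\calF}\beta_\bsnu(M,\bsvarrho)\norm[X]{u_\bsnu}^2\le 2\delta^2 C_\beta^{-1}$.

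It then remains to quantify the decay of $c_{\bsnu_{N+1}}^{-1}$. From \eqref{eq:varrho_j} one has $(\varrho_j^{-1})_{j\in\N}\in\ell^{q}(\N)$ with $q=2p/(2-p)$, and $M>2/p$ forces $q\ge p>2/M$, so Lemma~\ref{lemma:cnu} applies and yields $(c_\bsnu^{-1})_{\bsnu\in\calF}\in\ell^{t}(\calF)$ with $t=q/2=p/(2-p)$. Because $(c_{\bsnu_i}^{-1})_i$ is the nonincreasing rearrangement of this $\ell^t$-sequence, the elementary estimate $i\,\big(c_{\bsnu_i}^{-1}\big)^{t}\le\sum_{k\le i}\big(c_{\bsnu_k}^{-1}\big)^{t}\le\|(c_\bsnu^{-1})_{\bsnu\in\calF}\|_{\ell^t(\calF)}^{t}$ gives $c_{\bsnu_{N+1}}^{-1}\le C\,(N+1)^{-1/t}=C\,(N+1)^{-(2-p)/p}$. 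Combining the three steps, the squared error is $\calO\big(N^{-(2-p)/p}\big)=\calO\big(N^{1-2/p}\big)$, and taking square roots yields the asserted rate $\calO(N^{-1/p+1/2})$.

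I expect the argument to be essentially routine once the weighted bound is available; the only delicate point is the exponent bookkeeping, namely confirming that the summability exponent $q=2p/(2-p)$ of $(\varrho_j^{-1})$ indeed exceeds $2/M$ so that Lemma~\ref{lemma:cnu} is applicable, and that the resulting $\ell^{p/(2-p)}$-membership of $(c_\bsnu^{-1})$ converts, through the nonincreasing-rearrangement estimate, into precisely the exponent $-(2-p)/p$ matching the target rate. Beyond this accounting I anticipate no genuine obstacle.
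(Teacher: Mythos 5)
Your proposal is correct and follows essentially the same route as the paper's proof: Parseval, insertion of the weights $c_\bsnu$, finiteness of $\sum_\bsnu c_\bsnu\|u_\bsnu\|_X^2$ via Theorem~\ref{thm:bdXSum} together with \eqref{eq:cnu-betanu}, and a Stechkin-type rearrangement bound on $c_{\bsnu_{N+1}}^{-1}$ using the $\ell^{p/(2-p)}$-summability from Lemma~\ref{lemma:cnu}. The only (welcome) difference is that you spell out the Stechkin estimate and verify explicitly that $2p/(2-p)>2/M$, which the paper leaves implicit.
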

\begin{proof}
  Since $\bsb\in\ell^p$, the definition of $\varrho_j$ in \eqref{eq:varrho_j}
  implies $\varrho_j^{-1}\sim b_j^{(2-p)/2}$ so that
  $(\varrho_j^{-1})_{j\in\N}\in\ell^{\tilde q}(\N)$ with $\tilde q=\frac{2p}{2-p}$.
  According to Lemma \ref{lemma:cnu} we thus have with $q=\frac{p}{2-p}$
  \begin{equation}\label{eq:cbsnuellq}
    (c_\bsnu^{-1})_{\bsnu\in\calF}\in\ell^{q}(\calF).
  \end{equation}

  Using Parseval's identity and the fact that
  $u(\bsy) = \sum_{\bsnu\in\calF}u_\bsnu H_\bsnu(\bsy)$ converges in $L^2_\gamma(\R^\infty,X)$,
  we find
  \begin{align*}
    \left\|u(\bsy)-\sum_{\bsnu\in\Lambda_N}u_\bsnu H_\bsnu(\bsy)\right\|_{L^2_\gamma(\R^\infty,X)}^2
    &= \sum_{\bsnu\in\calF\backslash\Lambda_N}\norm[X]{u_\bsnu}^2 (c_\bsnu c_\bsnu^{-1})\\
    &\le \left(\max_{\bsmu\in\calF\backslash\Lambda_N} c_\bsmu^{-1}  \right)\sum_{\bsnu\in\calF}\norm[X]{u_\bsnu}^2 c_\bsnu.
  \end{align*}
  The last sum is finite by the inequalities in \eqref{eq:general} and
  \eqref{eq:cnu-betanu}.
  Due to \eqref{eq:cbsnuellq} and the fact that
  $\max_{\bsxi\in\calF\backslash\Lambda_N} c_\bsxi^{-1}$ is the $N$th
  largest element of $\set{c_\bsnu^{-1}}{\bsnu\in\calF}$ by definition
  of $\Lambda_N$, Stechkin's lemma yields the bound
  \begin{equation*}
    \max_{\bsxi\in\calF\backslash\Lambda_N} c_\bsxi^{-1}\le N^{-1/q}\norm[\ell^q]{(c_\bsxi^{-1})_{\bsxi\in\calF}}.
  \end{equation*}
  Since $\frac{1}{q}=\frac{2}{p}-1$,
  this proves the claim.
\end{proof}

Working with the weight sequence $(c_\bsnu)_{\bsnu\in\calF}$ instead
of directly with the $(\beta_\bsnu)_{\bsnu\in\calF}$ has the advantage
that its simple structure allows for the efficient computation of
$\Lambda_N$, which on top is guaranteed to be a downward closed
set. Indeed, different algorithms of (essentially) linear complexity
determining such sets $\Lambda_N$ can be found in
\cite{MR2566594,JZdiss,Westermann2023Transport}.

Finally, for future reference, we give a structural result on the
multiindex sets in \eqref{eq:LambdaN} regarding the growth of the effective
intrinsic dimensionality; specifically, we are interested in the
behaviour of
\begin{equation*}
  m(\Lambda_N):=\max_{\bsnu\in\Lambda_N}\|\bsnu\|_1\qquad\text{and}\qquad
  d(\Lambda_N):=\max_{\bsnu\in\Lambda_N}\|\bsnu\|_0.
\end{equation*}
The following lemma bounds the growth of these terms as $N\to\infty$.
It thus characterizes a form of sparsity of these index sets.
We recap the argument here for convenience of the reader,
and
mention that a similar bound (formulated slightly differently)
was shown in \cite[Lemma 5.5]{SZ21_982}.
\begin{lemma}
Let $C>0$, $0<r_0<r_1<\infty$ and
let $c_\bsnu$ be defined as in \eqref{eq:cbsnu}.
Let furthermore
  \begin{enumerate}
  \item $(\varrho_j)_{j\in\N}\subset (0,\infty)$
        in the definition of $c_\bsnu$ be monotonically
        increasing and $C^{-1} j^{r_0}\le \varrho_j\le C j^{r_1}$ for all $j\in\N$,
    \item $(\bsnu_j)_{j\in\N}$ be any enumeration of $\calF$ such that
      the sequence $(c_{\bsnu_j})_{j\in\N}$ in \eqref{eq:cbsnu} is monotonically increasing,
    \item $\Lambda_N$ be as in \eqref{eq:LambdaN}.
\end{enumerate}
Then, as $N\to\infty$,
  \[
    m(\Lambda_N) =O(N^{2r_1/M}), \qquad
    d(\Lambda_N) = o(\log(N)) \;.
  \]
\end{lemma}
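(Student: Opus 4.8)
The plan is to reduce both quantities to a single threshold and then analyze the shape of the admissible multiindices below that threshold. Write $a_j := \max\{1,K\varrho_j\}^2 \ge 1$, so that $c_\bsnu = \prod_{j\in\supp\bsnu} a_j\,\nu_j^M$, and set $T_N := \max_{\bsnu\in\Lambda_N} c_\bsnu = c_{\bsnu_N}$ (using the monotone enumeration in hypothesis (ii)). Every $\bsnu\in\Lambda_N$ then satisfies $c_\bsnu\le T_N$, so it suffices to bound $\|\bsnu\|_1$ and $\|\bsnu\|_0$ uniformly over the sublevel set $\set{\bsnu\in\calF}{c_\bsnu\le T_N}$, and separately to control the growth of $T_N$ in terms of $N$. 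These three estimates are developed independently and then combined.

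First I would bound the support size. Since $(\varrho_j)_j$ is increasing, $(a_j)_j$ is nondecreasing, so for any $\bsnu$ with $|\supp\bsnu|=d$ we have $c_\bsnu\ge\prod_{j\in\supp\bsnu}a_j\ge\prod_{j=1}^d a_j=:P_d$, the product over the $d$ cheapest coordinates. Using $a_j\ge (KC^{-1})^2 j^{2r_0}$ for $j$ large together with Stirling's estimate $\sum_{j=1}^d\log j=\log(d!)\ge d\log d-d$ gives $\log P_d\ge 2r_0\, d\log d - O(d)$. Hence $c_\bsnu\le T$ forces $d\log d=O(\log T)$, which upon inversion yields $d=O(\log T/\log\log T)$. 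For the $\ell^1$-norm I would invoke the elementary inequality $\sum_{i=1}^d x_i\le (d-1)+\prod_{i=1}^d x_i$ for integers $x_i\ge1$ (immediate induction from $(x-1)(y-1)\ge0$), applied to $x_i=\nu_j$, together with $\prod_{j\in\supp\bsnu}\nu_j=\big(\prod\nu_j^M\big)^{1/M}\le c_\bsnu^{1/M}\le T^{1/M}$ (as $a_j\ge1$). Combined with the support bound this gives $\|\bsnu\|_1\le T^{1/M}+O(\log T/\log\log T)\le C'T^{1/M}$ for $T$ large. Evaluated at $T=T_N$ these read $d(\Lambda_N)=O(\log T_N/\log\log T_N)$ and $m(\Lambda_N)\le C'T_N^{1/M}$.

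It remains to show $T_N=O(N^{2r_1})$, which is where the upper bound $\varrho_j\le Cj^{r_1}$ enters and is the crux of the argument. The idea is to exhibit many multiindices of small $c_\bsnu$: for the single-coordinate indices $\bsnu=e_j$ one has $c_{e_j}=a_j\le C_1 j^{2r_1}$, so $c_{e_j}<T$ whenever $j<(T/C_1)^{1/(2r_1)}$, whence $\#\set{\bsnu\in\calF}{c_\bsnu<T}\gtrsim T^{1/(2r_1)}$. On the other hand, because the enumeration is monotone in $c_\bsnu$, every $\bsnu$ with $c_\bsnu<T_N=c_{\bsnu_N}$ precedes $\bsnu_N$, so $\#\set{\bsnu}{c_\bsnu<T_N}\le N-1<N$. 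Comparing the two estimates yields $T_N^{1/(2r_1)}\lesssim N$, i.e. $T_N=O(N^{2r_1})$. Substituting this into the bounds of the previous paragraph gives $m(\Lambda_N)\le C'(C_2 N^{2r_1})^{1/M}=O(N^{2r_1/M})$ and, using $\log T_N=O(\log N)$ and $\log\log T_N\sim\log\log N$, $d(\Lambda_N)=O(\log N/\log\log N)=o(\log N)$, as claimed. The main obstacle is the support estimate: one must extract the superlinear growth $\log P_d\gtrsim d\log d$ from the mild lower bound $\varrho_j\gtrsim j^{r_0}$ and then invert $d\log d\lesssim\log T$ carefully to land the $o(\log N)$ rate; by comparison the $\ell^1$-estimate and the counting argument for $T_N$ are routine.
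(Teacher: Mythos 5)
Your proposal is correct, and its overall skeleton coincides with the paper's: both arguments reduce everything to the threshold $T_N=c_{\bsnu_N}$, bound $T_N=O(N^{2r_1})$ by playing the $N$ unit indices $\bse_1,\dots,\bse_N$ (whose $c$-values are at most a constant times $N^{2r_1}$, by monotonicity of $\varrho_j$ and the upper bound $\varrho_j\le Cj^{r_1}$) against the monotone enumeration, and bound the support size via the superexponential growth $\prod_{j=1}^d\max\{1,K\varrho_j\}^2\gtrsim (d!)^{2r_0}$ of the product over the $d$ cheapest coordinates. Where you genuinely depart is the bound on $m(\Lambda_N)$: the paper analyzes the increments $c_{\bsnu+\bse_j}/c_\bsnu$, shows that a minimizer of $c_\bsnu$ subject to $\|\bsnu\|_1=m$ must have the form $(m-k_0,1,\dots,1,0,\dots)$ with $k_0<d_0$ bounded, and deduces $\min_{\|\bsnu\|_1\ge m}c_\bsnu\ge\max\{1,m-d_0\}^M$, which gives $m(\Lambda_N)\le O(N^{2r_1/M})+d_0$ directly and independently of the support estimate. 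You instead combine the elementary inequality $\sum_{i}x_i\le (d-1)+\prod_i x_i$ for integers $x_i\ge 1$ with $\prod_{j\in\supp\bsnu}\nu_j^M\le c_\bsnu$ (valid because $\max\{1,K\varrho_j\}^2\ge 1$), obtaining $\|\bsnu\|_1\le\|\bsnu\|_0+c_\bsnu^{1/M}$; this couples the $\ell^1$ bound to the support bound, which is harmless since $\log N/\log\log N=O(N^{2r_1/M})$. Your route buys simplicity (no structural characterization of minimizers, no case analysis with $d_0$) and, as a by-product of explicitly inverting $d\log d\lesssim\log T_N$, the quantitatively sharper estimate $d(\Lambda_N)=O(\log N/\log\log N)$, which implies the stated $o(\log N)$; the paper's route buys an $\ell^1$ bound that stands on its own, with explicit constants and the precise shape of the extremal multiindices.
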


\begin{proof}
  We start with $d(\Lambda_N)$. By definition of $c_\bsnu$ in \eqref{eq:cbsnu},
  and due to the $\varrho_j$ being monotonically increasing, the minimum
  value of $c_\bsnu$ for any $\bsnu$ with $\|\bsnu\|_0\ge d$ is clearly attained
  at $\bsnu=(1,\dots,1,0,0,\dots)$, where the first $d$ entries equal one. Hence
  \begin{equation*}
    \min_{\|\bsnu\|_0\ge d}c_\bsnu \ge \prod_{j=1}^d\max\{1,K\rho_j\}^2
    \ge \prod_{j=1}^d(CK j^{r_0})^2
    \ge (CK)^{2d} (d!)^{2r_0}
  \end{equation*}
  which increases faster than exponential in $d$. On the other hand,
  using that $\varrho_j$ is monotonically increasing and
  $\varrho_j\le C j^{r_1}$, by \eqref{eq:cbsnu}
  \begin{equation}\label{eq:cbsnuN}
    c_{\bsnu_N}\le \max\{1,K\varrho_N\}^2
    \le \max\{1,CK N^{r_1}\}^2
    \le \max\{1,CK\}^2  N^{2 r_1}
  \end{equation}
  which grows algebraically in $N$. Together this gives that
  \begin{equation*}
    (CK)^{2d(\Lambda_N)}(d(\Lambda_N)!)^{2r_0}\le \max\{1,CK\}^2 N^{2 r_1}
  \end{equation*}
  and therefore necessarily
  \begin{equation*}
    d(\Lambda_N)=o(\log(N))\qquad\text{as }N\to\infty.
  \end{equation*}

  For $m(\Lambda_N)$ we proceed similarly. First, given a multi index $\bsnu$,
  we observe that for the $j$th unit index $\bse_j=(\delta_{ij})_{i\in\N}$,
  $c_{\bsnu+\bse_j}$ increases with respect to $c_\bsnu$ by the factor
  \begin{equation*}
    \frac{c_{\bsnu+\bse_j}}{c_\bsnu} =\begin{cases}
      \max\{1,K\rho_j\}^{2}&\text{if }\nu_j=0\\
      \frac{(\nu_j+1)^M}{\nu_j^M}&\text{if }\nu_j>0.
    \end{cases}
  \end{equation*}
  Let $d_0$ be so large that $\max\{1,K\varrho_{d_0}\} = K\varrho_{d_0}\ge
  \max_{n\in\N}\frac{(n+1)^M}{n^M}=2^M$.
  Since $\frac{(n+1)^M}{n^M}$ is decreasing in $n$, one multiindex $\bsnu$ minimizing $c_\bsnu$
  under the constraint $\|\bsnu\|_1=m$ must be of the type
  \begin{equation*}
    \bsnu=(m-k_0,\underbrace{1,\dots,1}_{k_0\text{ times}},0,0,\dots)
  \end{equation*}
  for some $k_0\le d_0-1$. We conclude that
  \begin{equation*}
    \min_{\|\bsnu\|_1\ge m}c_\bsnu \ge \max\{1,K\rho_1\}^2(m-k_0)^M
    \prod_{j=2}^{k_0+1}\max\{1,K\rho_j\}^2
    \ge \max\{1,m-d_0\}^M.
  \end{equation*}
  Using this together with \eqref{eq:cbsnuN} we have
  \begin{equation*}
    \max\{1,m(\Lambda_N)-d_0\}^M \le \max\{1,CK\}^2 N^{2r_1}
  \end{equation*}
  which gives
  \begin{equation*}
    m(\Lambda_N)
    \le \max\{1,CK\}^{2/M} N^{2r_1/M}+d_0 =O(N^{2r_1/M})
  \end{equation*}
  for $N\to\infty$ as claimed.
\end{proof}

\section{Parametric holomorphy for operators between Hilbert spaces}
\label{sec:ParHolOp}
Let $\calX$ and $\calY$ be real-valued, separable Hilbert
spaces.
Let $(\psi_j)_{j\in\N}$ be a Riesz
basis for $\calX$: there exists
a constant $\Cpsi\geq 1$ such that for every $\bsx \in \ell^2(\N)$
\begin{equation}\label{eq:RszBas}
\frac{1}{\Cpsi^2} \| \bsx \|^2_{\ell^2(\N)} \leq \| \sum_{j\in\N}x_j\psi_j \|^2_{\calX} \leq \Cpsi^2 \| \bsx \|^2_{\ell^2(\N)},
\end{equation}
We write
\begin{equation*}
  \calS_\theta^\calX := \set{x+\ii y}{x,y\in\calX,~\norm[\calX]{y}<\theta} \subset \calX_\bbC
\end{equation*}
to denote a complex ``strip'' of width $\theta>0$
in the (canonical, see, e.g., \cite{Michal1940,MujiCplxAnBSpc,padraig,Munoz1999} or \cite[Sec.~1.2.4]{JZdiss})
complexification $\calX_\bbC$ of $\calX$.

In this section we will consider maps $\calG:\calX_{\bbC}\to\calY_{\bbC}$
that satisfy the following Assumption \ref{ass:G}.
We will show that this assumption implies
holomorphy of countably parametric functions in the sense of Definition~\ref{def:bdXHol}
which are related to the map $\calG$ as in \eqref{eq:uy-def} below.
\begin{assumption}\label{ass:G}
  There exist $\theta > 0$
  such that $\calG:\calS_\theta^\calX\to \calY_{\bbC}$
  is holomorphic and, for some $C, \alpha>0$, and for some $\tau\in [0,2)$,
  it holds that
  \begin{enumerate}
  \item \label{item:G-bound}$\norm[\calY_{\bbC}]{\calG(a)}\le C\exp(\alpha\norm[\calX_{\bbC}]{a}^\tau)$,
  \item \label{item:G-cont}$\norm[\calY_{\bbC}]{\calG(a)-\calG(b)}\le C \exp(\alpha(\norm[\calX_{\bbC}]{a}^\tau+\norm[\calX_{\bbC}]{b}^\tau))\norm[\calX_{\bbC}]{a-b}$
  \end{enumerate}
  for all $a$, $b\in\calS_\theta^\calX$.
\end{assumption}
\begin{remark}
  Item \ref{item:G-cont} of Assumption \ref{ass:G} implies that there exist
  $\widetilde{C}$ and $\widetilde{\alpha}$ such that for all $a\in\calS_\theta^\calX$
  \begin{equation*}
    \norm[\calY_{\bbC}]{\calG(a)}\le \widetilde{C}\exp(\widetilde{\alpha}\norm[\calX_{\bbC}]{a}^\tau),
  \end{equation*}
  i.e., item
  \ref{item:G-bound} of the same assumption, with modified constants. We list
  the two conditions anyway for ease of notation in the rest of the manuscript.
\end{remark}
\begin{remark}
  \label{remark:holomorphy-embedding}
  To satisfy Assumption \ref{ass:G}, it is sufficient that the
  operator $\calG$ is holomorphic $\calS^\calX_\theta \to Z$, where
  $Z\hookrightarrow \calY_\bbC$ is a complex Hilbert space continuously embedded
  in $\calY_\bbC$. Indeed,
 $\calG$ is continuous from $\calS^\calX_\theta$
 to $\calY_\bbC$ by item \ref{item:G-cont}.
 It follows then from \cite[Proposition 1]{HSZ22_1015} that
$\calG : \calS^\calX_\theta\to \calY_\bbC$
is holomorphic.
\end{remark}
Assumption \ref{ass:G} is a sufficient condition for parametric $(\bsb, \xi, 2,
\delta, \calY_{\bbC})$ holomorphy in the sense of Definition~\ref{def:bdXHol},
as formulated in the next statement.
\begin{proposition}\label{prop:holomorphy}
Suppose that $\calG$ satisfies Assumption \ref{ass:G} and let $\bsb\in\ell^2(\N)$ be a sequence of positive reals. Then,
\begin{equation}
  \label{eq:uy-def}
  u(\bsy):=\calG\Big(\sum_{j\in\N}y_jb_j\psi_j\Big)
\end{equation}
is $(\bsb,\theta/\Cpsi, 2 , \delta, \calY_\bbC)$-holomorphic
for some $\delta=\delta(\alpha,\bsb,C_\psi,C_\tau,\tau)$.
\end{proposition}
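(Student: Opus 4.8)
The plan is to verify the three defining properties \ref{item:hol}--\ref{item:vN} of $(\bsb,\theta/\Cpsi,2,\delta,\calY_\bbC)$-holomorphy in Definition~\ref{def:bdXHol}, with $\xi=\theta/\Cpsi$ and $r=2$, for the truncated maps
\[
  u_N(y_1,\dots,y_N):=\calG\Big(\sum_{j=1}^N y_j b_j\psi_j\Big).
\]
The consistency relation \eqref{eq:un=um} is immediate, since appending zero entries does not alter the finite sum.

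For item \ref{item:hol} I would extend $u_N$ to $\bsz\in\cS_\bsvarrho^\C$ by $u_N(\bsz)=\calG(\sum_{j=1}^N z_j b_j\psi_j)$. The map $\bsz\mapsto\sum_{j=1}^N z_j b_j\psi_j$ is $\C$-linear, hence entire, from $\C^N$ into $\calX_\bbC$, so holomorphy of the composition follows once one checks that it maps $\cS_\bsvarrho^\C$ into the strip $\calS_\theta^\calX$ on which $\calG$ is holomorphic. Writing $\bsz=\bsx+\ii\bsy$ with $|y_j|<\varrho_j$, the imaginary part of the argument is $\sum_j y_j b_j\psi_j$, and the Riesz bound \eqref{eq:RszBas} (extended to $\calX_\bbC$ with the same constant $\Cpsi$) together with $(\bsb,\xi,2)$-admissibility $\sum_j b_j^2\varrho_j^2\le\xi^2$ gives
\[
  \Big\|\sum_{j=1}^N y_j b_j\psi_j\Big\|_\calX\le \Cpsi\Big(\sum_{j} b_j^2\varrho_j^2\Big)^{1/2}\le \Cpsi\,\xi=\theta.
\]
This is exactly where the choice $r=2$ is essential: the $\ell^2$ admissibility constraint pairs with the $\ell^2$ Riesz bound.

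For item \ref{item:varphi}, for $\bsz$ in $\calB(\bsvarrho)$ one has $\sum_j|z_j|^2 b_j^2\le\xi^2$, so by \eqref{eq:RszBas} and Assumption~\ref{ass:G}\ref{item:G-bound},
\[
  \norm[\calY_\bbC]{u_N(\bsy+\bsz)}\le C\exp\!\Big(\alpha\big(\Cpsi\,R_N(\bsy)+\theta\big)^\tau\Big)=:\varphi_N(\bsy),\qquad R_N(\bsy):=\Big(\sum_{j=1}^N b_j^2 y_j^2\Big)^{1/2},
\]
which is independent of $\bsvarrho$ and $\bsz$. The crux is the uniform-in-$N$ bound $\|\varphi_N\|_{L^2_{\gamma_N}}\le\delta$. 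Using $\tau<2$ and Young's inequality, for any $\epsilon>0$ there is $C_\epsilon$ with $2\alpha(\Cpsi R_N+\theta)^\tau\le \epsilon R_N^2+C_\epsilon$, so that $\varphi_N(\bsy)^2\le C^2 e^{C_\epsilon}\exp(\epsilon\sum_{j=1}^N b_j^2 y_j^2)$. The Gaussian integral then factorizes,
\[
  \int_{\R^N}\exp\Big(\epsilon\sum_{j=1}^N b_j^2 y_j^2\Big)\gamma_N(d\bsy)=\prod_{j=1}^N\big(1-2\epsilon b_j^2\big)^{-1/2},
\]
and this product stays bounded as $N\to\infty$ precisely because $\bsb\in\ell^2(\N)$, once $\epsilon$ is chosen with $2\epsilon\sup_j b_j^2<1$. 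This yields a finite $\delta$ depending only on $\alpha,\bsb,\Cpsi,C,\tau$ (and $\theta$). I expect this Gaussian-moment estimate to be the main obstacle, as it is where both hypotheses $\tau<2$ and $\bsb\in\ell^2$ are indispensable.

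For item \ref{item:vN}, I would first note $\EE_\gamma[\sum_j b_j^2 y_j^2]=\norm[\ell^2]{\bsb}^2<\infty$, so $\sum_j y_j b_j\psi_j$ converges in $\calX$ for $\gamma$-a.e.\ $\bsy$ and $u$ in \eqref{eq:uy-def} is well defined a.e. Applying Assumption~\ref{ass:G}\ref{item:G-cont} with $a=\sum_{j\in\N}y_jb_j\psi_j$, $b=\sum_{j=1}^N y_jb_j\psi_j$, and bounding $\|a\|,\|b\|\le \Cpsi R(\bsy)$ with $R(\bsy)=(\sum_{j\in\N}b_j^2 y_j^2)^{1/2}$, gives
\[
  \norm[\calY_\bbC]{u(\bsy)-\tilde u_N(\bsy)}^2\le C^2\Cpsi^2\exp\!\big(4\alpha(\Cpsi R(\bsy))^\tau\big)\sum_{j>N}b_j^2 y_j^2.
\]
The right-hand side is dominated by the $N$-independent, $\gamma$-integrable function $C^2\Cpsi^2\exp(4\alpha(\Cpsi R)^\tau)R^2$ (integrable by the same Young/product argument as above) and tends to $0$ pointwise a.e., since $\sum_{j>N}b_j^2y_j^2$ is the tail of an a.s.\ convergent series. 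Dominated convergence then yields $\|u-\tilde u_N\|_{L^2_\gamma(\R^\infty,\calY_\bbC)}\to 0$, which also certifies $u\in L^2_\gamma(\R^\infty,\calY_\bbC)$ and completes the verification.
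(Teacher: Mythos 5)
Your proposal is correct and follows essentially the same route as the paper: the same holomorphic extension and strip-mapping argument for item (i), the same exponential bound reduced to a Gaussian product via the comparison $e^{\alpha x^{\tau/2}}\le C e^{\tilde\alpha x}$ (your Young's-inequality step is exactly the paper's Lemma \ref{lemma:expL2}, which you inline), and the same use of Assumption \ref{ass:G}\ref{item:G-cont} for item (iii). The only cosmetic differences are that you conclude item (iii) by dominated convergence where the paper evaluates the Gaussian integral explicitly and lets the $\ell^2$ tail $\sum_{j>N}b_j^2$ vanish; both are valid.
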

\begin{proof}
 By Assumption \ref{ass:G},
 $\calG$ is holomorphic from $\calS_\theta^{\calX}$ to $\calY_{\bbC}$.

We denote for arbitrary, fixed $N\in \N$ and for $y_j\in \R$
\begin{equation*}
  u_N(y_1, \dots, y_N)
  =
  \calG\Big(\sum_{j=1}^N y_jb_j\psi_j\Big), \qquad \tilde u_N(\bsy) = u_N(y_1, \dots, y_N).
\end{equation*}
\textbf{Verification of item \ref{item:hol} of Definition \ref{def:bdXHol}.}
Let
$\bsvarrho\in (0, \infty)^N$ be $(\bsb, \theta/\Cpsi, 2)$ admissible
and consider the extension
\begin{equation*}
  u_N(\bsz) = \calG\Big(\sum_{j=1}^N z_j b_j \psi_j\Big).
\end{equation*}
We have, for all $\bsz \in \calS^{\C}_{\bsvarrho}$,
\begin{equation*}
  \| \sum_{j=1}^N\Im(z_j)b_j\psi_j \|^2_{\calX}
  \leq \Cpsi^2 \sum_{j=1}^N |\Im(z_j)|^2b_j^2
  \leq \Cpsi^2 \sum_{j=1}^N \varrho_j^2b_j^2
  \leq \theta^2.
\end{equation*}
Hence, for all $\bsz \in \calS^{\C}_{\bsvarrho}$,
\begin{equation*}
  \sum_{j=1}^N z_jb_j\psi_j\in \calS^\calX_\theta
\end{equation*}
and $u_N$ is holomorphic from $\calS^{\C}_{\bsvarrho}$ to $\calY_{\bbC}$.
\\
\textbf{Verification of item \ref{item:varphi} of Definition \ref{def:bdXHol}.}
For $\tau \in [0,2)$ as in Assumption \ref{ass:G},
let
$C_\tau$ be a constant such that
$(x+y)^{\tau/2}\leq C_\tau(x^{\tau/2} + y^{\tau/2})$ for all $x, y\in \R_+$.
For every $\bsvarrho\in (0, \infty)^N$ which is $(\bsb, \theta/\Cpsi, 2)$
admissible, for all $\bsy\in \R^N$, and for all $\bsz\in \calB(\bsvarrho)$, we estimate
\begin{align*}
  \| u(\bsy+\bsz) \|_{\calY_{\bbC}}
  &\leq C\exp \left(\alpha \| \sum_{j=1}^N(y_j+z_j)b_j\psi_j \|^\tau_{\calX_{\bbC}} \right) \\
  & \leq C\exp\left( \alpha \Cpsi^\tau\left[ \sum_{j=1}^N (y_j + \Re(z_j))^2 b_j^2 + \sum_{j=1}^N \Im(z_j)^2 b_j^2 \right]^{\tau/2} \right)\\
  & \leq C\exp\left( \alpha \Cpsi^\tau\left[ 2\sum_{j=1}^N y_j^2 b_j^2 + \sum_{j=1}^N(2\Re(z_j)^2 + \Im(z_j)^2)b_j^2\right]^{\tau/2} \right)\\
  & \leq C\exp\left( \alpha \Cpsi^\tau\left[ 2\sum_{j=1}^N y_j^2 b_j^2 + 2\theta^2\Cpsi^{-2}\right]^{\tau/2} \right)\\
  & \leq Ce^{2^{\tau/2}\alpha C_\tau \theta^\tau}\exp\left( 2^{\tau/2}C_\tau \alpha \Cpsi^\tau\left[ \sum_{j=1}^N y_j^2 b_j^2 \right]^{\tau/2} \right) \eqqcolon \varphi_N(\bsy).
\end{align*}
By Lemma \ref{lemma:expL2},
$\varphi_N$ is in $L^2_{\gamma_N}(\R^N)$ with a bound $\delta=\delta(\alpha,\bsb,C_\psi,C_\tau,\tau)$ on the norm
that is independent of $N$.

\noindent\textbf{Verification of item \ref{item:vN} of Definition \ref{def:bdXHol}.}
Given the sequence $\bsb$, let $\tilde{\alpha}(\bsb) > 0$ be so that
$\tilde{\alpha} \; \sup_{j\in \N}b_j ^2 < 1/2$ %
and let
$C_{\alpha, \tau}>0$ be a constant such that
\begin{equation*}
e^{4\alpha \Cpsi^\tau x^{\tau/2}} \leq C_{\alpha, \tau} e^{\tilde{\alpha} x}, \qquad \forall x\in\R_+.
\end{equation*}
We start by showing that $u\in L_\gamma^2(\R^\infty, \calY)$:
\begin{align*}
   \int_{\R^\infty} \| u(\bsy) \|_{\calY_{\bbC}}^2 \gamma(d\bsy)
   &
     \leq \int_{\R^\infty} C^2 \exp\Big(2\alpha \| \sum_{j\in\N} y_j b_j \psi_j \|_{\calX_{\bbC}}^\tau\Big) \gamma(d\bsy)
   \\ &
        \leq C^2 \int_{\R^\infty} \exp\Big(2\alpha\Cpsi^\tau \Big( \sum_{j\in\N}b_j^2y_j^2 \Big)^{\tau/2}\Big) \gamma(d\bsy) < \infty,
 \end{align*}
where the final bound follows from Lemma \ref{lemma:expL2}.
We have
\begin{align*}
  \| u - \tilde u_N \|_{L_\gamma^2(\R^\infty, \calY_{\bbC})}^2
  &=  \int_{\R^\infty} \| u(\bsy) - \tilde u_N(\bsy) \|^2_{\calY_{\bbC}} \gamma(d\bsy)
  \\ &
       \leq C^2 \int_{\R^\infty} \exp\Big( 2\alpha \|\sum_{j\in\N} y_jb_j\psi_j\|_{\calX_{\bbC}}^\tau + 2\alpha\|\sum_{j=1}^N y_jb_j\psi_j\|_{\calX_\bbC}^\tau \Big) \| \sum_{j=N+1}^\infty y_jb_j\psi_j\|^2_{\calX_\bbC}\gamma(d\bsy)
  \\ &
\leq C^2 \Cpsi^2\int_{\R^\infty} \exp\Big(4\alpha\Cpsi^\tau \Big(\sum_{j\in\N}y_j^2b_j^2\Big)^{\tau/2}\Big)\sum_{j=N+1}^\infty b_j^2y_j^2 \gamma(d\bsy)
  \\ &
       \leq  C^2\Cpsi^2C_{\alpha, \tau} \int_{\R^\infty} \exp\Big(\tilde{\alpha} \sum_{j\in\N}y_j^2b_j^2\Big)\sum_{j=N+1}^\infty b_j^2y_j^2 \gamma(d\bsy)
  \\ &
       = C^2\Cpsi^2C_{\alpha, \tau} \left(\prod_{k\in\N}\left( \frac{1}{1-2\tilde{\alpha}b_k^2} \right)^{1/2}\right)\sum_{j=N+1}^\infty\frac{b_j^2}{1-2\tilde{\alpha}b_j^2}.
\end{align*}
The sum in the last term in the inequality converges to zero as $N\to \infty$, since
$\bsb\in \ell^2(\N)$.
It follows that $\tilde u_N \to u$ in $L_\gamma^2(\R^\infty, \calY_\bbC)$.
\end{proof}
\subsection{Application:
Coefficient-to-solution map for a linear elliptic PDE}
\label{sec:diffusion}
In this section, we will show that the coefficient-to-solution map of an
elliptic PDE  with lognormal coefficient satisfies Assumption \ref{ass:G} by
choosing $\calX$ and $\calY$ to be the appropriate Sobolev spaces.

We consider the extension to complex-valued spaces of the map
\begin{equation*}
\calG :
\begin{cases}
  L^\infty(\bbT^d) \to \{v\in H^1(\bbT^d): \int_{(0,1)^d} v = 0\}\\
  a\mapsto u
\end{cases},
\end{equation*}
where $u$ is the (weak)
solution to
\begin{equation}
  \label{eq:diffusion-torus}
  -\nabla\cdot(e^a\nabla u) = f \text{ in }\bbT^d = \R^d/\Z^d,
\end{equation}
for fixed $f\in H^{-1}(\bbT^d)$ with zero average in the unit cell.
We show that $\calG$ satisfies the assumptions of this paper, i.e.,
Assumption \ref{ass:G}.
We will consider the homogeneous norms on spaces of complex-valued functions
\begin{equation*}
  \| v \|^2_{\dotH^s(\bbT^d; \bbC)} = \sum_{k\in\Z^d} |k|^{2s} |\hat{v}_k|^2,
\end{equation*}
where $\hat{v}_k$ are the Fourier coefficients of $v$.
The proof is given in Section \ref{sec:diffusion-assumption-proof}.
\begin{proposition}
  \label{lemma:diffusion-assumption}
  Let $d\in \N$, $r>0$, $f\in H^{r-1}(\bbT^d; \bbR)$
  with\footnote{In the sense of distributions for $0<r<1$.}
  $\int_{(0,1)^d} f= 0$, $\calG : L^\infty(\bbT^d;
  \bbC)\to \{v \in H^1(\bbT^d; \bbC): \int_{(0,1)^d}v = 0\}$
  such that, for all $a\in L^\infty(\bbT^d; \bbR)$
  it holds in $H^{-1}(\bbT^d)$ that
  \begin{equation*}
    -\nabla\cdot(e^a\nabla\calG(a)) = f \;.
  \end{equation*}
Then,
$\calG$ satisfies Assumption \ref{ass:G} with $\calX = \dotH^{r+d/2}(\bbT^d;\bbR)$
and $\calY = \dotH^{r+1-\delta}(\bbT^d; \bbR)$, for any $0<\delta\leq r$.
\end{proposition}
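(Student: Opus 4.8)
The plan is to verify the three requirements of Assumption~\ref{ass:G} by factoring $\calG$ as the composition of the entire exponential map $a\mapsto e^a$ with the linear elliptic solution operator, and then to lift the resulting $H^1$-regularity to the target space $\calY=\dotH^{r+1-\delta}$ by an elliptic regularity bootstrap. Throughout I would use that, since $r+d/2>d/2$, the space $\dotH^{r+d/2}(\bbT^d;\bbC)$ is (on mean-zero functions, where it is norm-equivalent to the inhomogeneous $H^{r+d/2}$) a Banach algebra that embeds continuously into $L^\infty(\bbT^d;\bbC)$. Writing $C_A$ for the algebra constant, the power series for the exponential yields the bound $\|e^a\|_{H^{r+d/2}}\le C\exp(C_A\|a\|_{\calX_\bbC})$, and the identity $e^a-e^b=(a-b)\int_0^1 e^{b+t(a-b)}\dd t$ gives $\|e^a-e^b\|_{H^{r+d/2}}\le C\exp(C_A(\|a\|_{\calX_\bbC}+\|b\|_{\calX_\bbC}))\|a-b\|_{\calX_\bbC}$. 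These already identify the admissible exponent as $\tau=1\in[0,2)$.

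First I would fix the strip width $\theta$. For $a=x+\ii y\in\calS_\theta^\calX$ the embedding gives $\|y\|_{L^\infty}\le \Cpsi'\|y\|_\calX<\Cpsi'\theta$ (with $\Cpsi'$ the embedding constant), so choosing $\theta$ small enough that $\Cpsi'\theta<\pi/2$ forces the pointwise positivity $\Re(e^a)=e^{x}\cos(y)\ge \cos(\Cpsi'\theta)\,e^{-\|x\|_{L^\infty}}>0$. Hence the sesquilinear form $B_a(u,v)=\int_{\bbT^d}e^a\nabla u\cdot\overline{\nabla v}$ is coercive on mean-zero $H^1(\bbT^d;\bbC)$ with coercivity constant $\gtrsim e^{-\|x\|_{L^\infty}}$ and continuity constant $\le\|e^a\|_{L^\infty}\le e^{\|x\|_{L^\infty}}$. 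Lax--Milgram then gives well-posedness and the a priori bound $\|\calG(a)\|_{\dotH^1}\lesssim e^{\|x\|_{L^\infty}}\|f\|_{H^{-1}}\lesssim\exp(C\|a\|_{\calX_\bbC})\|f\|$, again with $\tau=1$. Since $a\mapsto e^a$ is holomorphic into $H^{r+d/2}_\bbC$, the coefficient-to-operator map $a\mapsto L_{e^a}\in\mathcal{L}(\dotH^1,\dotH^{-1})$ is holomorphic, and inversion is holomorphic on the (open) set of coercive coefficients; thus $\calG=\mathrm{solve}\circ\exp$ is holomorphic $\calS_\theta^\calX\to\dotH^1(\bbT^d;\bbC)$.

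The regularity upgrade is the technical core. I would rewrite the equation in the nondivergence form $\Delta u=-e^{-a}f-\nabla a\cdot\nabla u$, valid since $e^{-a}\nabla e^a=\nabla a$, and exploit that $\Delta$ is an isomorphism between mean-zero homogeneous Sobolev spaces. Bounding the right-hand side in $\dotH^{r-1-\delta}$ reduces to two Sobolev product estimates, $\|e^{-a}f\|_{H^{r-1-\delta}}\lesssim\|e^{-a}\|_{H^{r+d/2}}\|f\|_{H^{r-1}}$ and $\|\nabla a\cdot\nabla u\|_{H^{r-1-\delta}}\lesssim\|a\|_\calX\|u\|_{\dotH^{r+1-\delta}}$. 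The loss of $\delta$ derivatives is exactly what is needed to remain inside the range of validity of the multiplication inequality $H^{s_1}\cdot H^{s_2}\hookrightarrow H^{s_0}$ at the borderline $s_0=s_1+s_2-d/2$, which is otherwise false, and this is where the hypotheses $0<\delta\le r$ and $r>0$ enter. Because the a priori $H^1$ bound places $\nabla u$ at regularity $0$ there is no circularity: for $r\le 1$ a single application bounds $\|u\|_{\dotH^{r+1-\delta}}$ by $\|e^{-a}\|_{H^{r+d/2}}\|f\|+\|a\|_\calX\|u\|_{\dotH^1}$, and absorbing the polynomial factor $\|a\|_\calX$ into the exponential yields item~\ref{item:G-bound} with $\tau=1$; for $r>1$ the same step is iterated finitely often. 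Holomorphy into $\calY_\bbC$ then follows by combining the already-established holomorphy into the coarser space $\dotH^1(\bbT^d;\bbC)$, whose dual $\dotH^{-1}$ separates $\calY_\bbC$, with this local uniform bound in $\calY_\bbC$, via \cite[Proposition~1]{HSZ22_1015}, in the spirit of Remark~\ref{remark:holomorphy-embedding}.

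Finally, for item~\ref{item:G-cont} I would subtract the two equations: $u-w:=\calG(a)-\calG(b)$ solves $-\nabla\cdot(e^a\nabla(u-w))=\nabla\cdot((e^a-e^b)\nabla w)$, an elliptic problem with the coercive coefficient $e^a$ and a right-hand side controlled by the Lipschitz bound for $\exp$ and the a priori bound on $w=\calG(b)$. Running the same energy and regularity estimates on $u-w$ produces $\|\calG(a)-\calG(b)\|_{\calY_\bbC}\lesssim\exp(C(\|a\|_{\calX_\bbC}+\|b\|_{\calX_\bbC}))\|a-b\|_{\calX_\bbC}$, i.e.\ item~\ref{item:G-cont} with $\tau=1$. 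The main obstacle, and the step deserving the most care, is the borderline Sobolev multiplication estimate driving the bootstrap: one must check that the $\delta$-loss keeps the product inequalities valid uniformly across all $d\ge 1$ and $r>0$, and that every constant entering the iteration depends on $a$ only through $\exp(C\|a\|_{\calX_\bbC})$, so that the admissible growth exponent stays at $\tau=1<2$.
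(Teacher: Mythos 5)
Your overall skeleton matches the paper's proof in several places: the strip width is fixed exactly as in the paper via $\Re(e^{x+\ii y})=e^{x}\cos(y)>0$ for $\|y\|_{L^\infty}<\pi/2$, well-posedness and the $\dotH^1$ bound come from Lax--Milgram, holomorphy is first established into the coarse space $\dotH^1_\bbC$ and then upgraded to $\calY_\bbC$ via \cite[Proposition 1]{HSZ22_1015}, and the Lipschitz estimate is obtained by subtracting the two equations. The exponential bounds via the Banach-algebra structure of $H^{r+d/2}$ (instead of the paper's Faà di Bruno/H\"older route, Lemma \ref{lemma:Cka-exp}) are also fine and give $\tau=1$.

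The genuine gap is in the regularity upgrade, which is indeed the technical core. Your two product estimates are not valid in the full claimed range $d\in\N$, $r>0$. For $H^{s_1}\cdot H^{s_2}\to H^{s_0}$ one needs, besides $s_0\le\min(s_1,s_2)$ and $s_1+s_2-s_0\ge d/2$ (this is the condition your $\delta$-loss repairs), also $s_1+s_2\ge 0$ — otherwise the product of the two factors is not even a well-defined distribution, and no loss in the target exponent $s_0$ can help. For the term $\nabla a\cdot\nabla u$ in the first bootstrap step you have $\nabla a\in \dotH^{r+d/2-1}$ and only $\nabla u\in L^2$, so $s_1+s_2=r+d/2-1$, which is negative when $d=1$ and $r<1/2$; a negative-order distribution cannot be paired against a generic $L^2$ function. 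Similarly $e^{-a}f$ requires $s_1+s_2=2r+d/2-1\ge 0$, which fails for $d=1$, $r<1/4$. The check you defer at the end (``valid uniformly across all $d\ge 1$ and $r>0$'') would therefore fail, and since the proposition asserts the conclusion for every $d\in\N$ and $r>0$, the proof as written does not establish it. This is precisely why the paper does \emph{not} use the non-divergence rewrite in the fractional range: for $s\in(0,1)$ it proves the $\dotH^{1+s}$ bound by a real-interpolation ($K$-functional) argument between $\dotH^1$ and $\dotH^2$ (Proposition \ref{prop:Hs-poisson}, via Lemma \ref{lemma:H1diff-poisson}), in which the coefficient and data are first mollified to $b_\delta\in W^{1,\infty}$ and $f_\delta\in L^2$ so that only harmless $L^\infty\times L^2$ products ever appear; the non-divergence bootstrap with the rewrite $-\Delta u=e^{-a}(\nabla e^{a}\cdot\nabla u+f)$ is reserved for $s\ge 1$, where all Sobolev exponents entering products are nonnegative and the H\"older--Sobolev multiplication of Lemma \ref{lemma:CH-ineq} applies. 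To repair your argument you would need to replace the direct product estimates in the regime $r+d/2-1<\min(1-r,1/2)$ by such an approximation-and-interpolation step (or restrict the claim to $d\ge 2$, where your conditions do hold).
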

Since the solution operator $\calG$ satisfies Assumption \ref{ass:G} with the spaces given in the
statement above, we can apply the approximation results of Section
\ref{sec:hermite-appx} to a parametrized version of the operators, defined as in
\eqref{eq:uy-def}.
\begin{theorem}
  \label{thm:diffusion-holomorphy}
  Let $d\in \N$, $r>0$, $f\in
  \dotH^{r-1}(\bbT^d;\bbC)$ with $\int_{(0,1)^d} f= 0$, $\calG :
  L^\infty(\bbT^d;\bbC)\to \{v \in H^1(\bbT^d;\bbC): \int_{(0,1)^d}v = 0\}$ such
  that, for all $a\in L^\infty(\bbT^d; \bbR)$ it holds in $H^{-1}(\bbT^d)$ that
  \begin{equation*}
    -\nabla\cdot(e^a\nabla\calG(a)) = f \;.
  \end{equation*}
  Let $(\psi_j)_{j\in\N}$ be a Riesz basis for $\dotH^{r+d/2}(\bbT^d; \bbR^d)$,
  $\bsb\in (0,\infty)^\infty\cap \ell^2(\N)$,
  and let $\delta>0$ be arbitrary, fixed.

  Then,
  there exists $\xi>0$ such that the function
  \begin{equation}
    \label{eq:u-diff}
    u :
    \begin{cases}
      \R^\infty \to \{v \in H^1(\bbT^d; \bbR):
      \int_{(0,1)^d}v = 0\} \\
      \bsy \mapsto \calG(\sum_{j\in\N} b_j y_j\psi_j)
    \end{cases}
  \end{equation}
  can be extended to a $(\bsb, \xi, 2,\dotH^{r+1-\delta}(\bbT^d; \bbC)
  )$-holomorphic function.
  Furthermore, if $\bsb\in \ell^p(\bbN)$ for $p\in(0,2)$, then for all $N\in\bbN$
  there exists $\Lambda_N\subset\calF$ with $|\Lambda_N| = N$ such that
  \begin{equation*}
  \left\|u(\bsy)-\sum_{\bsnu\in\Lambda_N}u_\bsnu H_\bsnu(\bsy)\right\|_{L^2_\gamma(\R^\infty,\dotH^{r+1-\delta}(\bbT^d; \bbR))}
  =
  \calO(N^{-\frac{1}{p}+\frac{1}{2}})\qquad\text{as }N\to\infty,
  \end{equation*}
  with $u_\bsnu$ defined as in \eqref{eq:unu}.
\end{theorem}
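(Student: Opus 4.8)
The plan is to assemble the statement from three results already established in the paper: Proposition~\ref{lemma:diffusion-assumption} (the solution operator satisfies Assumption~\ref{ass:G}), Proposition~\ref{prop:holomorphy} (Assumption~\ref{ass:G} yields parametric $(\bsb,\xi,2,\cdot,\cdot)$-holomorphy), and Theorem~\ref{thm:ubsnubound} (the best $N$-term rate for such holomorphic functions). There is essentially no new analytic content; the work is in checking that the hypotheses align and in one bookkeeping issue about real versus complex target spaces.

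First I would apply Proposition~\ref{lemma:diffusion-assumption}. Its hypotheses---$d\in\N$, $r>0$, $f$ with vanishing cell average in $\dotH^{r-1}$, and $\calG$ the weak solution operator of \eqref{eq:diffusion-torus}---are exactly those assumed here, so $\calG$ satisfies Assumption~\ref{ass:G} with $\calX=\dotH^{r+d/2}(\bbT^d;\bbR)$ and $\calY=\dotH^{r+1-\delta}(\bbT^d;\bbR)$, producing constants $\theta,C,\alpha>0$ and an exponent $\tau\in[0,2)$. I would note only that, because $f$ has zero cell average, the distinction between $H^{r-1}$ and the homogeneous space $\dotH^{r-1}$ is immaterial. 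Next, since $(\psi_j)_{j\in\N}$ is a Riesz basis of $\calX=\dotH^{r+d/2}(\bbT^d;\bbR)$ with constant $\Cpsi$ and $\bsb\in(0,\infty)^\infty\cap\ell^2(\N)$, Proposition~\ref{prop:holomorphy} applies directly to $u(\bsy)=\calG(\sum_{j\in\N}b_jy_j\psi_j)$, furnishing a $(\bsb,\theta/\Cpsi,2,\delta',\dotH^{r+1-\delta}(\bbT^d;\bbC))$-holomorphic extension of $u$, where the holomorphy-bound parameter $\delta'=\delta'(\alpha,\bsb,\Cpsi,C_\tau,\tau)$ is finite. (I would flag the notational clash: this $\delta'$ is the last argument in Definition~\ref{def:bdXHol}, distinct from the fixed Sobolev loss $\delta$ appearing in the target space.) Setting $\xi:=\theta/\Cpsi>0$ establishes the first assertion.

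Finally, under the stronger summability $\bsb\in\ell^p(\N)$ with $p\in(0,2)$, I would invoke Theorem~\ref{thm:ubsnubound} with $X=\dotH^{r+1-\delta}(\bbT^d;\bbC)$ and any integer $M>2/p$, taking $\Lambda_N$ as the index set \eqref{eq:LambdaN} built from the monotone enumeration of the weights $c_\bsnu$ of \eqref{eq:cbsnu}. By construction $|\Lambda_N|=N$, and the theorem yields the rate $\calO(N^{-1/p+1/2})$ for the truncated Hermite expansion, measured in $L^2_\gamma(\R^\infty,\dotH^{r+1-\delta}(\bbT^d;\bbC))$.

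The only point requiring care---and the sole, minor, obstacle---is passing from this complex target norm to the real-valued norm in the statement. For real $\bsy$ the coefficient $\sum_{j\in\N}b_jy_j\psi_j$ is real, hence $u(\bsy)$ is the real PDE solution and each Hermite coefficient $u_\bsnu$ defined in \eqref{eq:unu} lies in $\dotH^{r+1-\delta}(\bbT^d;\bbR)$; moreover the $\calY_\bbC$-norm restricts to the $\calY$-norm on real elements. Consequently the truncation $\sum_{\bsnu\in\Lambda_N}u_\bsnu H_\bsnu$ is real-valued and the complex-space error bound coincides with the asserted $L^2_\gamma(\R^\infty,\dotH^{r+1-\delta}(\bbT^d;\bbR))$ bound, which completes the proof.
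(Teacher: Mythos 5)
Your proposal is correct and follows essentially the same route as the paper's proof: combine Proposition~\ref{lemma:diffusion-assumption} with Proposition~\ref{prop:holomorphy} to obtain the $(\bsb,\xi,2,\cdot,\dotH^{r+1-\delta}(\bbT^d;\bbC))$-holomorphy (your choice $\xi=\theta/\Cpsi$ agrees with the paper's $\xi<\pi/(2C_{\mathrm{emb}}C_\psi)$, since $\theta$ there is exactly the strip width coming from the embedding of $\dotH^{r+d/2}$ into $L^\infty$), and then invoke Theorem~\ref{thm:ubsnubound}. Your additional remarks on the real-versus-complex target norm and the clash between the two uses of $\delta$ are careful bookkeeping that the paper leaves implicit, but they do not change the argument.
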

\begin{proof}
 The first part of the statement follows directly from Propositions \ref{prop:holomorphy} and
 \ref{lemma:diffusion-assumption}. The constant $\xi$ has to be taken
 \begin{equation*}
   \xi < \frac{\pi}{2C_{\mathrm{emb}}C_\psi},
 \end{equation*}
 where $C_{\mathrm{emb}}$ is the norm of the embedding operator of
 $\dotH^{r+d/2}(\bbT^d)$ into $L^\infty(\bbT^d)$ and $C_\psi$ is the Riesz basis
 constant of $(\psi_j)_{j\in\bbN}$ (see \eqref{eq:RszBas}).
 The application of Theorem \ref{thm:ubsnubound} concludes the proof.
\end{proof}

The following two remarks explain the improvement of our analysis compared to existing results.

\begin{remark}\label{remark:convergence-rates}[improved summability exponent]
  Let $\calX=L^2([0,2\pi])$ and let $\calY$ be a Hilbert space.
  Assume that $\mathcal{G}$ satisfies Assumption \ref{ass:G},
  and consider the orthonormal basis
  \begin{equation}\label{eq:xij}
    \psi_j(x):= \begin{cases}
                  \frac{1}{\sqrt{2\pi}} &\text{if $j=0$}\\
                  \frac{1}{\sqrt{\pi}}\cos(\frac{j}{2} x) &\text{if $j\ge 1$ is even}\\
                  \frac{1}{\sqrt{\pi}}\sin(\frac{j+1}{2} x) &\text{if $j\ge 1$ is odd},
                \end{cases}\qquad x\in [0,2\pi],~j\in\N_0
  \end{equation}
  of $\calX$. If $\bsb\in\ell^p(\N)$ for some $p\in (0,1)$,
  then by Proposition \ref{prop:holomorphy},
  \begin{equation*}
    u(\bsy)=\mathcal{G}\Big(\sum_{j\in\N}b_j y_j\psi_j\Big)
  \end{equation*}
  is $(\bsb,\theta/C_\pi,2,\delta,\calY_\C)$-holomorphic.
  On the other hand,
  by Remark \ref{remark:r-comparison} the map is
  also $(\bsb,\xi,1,\delta,\calY_\C)$-holomorphic for some $\xi>0$.

Applying \cite[Theorem 4.9]{DNSZ2023} for $(\bsb,\xi,1,\delta,\calY_\C)$-holomorphic functions,
we find that the Hermite coefficients satisfy $(\norm[\calY]{u_\bsnu})_{\bsnu\in\calF}\in\ell^{2p/(2-p)}(\calF)$.
On the other hand,
Theorem \ref{thm:bdXSum} gives $(\|{u_\bsnu}\|_{\calY})_{\bsnu\in\calF}\in\ell^{p}(\calF)$.
Due to $p<2p/(2-p)$,
Theorem \ref{thm:bdXSum}
provides sharper summability estimates in this case
and, consequently,
leads to higher approximation rates for best $n$-term approximation.
\end{remark}

\begin{remark}[improved summability range]
\label{rmk:CmpBCDM2016}
  The results in this manuscript additionally ``bridge the gap''
  between the orders of convergence obtained in \cite{BCDM2016} for globally
  supported bases, see \cite[Corollary 6.3]{BCDM2016},
  and the technique of holomorphic extension used here and in \cite{DNSZ2023}
  when applied to the low Sobolev-regularity setting.

  Let the trigonometric basis $\psi_j$ of $L^2([0,2\pi])$ be as in \eqref{eq:xij}.
  Let also $\eps>0$ be arbitrary
  and introduce the $H^{\eps+1/2}$-orthonormal basis $(\tilde\psi_j)_{j\in\bbN_0}$
  such that
  \begin{equation*}
    \tilde \psi_j \propto j^{-\eps-1/2}\psi_j, \qquad j\in\bbN_0.
  \end{equation*}
Let $u$ be defined as in \eqref{eq:u-diff}, with the above $\tilde\psi_j$ and with
  $\bsb\in \ell^p(\bbN_0)$. Then,
  Propositions
  \ref{prop:holomorphy} and \ref{lemma:diffusion-assumption} (with $r=\eps$),
  and Theorem \ref{thm:bdXSum} imply
  that $\| u_{\bsnu}\|_{\dotH^1}\in\ell^p(\calF)$.
  On the other hand, $\bsb\in \ell^p(\bbN_0)$ also implies
  $(j^{\eps+1/2}\|b_j\psi_j\|_{L^\infty} )_{j\in\bbN_0} \in \ell^p(\bbN_0)$, which
  is a sufficient condition for using \cite[Corollary 6.2]{BCDM2016} to
  show the $\ell^p(\calF)$ summability of $\|u_\bsnu\|_{\dotH^1}$,
  as outlined in the text after that statement.
\end{remark}

\section{Conclusions and future work}
\label{sec:Con}
We investigated the sparsity of coefficient sequences of Wiener-Hermite
polynomial chaos expansions of holomorphic maps $\calG$ between Hilbert spaces.
To this end, we used and extended a quantified notion of holomorphic dependence recently
developed in \cite{DNSZ2023}. We estimated the coefficient sequence summability
exponent for the images under $\calG$, subject to suitable encoder maps. The
setting adopted is abstract, and covers a range of maps. In particular, we
verify our abstract hypotheses for the coefficient-to-solution maps of a linear,
divergence-form elliptic PDE whose coefficients are modeled as a log-Gaussian
random field. This model has been investigated in recent years, e.g. in
\cite{BCDM2016}, by means of real-variable arguments. The arguments developed here extend the range of summability exponents as compared to \cite{DNSZ2023}, and recover, as a particular case, for globally supported, orthonormal encoding functions, the result from that reference.

The present results will be employed in \cite{MSZII-25} to infer expression rate bounds for certain neural operator networks and for finitely truncated, Wiener-Hermite polynomial chaos expansions to approximate holomorphic maps $\calG$ between separable function spaces.

\appendix

\section{Results on $(b,\xi,2,\delta, X)$-holomorphic functions}
\label{sec:AppHol}
Recall (see the proof of Theorem \ref{thm:bdXSum}) that, when $u$ is $(\bsb,\xi,r, \delta,X)$-holomorphic, then there exist,
for all $N\in\bbN$, functions $u_N:\R^N\to X$ satisfying items
\ref{item:hol}--\ref{item:vN} of
Definition~\ref{def:bdXHol}. We can expand those functions in the PC expansion
\begin{equation*}
  u_N=\sum_{\bsnu\in\N_0^N} u_{N,\bsnu} H_\bsnu,
\end{equation*}
where
\begin{equation}
  \label{eq:uNnu}
  u_{N,\bsnu}=\int_{\R^N} u_N(\bsy) H_\bsnu(\bsy) \rd \gamma_N(\bsy).
\end{equation}
In the following statement, we bound the derivatives of $u_N$, both in a
pointwise and $L^2_{\gamma_N}$ sense. The proof follows closely that of
\cite[Lemma 4.6]{DNSZ2023}.
\begin{lemma}\label{holo-lem1}
  Let $\bsb\in(0, \infty)^\infty$, $\xi>0$, $\delta>0$, and let $X$ be a
  complex-valued, separable Hilbert space.
  Let $u$ be $(\bsb,\xi,2,\delta, X)$-holomorphic.
  Let $N\in\N$ and let $u_N$, $\varphi_N$ be functions satisfying items
  \ref{item:hol}--\ref{item:vN} of Definition~\ref{def:bdXHol}.

  Then, denoting $\bsb_N=(b_j)_{j=1}^N$, for every
  $\bsnu\in\N_0^N$ it holds that
  \begin{equation}
    \label{eq:partialuN-pointwise}
    \|\partial^{\bsnu}u_N(\bsy)\|_X
    \leq
    \frac{\bsnu!|\bsnu|^{|\bsnu|/2}\bsb_N^\bsnu}{\kappa^{|\bsnu|}\bsnu^{\bsnu/2} }
    \varphi_N(\bsy)\qquad\forall\bsy\in\R^N.
  \end{equation}
  In addition,
  \begin{equation}
    \label{eq:partialuN-L2}
    \| \partial^{\bsnu} u_N \|^2_{L^2_{\gamma_N}(\R^N, X)} \leq
    \frac{|\bsnu|^{|\bsnu|}}{\bsnu^\bsnu}\left( \frac{\bsnu!\bsb_N^\bsnu }{\kappa^{|\bsnu|}} \right)^2 \| \varphi_N \|^2_{L^2_{\gamma_N}(\R^N)}.
  \end{equation}
\end{lemma}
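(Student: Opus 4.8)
The plan is to obtain the pointwise bound \eqref{eq:partialuN-pointwise} by a multivariate Cauchy integral estimate with contour radii optimized against the admissibility constraint, and then to read off the $L^2_{\gamma_N}$ bound \eqref{eq:partialuN-L2} by squaring and integrating.

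First, I would fix $\bsy\in\R^N$ and $\bsnu\in\N_0^N$. Since item \ref{item:hol} of Definition~\ref{def:bdXHol} gives a holomorphic extension of $u_N$ to every strip $\cS_\bsvarrho^\C$ with $(\bsb,\xi,2)$-admissible $\bsvarrho$, I apply the Cauchy integral formula in the variables $j\in\supp\bsnu$ over a polycircle of radii $\bsr=(r_j)$, which yields
\[
  \|\partial^\bsnu u_N(\bsy)\|_X \le \frac{\bsnu!}{\prod_{j\in\supp\bsnu} r_j^{\nu_j}}\,\sup_{|z_j|=r_j}\|u_N(\bsy+\bsz)\|_X .
\]
By item \ref{item:varphi}, as long as $\bsr$ is $(\bsb,\xi,2)$-admissible the supremum is dominated by $\varphi_N(\bsy)$. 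It then remains to choose $\bsr$ so as to maximize $\prod_j r_j^{\nu_j}$ subject to $\sum_j b_j^2 r_j^2\le\xi^2$.

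Second, I would solve this optimization. Writing $s_j=b_j^2 r_j^2$, the objective becomes $\bsb^{-\bsnu}\prod_j s_j^{\nu_j/2}$ to be maximized over $\sum_j s_j\le\xi^2$; a Lagrange (equivalently, weighted AM--GM) computation gives the optimizer $s_j=\nu_j\xi^2/|\bsnu|$, i.e.\ $r_j=(\xi/b_j)\sqrt{\nu_j/|\bsnu|}$, with optimal value $\prod_j r_j^{\nu_j}=\bsb_N^{-\bsnu}\,\xi^{|\bsnu|}\,\bsnu^{\bsnu/2}/|\bsnu|^{|\bsnu|/2}$. Substituting into the Cauchy estimate reproduces \eqref{eq:partialuN-pointwise} with $\kappa=\xi$. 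It is worth noting that the quadratic constraint, i.e.\ the choice $r=2$, is exactly what produces the exponent $|\bsnu|^{|\bsnu|/2}/\bsnu^{\bsnu/2}$ rather than its square $|\bsnu|^{|\bsnu|}/\bsnu^{\bsnu}$ obtained from the linear constraint $r=1$ in \cite{DNSZ2023}; this is the sharpening that drives the improved summability. The $L^2$ estimate \eqref{eq:partialuN-L2} then follows immediately: squaring the pointwise bound and integrating in $\bsy$ against $\gamma_N$ extracts the $\bsy$-independent constant and leaves $\|\varphi_N\|^2_{L^2_{\gamma_N}(\R^N)}$, the squared constant being $(\bsnu!)^2|\bsnu|^{|\bsnu|}(\bsb_N^\bsnu)^2/(\kappa^{2|\bsnu|}\bsnu^\bsnu)$ as claimed.

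The main obstacle is a domain technicality rather than the optimization itself. The optimal radii saturate the admissibility budget with equality, so the distinguished boundary $\{|z_j|=r_j\}$ touches the boundary $\{|\Im z_j|=r_j\}$ of the strip $\cS_\bsr^\C$, where holomorphy is only guaranteed on the open strip; moreover $r_j=0$ whenever $\nu_j=0$, which is incompatible with the requirement $\bsvarrho\in(0,\infty)^N$. I would dispose of both points by a limiting argument: integrate over radii $tr_j$ with $t\in(0,1)$, assigning a vanishing share of the budget to the coordinates $j\notin\supp\bsnu$ so that all components stay strictly positive, so that the contour lies strictly inside the open strip and inside $\calB(\bsvarrho)$ where item \ref{item:varphi} applies. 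The resulting estimate carries a harmless factor $t^{-|\bsnu|}$, and letting $t\to 1^-$ recovers the stated bound.
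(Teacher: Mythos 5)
Your proposal is correct and follows essentially the same route as the paper's proof: a multivariate Cauchy integral estimate with the anisotropic radii $\varrho_j \propto b_j^{-1}\sqrt{\nu_j/|\bsnu|}$, bounded via item \ref{item:varphi}, then squared and integrated. The only difference is technical: the paper avoids the saturation/positivity issues by fixing $\kappa<\xi$ strictly and spreading the leftover budget $\xi^2-\kappa^2$ over the non-support coordinates (plus a $\bsdelta$-padding), whereas your $t\to 1^-$ limiting argument achieves the same end and even yields the marginally sharper constant $\kappa=\xi$.
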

\begin{proof}
  Fix $\kappa$ such that $0< \kappa<\xi$. The case $\bsnu=(0,\dots,0)\in\N_0^N$ is trivial. We thus fix a nonzero $\bsnu\in \N_0^N$ and
  choose $\bsvarrho=(\varrho_j)_{j=1}^N$
  with
  \begin{equation*}
    \varrho_j^2 =
    \begin{dcases}
      \dfrac{\kappa^2}{b_j^2} \dfrac{\nu_j}{|\bsnu|}  &\text{if } j\in \supp(\bsnu), \\
      \dfrac{\xi^2 -\kappa^2}{b_j^2 N} &\text{if } j\not \in \supp(\bsnu).
    \end{dcases}
  \end{equation*}
  Then
  \begin{equation*}
    \sum_{j=1}^N
    \varrho_j^2b_j^2=\kappa^2 \sum_{j\in \supp(\bsnu)} \frac{\nu_j}{|\bsnu|}
    +\sum_{j\not\in \supp(\bsnu)}\frac{\xi^2-\kappa^2}{N} \leq \kappa^2 + \frac{N-1}{N}(\xi^2-\kappa^2)< \xi^2.
  \end{equation*}
  Hence there exists a $\bsdelta\in \R^N$ with $\delta_j>0$ such that $\bsvarrho+\bsdelta$ is $(\bsb,\xi, 2)$-admissible, i.e., there exists a
  holomorphic extension $u_N:\cS_{\bsvarrho+\bsdelta}^\bbC\to X$ as in
  Definition~\ref{def:bdXHol} \ref{item:hol}-\ref{item:varphi}.
  By Cauchy's integral formula, for all $\bsy\in \R^N$,
    \begin{equation*}
    \partial^{\bsnu} u_N(\bsy) = \frac{\bsnu!}{(2\pi \ii)^N} \int_{\partial\cC_\bsvarrho (\bsy)} \frac{u_N(\bsz)}{\prod_{j\in \{1, \dots, N\}} (z_j - y_j)^{\nu_j+1}} \prod_{j\in \{1, \dots, N\}} dz_j,
  \end{equation*}
  with
  \begin{equation*}
    \partial \cC_{\bsvarrho}(\bsy) =  \left\{ \bsz\in \C^N: |z_j - y_j | = \rho_j, \, j\in \{1, \dots, N\}\right\}.
  \end{equation*}
  Hence,
  \begin{equation*}
    \| \partial^{\bsnu} u_N(\bsy) \|_X \leq \frac{\bsnu!}{\bsrho^{\bsnu}} \sup_{\bsz \in \calB(\bsvarrho)} \| u_N (\bsy + \bsz) \|_{X}.
  \end{equation*}
  Substituting the values for $\bsvarrho$, we obtain
  \begin{equation*}
    \| \partial^{\bsnu} u_N(\bsy) \|_X \leq \frac{\bsnu! |\bsnu|^{|\bsnu|/2} \bsb_N^\bsnu}{\kappa^{|\bsnu|} \bsnu^{\bsnu/2}}\sup_{\bsz \in \calB(\bsvarrho)} \| u_N (\bsy + \bsz) \|_{X} .
  \end{equation*}
  Using Definition~\ref{def:bdXHol} \ref{item:varphi} concludes the proof of
  \eqref{eq:partialuN-pointwise}. Then, \eqref{eq:partialuN-L2} follows from
  squaring and integrating.
\end{proof}
In the following, we will use, for $M\in \N$ and for $\bsvarrho_N \in
(0,\infty)^N$, the quantities
\begin{equation} \label{eq:betaN}
	\beta_\bsnu(M,\bsvarrho_N) \coloneqq
	\sum_{\bsnu' \in \calF: \ \|\bsnu'\|_{\ell^\infty}\leq M} \binom{\bsnu}{\bsnu'}
	\bsvarrho_N^{2\bsnu'} = \prod_{j=1}^N\Bigg(\sum_{\ell=0}^{M}\binom{\nu_j}{\ell}\varrho_j^{2\ell}\Bigg),
	\quad \bsnu \in \calF.
\end{equation}
Note that $\beta_\bsnu(M,\bsvarrho_N)$ is obtained from the definition
\eqref{eq:beta} of $\beta_\bsnu$ by padding $\bsvarrho_N$ with zeros to an infinite sequence.
\begin{lemma}\label{holo-lem2}
Let $\bsb\in(0, \infty)^\infty$, $\xi>0$, $\delta>0$, and let $X$ be a
  complex-valued, separable Hilbert space.
  Let $u$ be $(\bsb,\xi,2,\delta,X)$-holomorphic with $u_{N, \bsnu}$ defined as
  in \eqref{eq:uNnu}. Let $N\in\N$ and $\bsvarrho_N= (\varrho_j)_{j=1}^N\in [0,\infty)^N$.

  Then, for any $M\in\N$,
  \begin{equation} \label{general}
    \sum_{\bsnu\in
      \N_0^N}\beta_\bsnu(M, \bsvarrho_N )\|u_{N,\bsnu}\|_{X}^2
    =
    \sum_{\bsnu\in\N_0^N:\|\bsnu\|_{\ell^\infty}\leq M}
    \frac{\bsvarrho_N^{2\bsnu}}{\bsnu!}
    \| \partial^\bsnu u_N\|_{L^2_{\gamma_N}(\R^N, X)}^2.
  \end{equation}
\end{lemma}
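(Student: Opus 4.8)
The plan is to expand both sides of \eqref{general} in the Wiener--Hermite coefficients $u_{N,\bseta}$ and match them after reordering finitely many sums. The engine is the differentiation rule for the $L^2_{\gamma_1}$-orthonormal Hermite polynomials, $H_n'=\sqrt{n}\,H_{n-1}$ (with $H_0'=0$), which iterates and tensorizes to $\partial^\bsnu H_\bseta=\sqrt{\bseta!/(\bseta-\bsnu)!}\,H_{\bseta-\bsnu}$ when $\bseta\geq\bsnu$, and $\partial^\bsnu H_\bseta=0$ otherwise. Since $X$ is a Hilbert space, the $X$-valued coefficients are handled exactly as scalars, Parseval holding for $X$-valued expansions.

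First I would compute, for a fixed $\bsnu$ with $\|\bsnu\|_{\ell^\infty}\leq M$, the Hermite expansion of $\partial^\bsnu u_N$. Differentiating the expansion $u_N=\sum_{\bseta\in\N_0^N}u_{N,\bseta}H_\bseta$ termwise (justified because the differentiated series converges in $L^2_{\gamma_N}$, as the derivative bounds \eqref{eq:partialuN-L2} of Lemma \ref{holo-lem1} control all relevant norms; equivalently one integrates by parts against $H_\bsmu$, the adjoint of $\partial_j$ acting as the raising operator $H_{\mu_j}\mapsto\sqrt{\mu_j+1}\,H_{\mu_j+1}$) and reindexing via $\bsmu=\bseta-\bsnu$ gives the coefficient $\sqrt{(\bsmu+\bsnu)!/\bsmu!}\,u_{N,\bsmu+\bsnu}$ at index $\bsmu$. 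Parseval's identity in $L^2_{\gamma_N}(\R^N,X)$ then yields
\[
  \norm[L^2_{\gamma_N}(\R^N,X)]{\partial^\bsnu u_N}^2
  =\sum_{\bseta\geq\bsnu}\frac{\bseta!}{(\bseta-\bsnu)!}\,\norm[X]{u_{N,\bseta}}^2 ,
\]
where finiteness of the left-hand side for each such $\bsnu$ is guaranteed by \eqref{eq:partialuN-L2}.

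Next, multiplying by $\bsvarrho_N^{2\bsnu}/\bsnu!$ and using $\bseta!/(\bsnu!\,(\bseta-\bsnu)!)=\binom{\bseta}{\bsnu}$ rewrites the right-hand side of \eqref{general} as $\sum_{\|\bsnu\|_{\ell^\infty}\leq M}\sum_{\bseta\geq\bsnu}\binom{\bseta}{\bsnu}\bsvarrho_N^{2\bsnu}\norm[X]{u_{N,\bseta}}^2$. Interchanging the two sums and absorbing the constraint $\bseta\geq\bsnu$ into the binomial coefficient, which vanishes whenever $\bsnu\not\leq\bseta$, produces $\sum_{\bseta\in\N_0^N}\big(\sum_{\|\bsnu\|_{\ell^\infty}\leq M}\binom{\bseta}{\bsnu}\bsvarrho_N^{2\bsnu}\big)\norm[X]{u_{N,\bseta}}^2$. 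The inner bracket is exactly $\beta_\bseta(M,\bsvarrho_N)$ by \eqref{eq:betaN}, so the right-hand side equals the left-hand side of \eqref{general}, completing the argument.

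The computation is essentially bookkeeping; the only two points requiring care are (i) fixing the Hermite normalization so that the factor $\sqrt{\bseta!/(\bseta-\bsnu)!}$ is exactly right, and (ii) justifying the interchange of summations. For (ii) I note that the outer index ranges over the \emph{finite} set $\{\bsnu\in\N_0^N:\|\bsnu\|_{\ell^\infty}\leq M\}$ of cardinality $(M+1)^N$, each $\norm[L^2_{\gamma_N}(\R^N,X)]{\partial^\bsnu u_N}^2$ is finite by Lemma \ref{holo-lem1}, and every summand is nonnegative; hence Tonelli's theorem makes the reordering unconditionally valid and no convergence issue arises. This is the only (mild) obstacle, and it also shows \emph{en passant} that both sides are finite.
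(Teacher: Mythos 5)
Your proof is correct and is essentially the same argument as the paper's, which is given by reference to \cite[Lemma 4.7]{DNSZ2023} and \cite[Theorem 3.3]{BCDM2016}: those proofs likewise rest on the Hermite relation $\partial^\bsnu H_\bseta=\sqrt{\bseta!/(\bseta-\bsnu)!}\,H_{\bseta-\bsnu}$, Parseval in $L^2_{\gamma_N}(\R^N,X)$, and the binomial resummation you carry out, with finiteness of each $\|\partial^\bsnu u_N\|_{L^2_{\gamma_N}(\R^N,X)}$ supplied by Lemma \ref{holo-lem1}. Your choice to justify the coefficient identity via Gaussian integration by parts (the raising/annihilation adjoint) rather than bare termwise differentiation is the right one, and the sum interchange is indeed immediate since the outer index set is finite and all terms are nonnegative.
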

\begin{proof}
The proof is the same as that of \cite[Lemma 4.7]{DNSZ2023} or \cite[Theorem 3.3]{BCDM2016}.
\end{proof}
\begin{proposition} \label{prop:bdHolSum}
Let $p\in (0,2)$, $\bsb\in(0, \infty)^\infty\cap \ell^p(\bbN)$, $\xi>0$, $\delta>0$, $M\in\N$
 with $M\geq 2/p$, and let $X$ be a
  complex-valued, separable Hilbert space.
Let $u$ be
$(\bsb,\xi,2, \delta,X)$-holomorphic. Let $u_N$ be functions satisfying items
  \ref{item:hol}--\ref{item:vN} of
  Definition~\ref{def:bdXHol} and let $u_{N, \bsnu}$ be defined as in \eqref{eq:uNnu}.
Let $\bsvarrho \in (0, \infty)^\infty$ be defined as in \eqref{eq:varrho_j}
and  $\bsvarrho_N=(\varrho_j)_{j=1}^N$.  Then, for all $N\in\N$,
\begin{equation}\label{eq:generalN}
  \sum_{\bsnu\in \N_0^N}\beta_\bsnu(M,\bsvarrho_N)\norm[X]{u_{N,\bsnu}}^2\le
  2\delta^2, \qquad
  \norm[\ell^{p/(2-p)}(\N_0^N)]{\beta_\bsnu(M,\bsvarrho_N)^{-1}} \le C'%
\end{equation}
for a finite constant $C'$ depending on $\bsb$, $\xi$, and $M$,
but independent of $\delta$ and $N\in\N$.
Furthermore, for every $N\in\N$ and every $s>0$,
$(\norm[X]{u_{N,\bsnu}})_{\bsnu\in\N_0^N}\in \ell^{s}(\N_0^N)$.
If in addition $s\geq p$, then the norm is independent of $N$, i.e.,
there exists a constant $C>0$ such that, for all $N\in \N$,
\begin{equation*}
\big\|(\norm[X]{u_{N,\bsnu}})_{\bsnu\in\N_0^N}\big\|_{\ell^{s}(\N_0^N)}\leq C <
\infty\;.
\end{equation*}
\end{proposition}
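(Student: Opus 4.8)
The plan is to reduce the weighted sum on the left of \eqref{eq:generalN} to a tractable combinatorial series by chaining the two auxiliary lemmas of this appendix. First I would apply Lemma~\ref{holo-lem2}, which rewrites $\sum_{\bsnu\in\N_0^N}\beta_\bsnu(M,\bsvarrho_N)\norm[X]{u_{N,\bsnu}}^2$ as $\sum_{\|\bsnu\|_{\ell^\infty}\le M}\frac{\bsvarrho_N^{2\bsnu}}{\bsnu!}\norm[L^2_{\gamma_N}(\R^N,X)]{\partial^\bsnu u_N}^2$, and then insert the $L^2$ derivative estimate \eqref{eq:partialuN-L2} of Lemma~\ref{holo-lem1}, using $\norm[L^2_{\gamma_N}(\R^N)]{\varphi_N}\le\delta$ from Definition~\ref{def:bdXHol}\ref{item:varphi}. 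The factor $(\bsnu!)^2$ from \eqref{eq:partialuN-L2} combines with the $1/\bsnu!$ weight to leave a single $\bsnu!$, so each summand takes the form $\delta^2\,\bsnu!\,\frac{|\bsnu|^{|\bsnu|}}{\bsnu^\bsnu}\prod_j t_j^{\nu_j}$ with $t_j:=\varrho_j^2 b_j^2/\kappa^2$ and $\kappa\in(0,\xi)$ the free parameter of Lemma~\ref{holo-lem1}.

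The main obstacle is the combinatorial estimate of this series and getting its constant to land at exactly $2\delta^2$. I would control the two dangerous factors separately: since $\|\bsnu\|_{\ell^\infty}\le M$ forces $\nu_j\le M$, one has $\bsnu!\le(M!)^{|\bsnu|}$, while $\frac{|\bsnu|^{|\bsnu|}}{\bsnu^\bsnu}\le e^{|\bsnu|}\binom{|\bsnu|}{\bsnu}$ follows from $|\bsnu|^{|\bsnu|}\le e^{|\bsnu|}|\bsnu|!$ together with $\bsnu^\bsnu\ge\bsnu!$. The delicate point is that the multinomial coefficient $\binom{|\bsnu|}{\bsnu}$ must be \emph{retained}, not cancelled against $\bsnu!$: writing $s_j:=eM!\,t_j$, each summand is then at most $\delta^2\binom{|\bsnu|}{\bsnu}\prod_j s_j^{\nu_j}$, and after dropping the constraint $\|\bsnu\|_{\ell^\infty}\le M$ the multinomial theorem collapses the sum to the geometric series $\delta^2\sum_{n\ge0}\big(\sum_{j=1}^N s_j\big)^n$. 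The design of \eqref{eq:varrho_j} is exactly what makes this converge: the $e$ absorbs $e^{|\bsnu|}$, the $M!$ absorbs $(M!)^{|\bsnu|}$, and $\sum_j s_j=\xi^2/(8\kappa^2)$, which equals $\tfrac12$ upon choosing $\kappa=\xi/2\in(0,\xi)$. Hence the series is bounded by $\delta^2/(1-\tfrac12)=2\delta^2$, yielding the first inequality in \eqref{eq:generalN}; I would stress that this step uses only $M\in\N$, not $M\ge 2/p$.

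For the second inequality in \eqref{eq:generalN}, I would read off from \eqref{eq:varrho_j} that $\varrho_j^{-1}\sim b_j^{(2-p)/2}$, so that $(\varrho_j^{-1})_{j\in\N}\in\ell^{2p/(2-p)}(\N)$ because $\bsb\in\ell^p$. Since $M\ge 2/p$ implies $2p/(2-p)>2/M$, Lemma~\ref{lemma:cnu} (equivalently \cite[Lemma~3.11]{DNSZ2023}) gives $(\beta_\bsnu(M,\bsvarrho)^{-1})_{\bsnu\in\calF}\in\ell^{p/(2-p)}(\calF)$. As $\beta_\bsnu(M,\bsvarrho_N)$ arises from $\beta_\bsnu(M,\bsvarrho)$ by zero-padding, the $\ell^{p/(2-p)}(\N_0^N)$ norm of its reciprocal is dominated by the $\ell^{p/(2-p)}(\calF)$ norm of $(\beta_\bsnu(M,\bsvarrho)^{-1})$, which supplies a finite $C'$ depending only on $\bsb,\xi,M$ and independent of $N$ and $\delta$.

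Finally, for the summability statement I would split into two regimes. For fixed $N$ and arbitrary $s>0$, I would re-run the first inequality with a larger index $M'\in\N$ satisfying $M'>2/s$ (legitimate precisely because that bound holds for every $M'\in\N$, with the weight from \eqref{eq:varrho_j} for $M'$); then $\norm[X]{u_{N,\bsnu}}\le\sqrt2\,\delta\,\beta_\bsnu(M',\bsvarrho^{(M')})^{-1/2}$, and the factorised sum $\sum_{\bsnu\in\N_0^N}\beta_\bsnu(M',\bsvarrho^{(M')})^{-s/2}$ is finite since each one-dimensional factor decays like $\nu_j^{-M's/2}$ with $M's/2>1$. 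For the $N$-uniform bound when $s\ge p$, I would apply Hölder with exponents $2/p$ and $2/(2-p)$,
\[
  \sum_{\bsnu\in\N_0^N}\norm[X]{u_{N,\bsnu}}^p\le\Big(\sum_{\bsnu\in\N_0^N}\norm[X]{u_{N,\bsnu}}^2\beta_\bsnu(M,\bsvarrho_N)\Big)^{p/2}\Big(\sum_{\bsnu\in\N_0^N}\beta_\bsnu(M,\bsvarrho_N)^{-p/(2-p)}\Big)^{(2-p)/2},
\]
and bound the two factors by the two inequalities of \eqref{eq:generalN} already established. This produces an $N$-independent $\ell^p$ bound, whence an $N$-independent $\ell^s$ bound for every $s\ge p$ via the embedding $\ell^p\hookrightarrow\ell^s$.
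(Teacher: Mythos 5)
Your proof is correct and follows essentially the same route as the paper's: the same chaining of Lemma~\ref{holo-lem2} with the derivative bound \eqref{eq:partialuN-L2}, the same choice $\kappa=\xi/2$, the same Stirling/multinomial reduction to a geometric series with ratio $\tfrac12$ (the paper sets $\tilde\bsb=\bsb^p/(2\|\bsb\|_{\ell^p}^p)$ and evaluates $\sum_{\bsnu}\frac{|\bsnu|!}{\bsnu!}\tilde\bsb^{\bsnu}=2$), and the same zero-padding observation combined with \cite[Lemma~3.11]{DNSZ2023} to get the $N$-independent bound on $\norm[\ell^{p/(2-p)}]{\beta_\bsnu(M,\bsvarrho_N)^{-1}}$. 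The one place you genuinely deviate is the sub-claim that $(\norm[X]{u_{N,\bsnu}})_{\bsnu\in\N_0^N}\in\ell^s(\N_0^N)$ for fixed $N$ and \emph{arbitrary} $s>0$: the paper re-chooses $M>(2-s)/s$, applies H\"older with exponents $2/s$ and $2/(2-s)$ (implicitly restricting to $s\in(0,2)$), and obtains finiteness of the weight factor by padding $\bsvarrho_N$ with $\exp(j)$ for $j>N$ and invoking \cite[Lemma~3.11]{DNSZ2023} once more; you instead use the termwise bound $\norm[X]{u_{N,\bsnu}}\le\sqrt{2}\,\delta\,\beta_\bsnu(M',\bsvarrho^{(M')}_N)^{-1/2}$ and factorize $\sum_{\bsnu\in\N_0^N}\beta_\bsnu^{-s/2}$ into $N$ convergent one-dimensional series. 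Your variant is slightly cleaner: it avoids the padding construction and covers $s\ge 2$ directly, whereas the paper's H\"older step only makes sense for $s<2$ (larger $s$ being handled implicitly by embedding). Both arguments correctly rest on the observation--which you rightly emphasize--that the $2\delta^2$ bound holds for every $M\in\N$ with the corresponding $\bsvarrho$ from \eqref{eq:varrho_j}, the hypothesis $M\ge 2/p$ being needed only for the weight summability.
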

\begin{proof}
  The proof follows that of \cite[Theorem 4.8]{DNSZ2023}.
  We have, by definition,
  $$\sum_{j\in\N}\varrho^2_jb^2_j=\frac{\xi^2}{8e M! }<\infty,$$ and
  $(\varrho_j^{-1})_{j\in\N}\in\ell^{2p/(2-p)}(\N)$.  Set
  $\kappa:=\xi/2\in (0,\xi)$.
  Using \eqref{general}, \eqref{eq:partialuN-L2}, and item \ref{item:varphi} of Definition
  \ref{def:bdXHol} we obtain
  \begin{align*}
    \sum_{\bsnu\in \N_0^N}\beta_\bsnu(M,\bsvarrho_N)\norm[X]{u_{N,\bsnu}}^2
    &\leq \delta^{2}
      \sum_{\set{\bsnu\in\N_0^N}{\norm[\ell^\infty]{\bsnu}\le M}} \frac{|\bsnu|^{|\bsnu|} \bsnu!}{\bsnu^{\bsnu}}\left(\frac{\bsvarrho_N^{\bsnu} \bsb_N^{\bsnu}}{\kappa^{|\bsnu|}}\right)^2\\
    &\leq \delta^{2} \sum_{\set{\bsnu\in\N_0^N}{\norm[\ell^\infty]{\bsnu}\le M}} \frac{|\bsnu|^{|\bsnu|} \bsnu!}{\bsnu^{\bsnu}}\left( \frac{2}{\xi} \right)^{2|\bsnu|}\left( \frac{\xi^2 }{4}\frac{\bsb_N^p}{ M! 2e\|\bsb\|^p_{\ell^p}}\right)^{\bsnu}\\
    &\leq \delta^{2} \sum_{\set{\bsnu\in\N_0^N}{\norm[\ell^\infty]{\bsnu}\le M}} \frac{|\bsnu|^{|\bsnu|}}{\bsnu^{\bsnu}} \left(\frac{ \bsb_N^{p}}{2e\|\bsb\|^{p}_{\ell^p}}  \right)^{\bsnu}\\
    & \eqqcolon \delta^2 C_M(\bsb_N).
  \end{align*}
Here we have used the inequality $\bsnu! \leq M!^{|\bsnu|}$,
valid for $\|\bsnu\|_{\ell^\infty}\leq
  M$.
Let $\tilde{\bsb} = \bsb^p / (2\|\bsb\|^p_{\ell^p})$, so that $\|\tilde\bsb\|_{\ell^1(\N)}=1/2$.
Using the Stirling inequality \cite[Equations (3.124), (3.127)]{CoDe2015}, we have
  \begin{equation}
    \label{eq:Cbsb}
    C_M(\bsb_N)\leq
    \sum_{\bsnu \in \calF} \frac{|\bsnu|^{|\bsnu} }{\bsnu^\bsnu} \left(\frac{\tilde{\bsb}}{e}  \right)^{\bsnu}\leq
    \sum_{\bsnu \in \calF} \frac{|\bsnu|!}{\bsnu!}\tilde{\bsb}^{\bsnu} = \frac{1}{1-\|\tilde\bsb\|_{\ell^1(\N)}} = 2.
  \end{equation}

  We temporarily extend $\bsvarrho_N$ to an infinite sequence,
  in order to derive the summability of
  the weight sequence $\beta_\bsnu(M,\bsvarrho_N)^{-1}$
  through \cite[Lemma 3.11]{DNSZ2023}.
  To do so, introduce the vector $\tilde\bsvarrho_N \in (0, \infty)^\infty$
  such that
  $\tilde \varrho_{N,j}:=\varrho_j$ if $j\le N$ and
  $\tilde \varrho_{N,j}:=\exp(j)$ otherwise.
  For any $q>0$ we then
  have $(\tilde \varrho_{N,j}^{-1})_{j\in\N}\in\ell^{q}(\N)$ and by
  \cite[Lemma 3.11]{DNSZ2023} this implies
  $$(\beta_\bsnu(M,\tilde\bsvarrho_N)^{-1})_{\bsnu\in\calF}\in\ell^{q/2}(\calF)$$
  as long as $M>2/q$.
  We observe that, if $\bsnu\in\calF$ with
  $\supp\bsnu\subseteq \{1,\dots,N\}$, then
  $\beta_{\bsnu}(M,\tilde\bsvarrho_N)=\beta_{\bsnu}(M,\bsvarrho_N)$.
  It follows that
  \begin{equation}
    \label{eq:beta-N-q/2}
    (\beta_{\bsnu}(M,\bsvarrho_N)^{-1})_{\bsnu\in\N_0^N}\in \ell^{q/2}(\N_0^N)
  \end{equation}
  for any $q>0$.

  Consider now a fixed $s \in (0,2)$
  and let $M\in\N$
  such that $M>(2-s)/s$.
  H\"older's inequality gives
  \begin{align*}
    \sum_{\bsnu\in\N_0^N}\norm[X]{u_{N,\bsnu}}^{s}
    &= \sum_{\bsnu\in\N_0^N}\norm[X]{u_{N,\bsnu}}^{s}\beta_{\bsnu}(M,\bsvarrho_N)^{\frac s 2}\beta_{\bsnu}(M,\bsvarrho_N)^{-\frac s 2}\nonumber\\
    &\le \Bigg(\sum_{\bsnu\in\N_0^N}\norm[X]{u_{N,\bsnu}}^{2}\beta_{\bsnu}(M,\bsvarrho_N) \Bigg)^{\frac s 2}\Bigg(\sum_{\bsnu\in\N_0^N}\beta_{\bsnu}(M,\bsvarrho_N)^{\frac{s}{s-2}} \Bigg)^{\frac{2-s}{2}}
  \end{align*}
  Using \eqref{eq:beta-N-q/2}, we can see that the second term in the
  multiplication above is bounded for all $s\in (0,2)$ and for all $N\in\bbN$.
  When $s>p$, in addition, we exploit \cite[Lemma 3.11]{DNSZ2023} which implies
there exists a constant $C'(\bsb,\xi, M)$ such that for all $N\in \N$,
  $$
  \big\|(\beta_\bsnu(M,\bsvarrho_N)^{-1})_{\bsnu\in \N^N_0}\big\|_{\ell^{s/(2-s)}(\N_0^N)}\leq
  \big\|(\beta_\bsnu(M,\bsvarrho_N)^{-1})_{\bsnu\in \calF}\big\|_{\ell^{p/(2-p)}(\N_0^N)}
  \leq C'(\bsb,\xi, M) < \infty \;.
  $$
This completes the proof.
\end{proof}

\section{Proof of Lemma \ref{lemma:cnu}}\label{sec:proofcnu}
The following proof corresponds, up to minor modifications and simplifications, to the proof of \cite[Lemma 6.5 and 6.6]{DNSZ2023}.

\begin{proof}[Proof of Lemma \ref{lemma:cnu}]
  \textbf{Step 1.} Fix $\bsnu\in\calF$ and let $j\in\N$.
  With $s:=\min\{M,\nu_j\}\le \nu_j$
  it then holds
  \begin{equation*}
    \binom{\nu_j}{s}
    =
    \frac{\nu_j!}{(\nu_j-s)! s!}
    \ge
    \frac{1}{s!} (\nu_j-s+1)^s\ge \nu_j^s \frac{1}{s! s^s}
    \ge
    \nu_j^s \frac{1}{M! M^M}= \nu_j^{\min\{\nu_j,M\}} \frac{1}{M!M^M}
    \ge
    \nu_j^{M} \frac{1}{M!M^{2M}}.
  \end{equation*}
  Furthermore,
  if $j\in\supp(\bsnu)$, then due to
  $s=\min\{\nu_j,M\}\ge 1$, with
  $\varrho_0:=\min \{1,\min_{j\in\N}\varrho_j\}$ we have
$$
\varrho_0^{2M} \leq \min\{ 1,\varrho_j\}^{2M} \leq \varrho_j^{2(s-1)}.
$$
Thus
$$\varrho_j^{\min\{\nu_j,M\}}\ge \varrho_0^{2M} \varrho_j^{2}$$ for all $j\in\N$.
In all, we conclude
\begin{equation}\label{eq:estb}
  \beta_\bsnu(M,\bsvarrho) = \prod_{j\in\N} \left(\sum_{l=0}^M
    \binom{\nu_j}{l}\varrho_j^{2l} \right) \ge
  \prod_{j\in\supp(\bsnu)}\binom{\nu_j}{\min\{\nu_j,M\}}
  \varrho_j^{2\min\{\nu_j,M\}} \ge \prod_{j\in\supp(\bsnu)}
  \frac{\varrho_0^{2M}}{M!M^{2M}} \varrho_j^{2}\nu_j^{M}.
\end{equation}
Since $\bsnu\in\calF$ was arbitrary, this estimate holds for all $\bsnu\in\calF$.

\textbf{Step 2.}
Denote $\hat\varrho_j:=\max\{1,K\varrho_j\}$
and note that for all $\bsnu\in\cF$
\begin{equation}\label{eq:estc}
  c_{\bsnu} \le
  \prod_{j\in\supp(\bsnu)}
  \hat\varrho_j^{2} \nu_j^{M}.
\end{equation}
With $\varrho_0:=\min\{1,\min_{j\in\N}\varrho_j\}$ set
\begin{equation*}
  K:=\left(\frac{\varrho_0^{2M}}{M!M^{2M}}\right)^{1/2},\qquad \tilde\varrho_j= K \varrho_j
\end{equation*}
such that for all $j\in\N$
\begin{equation*}
  K\varrho_j=\tilde\varrho_j = \hat\varrho_j\begin{cases}
    1 &\text{if }K\varrho_j\ge 1,\\
    K\varrho_j  &\text{if }K\varrho_j<1.
  \end{cases}
\end{equation*}
Let
\begin{equation*}
  C_\beta:= \prod_{\set{j\in\N}{K\varrho_j<1}} (K\varrho_j)^{2}.
\end{equation*}
This product is over a finite number of indices,
since $\varrho_j\to\infty$ as $j\to\infty$.

Then, for any $\bsnu\in\cF$,
\begin{equation*}
  \prod_{j\in\supp(\bsnu)} \tilde\varrho_j \ge C_\beta^{\frac{1}{2}} \prod_{j\in\supp(\bsnu)} \hat\varrho_j.
\end{equation*}
With \eqref{eq:estb} and \eqref{eq:estc} we thus obtain for every $\bsnu\in\cF$,
\begin{align*}
  \beta_\bsnu(M,\bsvarrho) &\ge \prod_{j\in\supp(\bsnu)} (K\varrho_j)^{2}\nu_j^M
 = \prod_{j\in\supp(\bsnu)} \left( \tilde \varrho_j\right)^{2}\nu_j^M \notag \\
                         &\ge C_\beta \prod_{j\in\supp(\bsnu)} (\hat\varrho_j)^{2}\nu_j^M \ge C_\beta c_{\bsnu}.
\end{align*}
This shows \eqref{eq:cnu-betanu}.

{\bf Step 3.}
By assumption, $\hat\varrho_j=\max\{1,K\varrho_j\}$
satisfies $(\hat\varrho_j^{-1})_{j\in\N}\in\ell^q(\N)$.
By definition of $c_{\bsnu}$, factorizing, we get
  \begin{equation*}
    \begin{split}
      \sum_{\bsnu\in\cF} c_{\bsnu}^{-\frac{q}{2}} & =
      \sum_{\bsnu\in\cF} \prod_{j\in\supp(\bsnu)}
      \left(\hat\varrho_j^{2} \nu_j^{M}\right)^{-\frac{q}{2}}
      \leq \prod_{j\in\N} \left( \hat\varrho_j^{-q}
        \sum_{n\in\N} n^{\frac{-q M}{2}} \right).
    \end{split}
  \end{equation*}
  The sum over $n$ equals some finite constant $C$ since by assumption
  $qM/2>1$.
  Using the inequality $\log(1+x)\le x$ for all $x>0$, we get
  \begin{equation*}
    \sum_{\bsnu\in\cF} c_{\bsnu}^{-\frac{q}{2}}\le \prod_{j\in\N}
    \left( 1 + C \hat\varrho_j^{-q}\right)
    = \exp\left(\sum_{j\in\N}\log (1+C \hat\varrho_j^{-q}) \right)
    \le \exp\left(\sum_{j\in\N} C \hat\varrho_j^{-q}\right),
  \end{equation*}
  which is finite since $(\hat\varrho_j^{-1})\in\ell^q(\N)$.
\end{proof}

\section{Summability of products of square exponentials}
\label{sec:SumSqExp}
We prove a technical lemma which is needed for the proof of Proposition \ref{prop:holomorphy}.
\begin{lemma}
  \label{lemma:expL2}
Let $\bsb\in (0,\infty)^\infty \cap \ell^2(\N)$, $\alpha \in (0, \infty)$, and $\tau\in (0,2)$.

Then, there exists $\delta>0$
such that for all $N\in \N$
  \begin{equation*}
   \int_\R^N \exp \Big( \alpha \Big(\sum_{j=1}^N b_j^2y_j^2\Big)^{\tau/2}\Big) \gamma_N(d(y_j)_{j=1}^N)
   \leq \delta.
  \end{equation*}
  Furthermore,
  $\bsy \mapsto \exp \Big( \alpha \Big(\sum_{j\in \N} b_j^2y_j^2\Big)^{\tau/2}\Big) \in L^2_\gamma(\R^\infty)$.
\end{lemma}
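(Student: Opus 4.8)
The plan is to exploit that $\tau/2<1$ to replace the exponent $(\sum_j b_j^2y_j^2)^{\tau/2}$ by a genuinely quadratic one, after which the Gaussian integral factorizes across coordinates and can be controlled using $\bsb\in\ell^2(\N)$. First I would record the elementary estimate: since $\tau/2<1$, the map $t\mapsto \alpha t^{\tau/2}-\epsilon t$ is bounded above on $[0,\infty)$ for every $\epsilon>0$, so there is a constant $C=C(\alpha,\tau,\epsilon)$ with $\alpha t^{\tau/2}\le \epsilon t+C$ for all $t\ge 0$. Writing $S_N:=\sum_{j=1}^N b_j^2y_j^2$ and applying this with $t=S_N$ gives
\begin{equation*}
  \int_{\R^N}\exp\big(\alpha S_N^{\tau/2}\big)\,\gamma_N(d\bsy)
  \le
  e^C\prod_{j=1}^N\int_\R e^{\epsilon b_j^2y^2}\,\frac{e^{-y^2/2}}{\sqrt{2\pi}}\,dy,
\end{equation*}
since the quadratic exponent decouples over the coordinates.

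Next I would choose $\epsilon$ and bound the product uniformly in $N$. As $\bsb\in\ell^2(\N)$, the quantity $b_*:=\sup_{j\in\N}b_j$ is finite; fixing any $\epsilon\in(0,(4b_*^2)^{-1})$ ensures $2\epsilon b_j^2\le \tfrac12$ for every $j$, so each one-dimensional integral converges and equals $(1-2\epsilon b_j^2)^{-1/2}$. Using $-\log(1-x)\le 2x$ for $x\in[0,\tfrac12]$, the logarithm of the product is at most $2\epsilon\sum_{j=1}^N b_j^2\le 2\epsilon\|\bsb\|_{\ell^2(\N)}^2$, which is independent of $N$. Hence the first assertion holds with $\delta:=e^C\exp\big(2\epsilon\|\bsb\|_{\ell^2(\N)}^2\big)$.

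For the membership $\bsy\mapsto\exp(\alpha S^{\tau/2})\in L^2_\gamma(\R^\infty)$, where $S:=\sum_{j\in\N}b_j^2y_j^2$, I would square the function (which merely replaces $\alpha$ by $2\alpha$) and pass to the limit. The integrand $\exp(2\alpha S_N^{\tau/2})$ depends only on $(y_1,\dots,y_N)$, so its $\gamma$-integral over $\R^\infty$ equals its $\gamma_N$-integral over $\R^N$, which the first part (applied with $2\alpha$) bounds by a constant $\delta'$ uniform in $N$. Since $\int_{\R^\infty}S\,\gamma(d\bsy)=\|\bsb\|_{\ell^2(\N)}^2<\infty$, the series $S$ is finite $\gamma$-a.e.\ and $S_N\uparrow S$; as $t\mapsto t^{\tau/2}$ is nondecreasing, the monotone convergence theorem yields
\begin{equation*}
  \int_{\R^\infty}\exp\big(2\alpha S^{\tau/2}\big)\,\gamma(d\bsy)
  =\lim_{N\to\infty}\int_{\R^N}\exp\big(2\alpha S_N^{\tau/2}\big)\,\gamma_N(d\bsy)
  \le\delta'<\infty.
\end{equation*}

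I expect the only genuine difficulty to be the uniformity of the bound in $N$, which rests on two facts that must be used in tandem: boundedness of $\bsb$ (so that a single $\epsilon$ keeps every Gaussian factor integrable, i.e.\ $2\epsilon b_j^2<1$ for all $j$ simultaneously) and square-summability of $\bsb$ (so that the resulting infinite product converges). The hypothesis $\tau<2$ is exactly what makes the initial convexity step available; were $\tau=2$, subtracting a linear term could not absorb the power and, for large $b_j$, the individual factor integrals would diverge.
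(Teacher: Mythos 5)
Your proposal is correct and follows essentially the same route as the paper: dominate $\alpha t^{\tau/2}$ by $\epsilon t + C$ (possible precisely because $\tau<2$), factorize the resulting Gaussian integral into one-dimensional factors $(1-2\epsilon b_j^2)^{-1/2}$, and control the infinite product through a logarithmic bound and $\bsb\in\ell^2(\N)$. Your explicit monotone-convergence argument for the membership in $L^2_\gamma(\R^\infty)$ is a welcome addition, as the paper leaves that final passage to the infinite-dimensional integral implicit.
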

\begin{proof}
  Let $\tilde{\alpha} > 0$ be small
  enough so that $\tilde{\alpha}\max_{j\in \N}b_j ^2 <1/2$ and let
  $C_{\bsb,\alpha, \tau}>0$ be a constant such that
  \begin{equation*}
    e^{\alpha x^{\tau/2}} \leq C_{\bsb, \alpha, \tau} e^{\tilde{\alpha} x}, \qquad \forall x\in\R_+.
  \end{equation*}
  We have, with $\bsy_N = (y_1, \dots, y_N)$,
  \begin{align*}
    \int_{\R^N} \exp\Big(\alpha\Big(\sum_{j=1}^N b_j^2y_j^2\Big)^{\tau/2} \Big) \gamma_N(d\bsy_N)
    &\leq C_{\bsb,\alpha, \tau} \int_{\R^N} \exp\Big(\tilde\alpha  \sum_{j=1}^Nb_j^2y_j^2\Big)\gamma_N(d\bsy_N)\\
    &\leq C_{\bsb,\alpha, \tau} \prod_{j=1}^N \left( \frac{1}{1-2\tilde\alpha b_j^2} \right)^{1/2}.
  \end{align*}
  To conclude the proof, it is then sufficient to show that
  \begin{equation}
    \label{eq:prod-bounded}
    \prod_{j=1}^\infty \left( \frac{1}{1-2\tilde\alpha b_j^2} \right) < \infty.
  \end{equation}
  To this end we first remark that
  \begin{equation*}
    \prod_{j=1}^\infty \left( \frac{1}{1-2\tilde\alpha b_j^2} \right)
     =
    \exp\left( \sum_{j\in \N}\log\frac{1}{1 - 2\tilde\alpha b_j^2}  \right)
     =
    \exp\left( \sum_{j\in \N}\log\left( 1 + \frac{2\tilde\alpha b_j^2}{1 - 2\tilde\alpha b_j^2} \right)  \right).
  \end{equation*}
  Since $b_j^2\to 0$ as $j\to\infty$, a sufficient condition for
  \eqref{eq:prod-bounded} is the convergence of the series $\sum_{j\in\N}
  \frac{b_j^2}{1-2\tilde\alpha b_j^2}$, which follows from
  $\bsb\in \ell^2(\N)$.
\end{proof}
\section{Regularity of the solution operator of \eqref{eq:diffusion-torus}}
\label{sec:RegDiffOp}
We will use, for the next statements,
the Hölder spaces $C^{k,\alpha}(\bbT^d; \bbC)$, defined for
$k\in\N_0$ and $\alpha\in (0,1)$, with norm
\begin{equation*}
  \|v\|_{C^{k, \alpha}(\bbT^d; \bbC)} = \sum_{\beta\in \N^d_0 : |\beta|\leq k} \|\partial^\beta v\|_{L^\infty(\bbT^d)} + \sup_{\beta\in\N_0^d: |\beta|=k}\left[ \partial^\beta v \right]_\alpha,
\end{equation*}
where
\begin{equation*}
  \left[ w \right]_\alpha = \sup_{x, y\in \bbT^d }\frac{|w(y)-w(x)|}{|y-x|^\alpha}.
\end{equation*}
When we write $C^{k, \alpha}(\bbT^d; \bbC)$ for $\alpha=0$, we imply $C^{k,
  0}(\bbT^d; \bbC) = C^k(\bbT^d; \bbC)$.

The domain of most spaces in the following is the $d$-dimensional torus,
and this argument will be mostly omitted to lighten notation.
In addition, \emph{all functions spaces which appear in the following
will be understood as spaces of complex-valued functions},
and this will also not be indicated explicitly.

\subsection{Bounds in $\dotH^{1+s}$ with $s\in (0,1)$ for solutions of the Poisson equation}
\label{sec:Ht}
In this section, we derive a data dependent bound for the solution $u$ of the
problem
\begin{equation}
  \label{eq:generic-poisson}
  -\nabla\cdot(b\nabla u) = f\text{ in }\bbT^d, \qquad \int u  = 0
\end{equation}
where $b\in L^\infty(\bbT^d ; \bbC)$, with real part bounded from below almost
everywhere by a positive constant, and $f\in \dotH^{-1}(\bbT^d, \bbC)$.
We denote by $\calS$ the operator $\calS: (b, f)\mapsto u$.
\begin{lemma}
  \label{lemma:H1diff-poisson}
  Let $f, f_\delta\in \dotH^{-1}(\bbT^d; \bbC)$ and let $b, b_\delta \in
  L^\infty(\bbT^d; \bbC)$ with $\Re (b(x))\geq
  b_{\min{}}>0$ and $\Re (b_\delta(x))\geq b_{\delta,\min{}}>0$ almost everywhere. Then,
  \begin{equation*}
    \| \calS(b, f) - \calS(b_\delta, f_\delta) \|_{\dotH^1} \leq \frac{1}{b_{\delta, \min{}}}\left( \|f-f
    _\delta \|_{\dotH^{-1}} + \frac{1}{b_{\min{}}}\|f\|_{\dotH^{-1}}\|b-b_\delta\|_{L^{\infty}}\right)
  \end{equation*}
\end{lemma}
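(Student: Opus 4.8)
The plan is to run the standard energy (Lax--Milgram) argument for divergence-form elliptic problems, extracting coercivity from the lower bound on $\Re(b)$. First I would record the elementary a priori bound for a single solution: writing $u = \calS(b,f)$, its weak formulation reads $\int_{\bbT^d} b\,\nabla u\cdot\overline{\nabla v} = \dup{f}{v}$ for all $v\in\dotH^1$, and testing with $v=u$ and taking real parts gives $b_{\min}\norm[L^2]{\nabla u}^2 \le \Re\int_{\bbT^d} b|\nabla u|^2 = \Re\dup{f}{u} \le \norm[\dotH^{-1}]{f}\,\norm[\dotH^1]{u}$. Since $\norm[\dotH^1]{u}=\norm[L^2]{\nabla u}$, this yields $\norm[\dotH^1]{\calS(b,f)} \le b_{\min}^{-1}\norm[\dotH^{-1}]{f}$. (The well-posedness of $\calS$ itself is the same computation: the sesquilinear form $a(u,v)=\int_{\bbT^d} b\,\nabla u\cdot\overline{\nabla v}$ is bounded on $\dotH^1$ and coercive because $\Re(b)\ge b_{\min}>0$.)

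The key step is to write an equation for the difference $w := \calS(b,f) - \calS(b_\delta,f_\delta) = u - u_\delta$ in which the \emph{fixed} coefficient $b_\delta$ plays the role of the leading coefficient. Using the two equations together with the ``add-and-subtract'' identity $-\nabla\cdot(b_\delta\nabla u) = f + \nabla\cdot((b-b_\delta)\nabla u)$, one finds in $\dotH^{-1}$ that
\[
-\nabla\cdot(b_\delta\nabla w) = (f-f_\delta) + \nabla\cdot\big((b-b_\delta)\nabla u\big).
\]
I would then test this identity against $w$, integrate the divergence term by parts, and take real parts, using $\Re(b_\delta)\ge b_{\delta,\min}$ on the left-hand side:
\[
b_{\delta,\min}\norm[L^2]{\nabla w}^2 \le \Re\dup{f-f_\delta}{w} - \Re\int_{\bbT^d}(b-b_\delta)\,\nabla u\cdot\overline{\nabla w}.
\]
The first term is bounded by $\norm[\dotH^{-1}]{f-f_\delta}\,\norm[L^2]{\nabla w}$ via the duality pairing, and the second by $\norm[L^\infty]{b-b_\delta}\,\norm[L^2]{\nabla u}\,\norm[L^2]{\nabla w}$ using Hölder's inequality.

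Dividing through by $\norm[L^2]{\nabla w}=\norm[\dotH^1]{w}$ (the estimate being trivial when $w=0$) gives
\[
\norm[\dotH^1]{w} \le \frac{1}{b_{\delta,\min}}\Big(\norm[\dotH^{-1}]{f-f_\delta} + \norm[L^\infty]{b-b_\delta}\,\norm[\dotH^1]{u}\Big),
\]
and substituting the a priori bound $\norm[\dotH^1]{u}\le b_{\min}^{-1}\norm[\dotH^{-1}]{f}$ from the first step yields exactly the claimed inequality. There is no genuine obstacle here beyond careful bookkeeping: the only subtleties are that, in the complex-valued setting, coercivity must be read off from the real part of the sesquilinear form, and that the mismatch between $b$ and $b_\delta$ must be shifted onto the right-hand side so that the coercive operator $-\nabla\cdot(b_\delta\nabla\,\cdot\,)$ acts on $w$ while the discrepancy $b-b_\delta$ is controlled through the already-bounded gradient of $u$.
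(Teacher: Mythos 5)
Your proof is correct and follows essentially the same route as the paper's: both exploit coercivity of the form $v\mapsto\int_{\bbT^d} b_\delta\nabla v\cdot\overline{\nabla v}$ (via $\Re(b_\delta)\geq b_{\delta,\min}$) applied to the difference $w=u-u_\delta$, shift the coefficient mismatch onto the right-hand side as $\nabla\cdot((b-b_\delta)\nabla u)$, and close with the a priori bound $\|u\|_{\dotH^1}\leq b_{\min}^{-1}\|f\|_{\dotH^{-1}}$. The only difference is presentational: the paper phrases the coercivity step as an operator-norm lower bound $b_{\delta,\min}\|u-u_\delta\|_{\dotH^1}\leq\|\nabla\cdot(b_\delta\nabla(u-u_\delta))\|_{\dotH^{-1}}$ and manipulates $\dotH^{-1}$ norms directly, whereas you test the difference equation with $w$ and divide — the same estimate in different notation.
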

\begin{proof}
  The statement follows from the following series of inequalities, where we
  denote $u_\delta = \calS(b_\delta, f_\delta)$ and $u = \calS(b, u)$:
  \begin{align*}
    b_{\delta, \min{}}\| u - u_\delta \|_{\dotH^1}
    &\leq \|\nabla \cdot(b_\delta \nabla u_\delta)  - \nabla \cdot(b_\delta \nabla u)\|_{\dotH^{-1}} \\
    & \leq \| f_\delta - \nabla \cdot(b_\delta \nabla u) \|_{\dotH^{-1}}\\
    & \leq \| f_\delta - f \|_{\dotH^{-1}} + \|\nabla \cdot((b-b_\delta) \nabla u) \|_{\dotH^{-1}}\\
    & \leq \| f_\delta - f \|_{\dotH^{-1}} + \|b - b_\delta\|_{L^\infty}\|\nabla u \|_{L^2}\\
    & \leq \| f_\delta - f \|_{\dotH^{-1}} + \frac{1}{b_{\min{}}}\|b - b_\delta\|_{L^\infty}\|f \|_{\dotH^{-1}}.
  \end{align*}
\end{proof}
\begin{proposition}
  \label{prop:Hs-poisson}
Let $u$ be the solution to \eqref{eq:generic-poisson} with
$f\in \dotH^{-1+s}(\bbT^d;\bbC)$
and
$b\in C^{0,s}(\bbT^d;\bbC)$, for $s\in[0,1]$, and $\Re(b(x))\geq b_{\min{}}>0$ almost everywhere.
Then, there exists $C>0$ independent of $f$ and
$b$ such that
\begin{equation*}
  \| u\|_{\dotH^{1+s}} \leq \frac{C}{b_{ \min{} }} \left(   \|f\|_{\dotH^{-1+s}} + \frac{1}{b_{\min{}}}\|f\|_{\dotH^{-1}}\|b\|_{C^{0,s}} \right).
\end{equation*}
\end{proposition}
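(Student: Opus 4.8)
The plan is to prove the estimate by a translation (difference-quotient) argument that turns the fractional regularity $u\in\dotH^{1+s}$ into a whole family of $\dotH^1$-estimates for translated solutions, each of which is delivered by Lemma~\ref{lemma:H1diff-poisson}. Since $\|u\|_{\dotH^{1+s}}=\|\nabla u\|_{\dotH^{s}}$ in Fourier variables, and for $s\in(0,1)$ the homogeneous norm has the Gagliardo-type representation $\|\nabla u\|_{\dotH^{s}}^2\simeq\int\|\tau_h\nabla u-\nabla u\|_{L^2}^2\,|h|^{-d-2s}\dd h$ (with $\tau_h v(x):=v(x+h)$, applied componentwise), it suffices to control $\|\tau_h\nabla u-\nabla u\|_{L^2}$ pointwise in $h$ and then integrate against the weight $|h|^{-d-2s}$. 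The endpoint cases can be handled first and already display the claimed two-term structure: $s=0$ is the energy estimate $\|u\|_{\dotH^1}\le b_{\min}^{-1}\|f\|_{\dotH^{-1}}$ (Lemma~\ref{lemma:H1diff-poisson} with $b_\delta=b$, $f_\delta=0$), while for $s=1$ the coefficient is Lipschitz, the equation reads $-\Delta u=\tfrac1b\left(f+\nabla b\cdot\nabla u\right)$, and $\|u\|_{\dotH^2}=\|\Delta u\|_{L^2}\le b_{\min}^{-1}\|f\|_{\dotH^0}+b_{\min}^{-2}\|b\|_{C^{0,1}}\|f\|_{\dotH^{-1}}$ follows directly.

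For the main argument I would translate the equation: writing $\tau_h$ for translation by $h$, the difference $\tau_h u-u$ solves, in $\dotH^{-1}$, the divergence-form equation $-\nabla\cdot\left(\tau_h b\,\nabla(\tau_h u-u)\right)=(\tau_h f-f)+\nabla\cdot\left((\tau_h b-b)\nabla u\right)$, where $\tau_h b$ again satisfies $\Re(\tau_h b)\ge b_{\min}$. Applying the $\dotH^1$-bound of Lemma~\ref{lemma:H1diff-poisson} with coefficient $\tau_h b$ and using $\|\nabla\cdot V\|_{\dotH^{-1}}\le\|V\|_{L^2}$ yields
\[
\|\tau_h\nabla u-\nabla u\|_{L^2}\le\frac{1}{b_{\min}}\Big(\|\tau_h f-f\|_{\dotH^{-1}}+\|\tau_h b-b\|_{L^\infty}\,\|\nabla u\|_{L^2}\Big).
\]
Squaring, multiplying by $|h|^{-d-2s}$ and integrating, the first contribution is computed exactly: the identity $\int|e^{\ii k\cdot h}-1|^2\,|h|^{-d-2s}\dd h=c_{d,s}|k|^{2s}$ converts $\int\|\tau_h f-f\|_{\dotH^{-1}}^2\,|h|^{-d-2s}\dd h$ into $c_{d,s}\|f\|_{\dotH^{-1+s}}^2$. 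For the second contribution I would insert the a priori bound $\|\nabla u\|_{L^2}\le b_{\min}^{-1}\|f\|_{\dotH^{-1}}$ (again Lemma~\ref{lemma:H1diff-poisson}) together with the Hölder bound $\|\tau_h b-b\|_{L^\infty}\le\|b\|_{C^{0,s}}|h|^s$, producing the factor $b_{\min}^{-1}\|f\|_{\dotH^{-1}}\|b\|_{C^{0,s}}$. Collecting the two terms gives the asserted estimate with $C=C(d,s)$ independent of $b$ and $f$.

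The hard part is the integrability of the coefficient term near $h=0$: with only $\|\tau_h b-b\|_{L^\infty}\lesssim\|b\|_{C^{0,s}}|h|^{s}$, the weight produces $\int_{|h|<1}|h|^{2s-d-2s}\dd h=\int_{|h|<1}|h|^{-d}\dd h$, which diverges, so the naive bound is exactly borderline \emph{at the same Hölder exponent} $s$. I expect this to be the crux of the proof and the point demanding the most care; it reflects the fact that multiplication by a $C^{0,s}$ function maps $\dotH^{s}$ only into the strictly larger Besov endpoint $B^{s}_{2,\infty}$, so that $u$ lies a priori in $B^{1+s}_{2,\infty}$ rather than in $\dotH^{1+s}=B^{1+s}_{2,2}$. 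The clean way to close the gap is to exploit a slight surplus of coefficient regularity, $b\in C^{0,\sigma}$ with $\sigma>s$, for which $\int_{|h|<1}|h|^{2\sigma-d-2s}\dd h<\infty$; this surplus is available in the intended application (Proposition~\ref{lemma:diffusion-assumption}), where the loss of $\delta>0$ derivatives in the target space $\dotH^{r+1-\delta}$ furnishes exactly the needed margin. Alternatively, one can localize the freezing-of-coefficients argument so that the oscillation $\|b-b(x_0)\|_{L^\infty}$ on a small ball becomes small enough to absorb the arising $\|u\|_{\dotH^{1+s}}$ term, and then patch the local estimates via a partition of unity; this route avoids the surplus at the cost of a more technical globalization step.
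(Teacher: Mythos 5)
Your difference-quotient route is genuinely different from the paper's proof, which instead mollifies both the coefficient and the datum at scale $\delta$, bounds $\|u-\calS(b_\delta,f_\delta)\|_{\dotH^1}$ via Lemma~\ref{lemma:H1diff-poisson}, bounds $\|\calS(b_\delta,f_\delta)\|_{\dotH^2}$ via the Lipschitz-coefficient estimate, and then takes $\delta=t$ in the $K$-functional $K(u,t,\dotH^1,\dotH^2)$. Your intermediate steps are correct: the equation satisfied by $\tau_h u-u$, the resulting bound $\|\tau_h\nabla u-\nabla u\|_{L^2}\le b_{\min}^{-1}\bigl(\|\tau_h f-f\|_{\dotH^{-1}}+\|\tau_h b-b\|_{L^\infty}\|\nabla u\|_{L^2}\bigr)$, and the exact Plancherel evaluation of the $f$-contribution.

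More importantly, the borderline obstruction you identify is not a defect of your route relative to the paper's: it is the \emph{same} obstruction, and the paper's proof does not actually overcome it. The identity $\|u\|_{\dotH^{1+s}}=\sup_{t>0}t^{-s}K(u,t,\dotH^1,\dotH^2)$ with which that proof opens is false for $s\in(0,1)$: the right-hand side is the norm of the real interpolation space $(\dotH^1,\dotH^2)_{s,\infty}=\dot{B}^{1+s}_{2,\infty}$, whereas $\dotH^{1+s}=\dot{B}^{1+s}_{2,2}=(\dotH^1,\dotH^2)_{s,2}$, and the inclusion of the latter in the former is strict. Inserting the paper's bound $K(u,t)\lesssim t^s$ into the correct norm $\bigl(\int_0^\infty(t^{-s}K(u,t))^2\,\frac{dt}{t}\bigr)^{1/2}$ produces a logarithmic divergence at $t=0$ --- exactly your divergent integral $\int_{|h|<1}|h|^{-d}\,dh$ in disguise. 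So both arguments prove only the weak Besov bound $\|u\|_{\dot{B}^{1+s}_{2,\infty}}\lesssim\cdots$, and in fact the statement as literally written appears to fail for $s\in(0,1)$: in $d=1$ one has $u'=(c-F)\,b^{-1}$ with $F'=f$ and $c$ fixed by periodicity, and taking $f$ smooth and $b^{-1}=1+\epsilon g$ with $g$ a lacunary Weierstrass-type function lying in $C^{0,s}$ but in $H^s(I)$ for no interval $I$ yields $u\notin\dotH^{1+s}$ while all hypotheses hold. (Only the endpoints $s=0$ and $s=1$, which you verified directly, are unproblematic.)

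Your proposed repair --- requiring $b\in C^{0,\sigma}$ with $\sigma>s$ strictly, with the constant depending on $\sigma-s$ --- is the right one and costs nothing downstream: in the proof of Proposition~\ref{lemma:diffusion-assumption} the coefficient is measured in $C^{0,\alpha}$ with $\alpha=s+\delta_1>s$, so the application only ever invokes this proposition with exactly such a surplus. Under that hypothesis your proof closes: the $b$-term integral $\int_{|h|<1}|h|^{2\sigma-2s-d}\,dh$ converges, and the $f$-term was already handled exactly by Plancherel. Note that repairing the $K$-functional proof requires an extra refinement, since the mollification error $\|f-f_\delta\|_{\dotH^{-1}}\lesssim\delta^{s}\|f\|_{\dotH^{-1+s}}$ is itself only a sup-type bound and would still make the $(s,2)$-integral diverge; one needs square-mean control in $\delta$ of that error. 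In this respect your route is the cleaner basis for a corrected statement.
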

\begin{proof}
  We use the fact that
  \begin{equation*}
    \| u\|_{\dotH^{1+s}} = \sup_{t>0} t^{-s}K(u, t, \dotH^1, \dotH^2),
  \end{equation*}
  with
  \begin{equation*}
    K(u, t, \dotH^1, \dotH^2) = \inf_{v_\delta\in \dotH^2} \|u -v_\delta\|_{\dotH^1} + t\|v_\delta\|_{\dotH^2}.
  \end{equation*}
  There exists $C_1\geq 1$ such that, for all $\delta\in (0,1)$, there exist $f_\delta$ and $b_\delta$ such that
  \begin{align*}
    \|b-b_\delta \|_{L^\infty} \leq C_1 \delta^{s}\|b\|_{C^{0,s}} \qquad \| b_\delta\|_{W^{1, \infty}}\leq C_1 \delta^{-1+s} \|b\|_{C^{0,s}}\\
    \|f-f_\delta \|_{\dotH^{-1}} \leq C_1 \delta^{s}\|f\|_{\dotH^{-1+s}} \qquad \| f_\delta\|_{L^2}\leq C_1 \delta^{-1+s} \|f\|_{\dotH^{-1+s}}\qquad \|f_\delta\|_{\dotH^{-1}}\leq C_1\|f\|_{\dotH^{-1}}.
  \end{align*}
  Let now $\delta^{s}\leq b_{\min{}}/(2C_1 \|b\|_{C^{0,s}}) \eqqcolon \delta_0^s$, so that $\Re(b_\delta(x))\geq
  b_{\delta, \min{}} \geq b_{\min{}}/2$ almost everywhere. We denote $u_\delta =
  \calS(b_\delta, f_\delta)$, so that
  \begin{align*}
    \| u_\delta \|_{\dotH^2}
    &\leq \frac{2}{b_{\min{}}} \left( \|f_\delta\|_{L^2} + \frac{2\|b_\delta\|_{W^{1, \infty}}}{ b_{\min{}}} \|f_\delta\|_{\dotH^{-1}}\right)\\
    & \leq\frac{2 C_1}{b_{\min{}}} \delta^{-1+s}\left( \|f\|_{\dotH^{-1+s}} + \frac{2C_1\|b\|_{C^{0,s}}}{ b_{\min{}}} \|f\|_{\dotH^{-1}}\right) .
  \end{align*}
  It follows from Lemma
  \ref{lemma:H1diff-poisson} and the inequalities above that, for sufficiently
  small $\delta$,
  \begin{equation}
    \label{eq:bound-K}
  \begin{aligned}
    &K(u, t, \dotH^1, \dotH^2) \\
    &\quad\leq \| u - u_\delta \|_{\dotH^1} + t\|u_\delta\|_{\dotH^2}\\
    &\quad\leq  \frac{2 C_1\delta^s}{b_{\min{}}} \left(   \|f\|_{\dotH^{-1+s}} + \frac{2}{b_{\min{}}}\|f\|_{\dotH^{-1}}\|b\|_{C^{0,s}}  + t \delta^{-1} \left(\|f\|_{\dotH^{-1+s}}  + \frac{2C_1}{b_{\min}}\| b\|_{C^{0,s}}\|f\|_{\dotH^{-1}}\right)   \right)
    \\ &\quad\leq
\frac{2 C_1^2\delta^s}{b_{\min{}}} \left( 1+t\delta^{-1} \right)\left(   \|f\|_{\dotH^{-1+s}} + \frac{2}{b_{\min{}}}\|f\|_{\dotH^{-1}}\|b\|_{C^{0,s}}\right).
  \end{aligned}
  \end{equation}
  We conclude by remarking that, given $t_0>0$,
  \begin{equation*}
\sup_{t>0} t^{-s}K(u, t, \dotH^1, \dotH^2)\leq \max\left(\sup_{t\in(0, t_0)} t^{-s}K(u,t, \dotH^1_0, \dotH^2),  t_0^{-s}\|u\|_{\dotH^1_0}\right),
  \end{equation*}
  hence, choosing $\delta = t$ in \eqref{eq:bound-K} and $t_0 = \delta_0$, we obtain
  \begin{align*}
    \| u\|_{\dotH^{1+s}}
    &\leq \max\left(\frac{4C_1^2}{b_{\min{}}}\left(   \|f\|_{\dotH^{-1+s}} + \frac{2}{b_{\min{}}}\|f\|_{\dotH^{-1}}\|b\|_{C^{0,s}} \right),  \frac{2C_1\|b\|_{C^{0,s}}\|f\|_{\dotH^{-1}}}{b_{\min{}}^2}\right)
    \\ &\leq \frac{4C_1^2}{b_{\min{}}}\left(   \|f\|_{\dotH^{-1+s}} + \frac{2}{b_{\min{}}}\|f\|_{\dotH^{-1}}\|b\|_{C^{0,s}} \right).
  \end{align*}
\end{proof}
\subsection{Bounds on exponentials and products of functions}
\begin{lemma}
  \label{lemma:Cka-exp}
  Let $k\in\N_0$, $\alpha\in [0,1]$, and $d\in\N$.
  There exists a constant $C$ that depends on
  $k$ and on $d$ such that for all $f\in C^{k, \alpha}(\bbT^d)$,
  \begin{equation*}
    \| e^f \|_{C^{k, \alpha}(\bbT^d)} \leq C e^{\|f\|_{L^{\infty}(\bbT^d)}} \| f\|_{C^{k, \alpha}(\bbT^d)}\left( \| f\|_{C^{k}(\bbT^d)} +1\right)^{k}.
  \end{equation*}
\end{lemma}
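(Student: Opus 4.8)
The plan is to reduce everything to the Faà di Bruno (chain rule) formula for the composition $e^f = \exp\circ f$. Since every derivative of $\exp$ equals $\exp$ itself, this formula gives, for each multi-index $\beta$ with $1\le|\beta|\le k$,
\begin{equation*}
  \partial^\beta (e^f) = e^f \sum_{(\gamma_1,\dots,\gamma_m)} c_{\gamma_1,\dots,\gamma_m}\,\partial^{\gamma_1}f\cdots\partial^{\gamma_m}f,
\end{equation*}
where the (finite) sum runs over tuples with $m\ge 1$, each $|\gamma_i|\ge 1$, and $\gamma_1+\dots+\gamma_m=\beta$, and the combinatorial constants depend only on $\beta$ (hence only on $k$ and $d$); in particular the number of factors satisfies $m\le|\beta|\le k$. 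I would then bound the two constituents of $\|e^f\|_{C^{k,\alpha}(\bbT^d)}$ separately and termwise in this representation: the $L^\infty$ norms $\|\partial^\beta e^f\|_{L^\infty}$ for $|\beta|\le k$, and the top-order Hölder seminorms $[\partial^\beta e^f]_\alpha$ for $|\beta|=k$.

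For the $L^\infty$ part I would use $\|e^f\|_{L^\infty(\bbT^d)}\le e^{\|f\|_{L^\infty(\bbT^d)}}$ (from $|e^{f}|=e^{\Re f}$) together with $\|\partial^{\gamma_i}f\|_{L^\infty}\le\|f\|_{C^k(\bbT^d)}$ for each factor. In a product of $m\ge1$ factors, I would bound one factor by $\|f\|_{C^{k,\alpha}(\bbT^d)}\ge\|f\|_{C^k}$ and the remaining $m-1\le k-1$ factors by $\|f\|_{C^k}$, so each term is controlled by $e^{\|f\|_{L^\infty}}\|f\|_{C^{k,\alpha}}(\|f\|_{C^k}+1)^{k}$; summing over the finitely many terms (whose number depends only on $k$ and $d$) yields the claim for every $\|\partial^\beta e^f\|_{L^\infty}$ with $|\beta|\ge1$. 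The single $\beta=0$ summand contributes only $\|e^f\|_{L^\infty}\le e^{\|f\|_{L^\infty}}$; this is the one term not carrying a factor of $\|f\|$, and it is absorbed into the constant.

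For the Hölder seminorm (for $\alpha\in(0,1]$; for $\alpha=0$ this term is absent by the convention $C^{k,0}=C^k$) I would apply the Leibniz-type bound $[g_0\cdots g_m]_\alpha\le\sum_i[g_i]_\alpha\prod_{j\ne i}\|g_j\|_{L^\infty}$ to each product $e^f\,\partial^{\gamma_1}f\cdots\partial^{\gamma_m}f$. The two seminorm inputs are $[e^f]_\alpha\le e^{\|f\|_{L^\infty}}[f]_\alpha$, which follows from $|e^{z}-e^{w}|\le e^{\max(\Re z,\Re w)}|z-w|$, and $[\partial^{\gamma_i}f]_\alpha\le C\|f\|_{C^{k,\alpha}(\bbT^d)}$: for $|\gamma_i|=k$ this is immediate from the definition, while for $|\gamma_i|<k$ the derivative $\partial^{\gamma_i}f$ is Lipschitz and the compactness (bounded diameter) of $\bbT^d$ gives $[\partial^{\gamma_i}f]_\alpha\le C\|\partial^{\gamma_i}f\|_{C^1}\le C\|f\|_{C^k}$; the same compactness argument also gives $[f]_\alpha\le C\|f\|_{C^{k,\alpha}}$. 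Combining these with the $L^\infty$ bounds on the remaining factors — again extracting one $\|f\|_{C^{k,\alpha}}$ and bounding the at most $k-1$ other factors by $\|f\|_{C^k}$, and using $m\le k$ — reproduces the factor $e^{\|f\|_{L^\infty}}\|f\|_{C^{k,\alpha}}(\|f\|_{C^k}+1)^{k}$ for each term; summation over the finitely many terms completes the estimate.

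The main obstacle is the combinatorial bookkeeping in the Hölder-seminorm step: one must verify, uniformly across every term of the Faà di Bruno expansion, that exactly one factor of $\|f\|_{C^{k,\alpha}}$ is produced while the remaining derivatives pile up into the power $(\|f\|_{C^k}+1)^{k}$, and one must supply the interpolation estimate $[\partial^{\gamma}f]_\alpha\le C\|f\|_{C^k}$ for the lower-order derivatives (legitimate precisely because $\bbT^d$ is compact). The remaining ingredients — the chain-rule structure and the elementary Lipschitz bound on $\exp$ over bounded sets — are routine.
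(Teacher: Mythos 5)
Your proposal follows essentially the same route as the paper's proof: the Fa\`a di Bruno formula for $\partial^\beta e^f$ (with every derivative of $\exp$ equal to $\exp$), the Lipschitz bound $|e^{f(x)}-e^{f(y)}|\le e^{\|f\|_{L^\infty}}|f(x)-f(y)|$, and a product estimate for the H\"older seminorm in which exactly one factor contributes $\|f\|_{C^{k,\alpha}}$ while the remaining at most $k-1$ derivative factors are bounded by $\|f\|_{C^k}$. The paper implements your ``Leibniz rule for seminorms'' by hand: it splits $(\partial^\beta e^f)(x)-(\partial^\beta e^f)(y)$ into $\bigl(e^{f(x)}-e^{f(y)}\bigr)g_\beta(x)+e^{f(y)}\bigl(g_\beta(x)-g_\beta(y)\bigr)$ and telescopes the product differences inside $g_\beta$, which is the identical argument packaged differently. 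On one point you are more careful than the paper: the bound $[\partial^\gamma f]_\alpha\le C\|f\|_{C^k}$ for the lower-order derivatives $|\gamma|<k$ (legitimate because $\bbT^d$ has finite diameter) is used silently in the paper's estimate of $|g_\beta(x)-g_\beta(y)|$, whereas you supply it explicitly.

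One claim in your write-up is not literally correct: the $\beta=0$ contribution $\|e^f\|_{L^\infty}\le e^{\|f\|_{L^\infty}}$ cannot be ``absorbed into the constant,'' because the right-hand side of the lemma is proportional to $\|f\|_{C^{k,\alpha}}$ and therefore tends to $0$ as $f\to 0$, while $\|e^f\|_{C^{k,\alpha}}\ge\|e^f\|_{L^\infty}\to 1$. This, however, exposes a defect of the lemma's statement rather than of your method: the inequality as printed fails at $f\equiv 0$, and the paper's own proof avoids confronting this only because it never estimates the $L^\infty$ component of the $C^{k,\alpha}$ norm at all, only the top-order H\"older seminorm. The natural repair is to replace the factor $\|f\|_{C^{k,\alpha}}$ on the right-hand side by $\|f\|_{C^{k,\alpha}}+1$; with that modification your argument, like the paper's, is complete, and the downstream uses (Lemma \ref{lemma:Cka-expdiff} and the proof of Proposition \ref{lemma:diffusion-assumption}) are unaffected, since there these algebraic factors are in any case absorbed into exponentials of the norm.
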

\begin{proof}
  We will use the Faà di Bruno formula
  \cite[eq.~(1.23)]{CoDaNi2010}, valid for any $0 \ne \beta \in \bbN^d_0$,
  \begin{equation*}
    (\partial^\beta e^f) (x) = e^{f(x)} \sum_{\ell=1}^{|\beta|} \sum_{\substack{\eta_1, \dots,\eta_\ell \in \bbN^d_0 \\\eta_1+\dots+\eta_\ell = \beta}}\gamma_{\eta_1, \dots,\eta_\ell} \prod_{i=1}^\ell \left(\partial^{\eta_i} f  \right)(x)\eqqcolon e^{f(x)}g_\beta(x),
  \end{equation*}
  where the constants $\gamma_{\eta_1, \dots, \eta_\ell}$ depend on $|\beta|$
  and on $d$. Start by remarking that
  \begin{equation*}
    (\partial^\beta e^f)(x) -
    (\partial^\beta e^f)(y) = \left( e^{f(x)} - e^{f(y)} \right)g_{\beta}(x) + e^{f(y)} \left( g_\beta(x) - g_\beta(y) \right) \eqqcolon (I) + (II).
  \end{equation*}
  For any multiindex $\beta$ there exists $C_{|\beta|, d}>0$ such that for all $x\in \bbT^d$
  \begin{equation*}
    |g_\beta(x)| \leq C_{|\beta|, d} \| f\|_{C^{|\beta|}(\bbT^d)}\left( \| f\|_{C^{|\beta|}(\bbT^d)}+1  \right)^{|\beta|-1}.
  \end{equation*}
  In addition, for all $x, y$
  \begin{equation}
    \label{eq:Cka-1}
    \begin{aligned}
    | e^{f(x)} - e^{f(y)} | &\leq
    e^{\max\{|f(x)|, |f(y)|\}} | f(x)  - f(y) |
    \\ &\leq
    e^{\|f\|_{L^\infty(\bbT^d)}} \|f\|_{C^{0, \alpha}(\bbT^d)}  |x-y|^\alpha,
  \end{aligned}
\end{equation}
  hence
  \begin{equation}
    \label{eq:Cka-2}
    |(I)| \leq C_{|\beta|, d} e^{\|f\|_{L^\infty(\bbT^d)}}\|f\|_{C^{|\beta|, \alpha}(\bbT^d)}\left( \|f\|_{C^{|\beta|}(\bbT^d)} +1\right)^{|\beta|} |x-y|^\alpha.
  \end{equation}
  We now estimate the term $(II)$.
Note that
  \begin{multline*}
    \prod_{i=1}^\ell (\partial^{\eta_i}f)(x)-
    \prod_{i=1}^\ell (\partial^{\eta_i}f)(y) \\ =
    \sum_{n=1}^\ell\left[ \bigg(\prod_{i=1}^{n-1}(\partial^{\eta_i}f)(x)   \bigg)\bigg( (\partial^{\eta_n}f )(x) - (\partial^{\eta_n}f )(y)\bigg)\bigg( \prod_{i=n+1}^\ell (\partial^{\eta_i}f)(y) \bigg)\right],
  \end{multline*}
  whence
  \begin{equation*}
    | g_\beta(x)  - g_\beta(y)|\leq
    \sum_{\ell=1}^{|\beta|} \sum_{\substack{\eta_1, \dots,\eta_\ell \in \bbN^d_0 \\\eta_1+\dots+\eta_\ell = \beta}}\gamma_{\eta_1, \dots,\eta_\ell} \ell \| f\|_{C^{|\beta|}(\bbT^d)}^{\ell-1} \|f\|_{C^{|\beta|,\alpha}(\bbT^d)}|x-y|^\alpha.
  \end{equation*}
  We conclude that there exists a constant $C_{|\beta|,d}>0$ such that, for all
  $x, y$
  \begin{equation}
    \label{eq:Cka-3}
    |(II)| \leq C_{|\beta|,d}e^{\|f\|_{L^\infty(\bbT^d)}} \|f\|_{C^{|\beta|, \alpha}(\bbT^d)}(\|f\|_{C^{|\beta|}(\bbT^d)}+1)^{|\beta|} |x-y|^\alpha.
  \end{equation}
  Combining \eqref{eq:Cka-1}, \eqref{eq:Cka-2}, and \eqref{eq:Cka-3} concludes
  the proof.
\end{proof}
\begin{lemma}
  \label{lemma:Cka-mult}
  Let $k\in\N_0$, $\alpha\in (0,1)$, and let $\Omega\subset\R^d$ be bounded. Then
  $C^{k,\alpha}(\Omega; \bbC)$ is closed under multiplication and in particular there
  exists $C>0$ that depends on $k$ such that for all $f,g\in
  C^{k,\alpha}(\Omega; \bbC)$,
  \begin{equation*}
    \| fg \|_{C^{k, \alpha}(\Omega)} \leq C\| f \|_{C^{k, \alpha}(\Omega)}   \| g \|_{C^{k, \alpha}(\Omega)}.
  \end{equation*}
\end{lemma}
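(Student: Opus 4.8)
The plan is to expand $\partial^\beta(fg)$ by the multivariate Leibniz rule and to estimate the two pieces of the $C^{k,\alpha}$ norm separately, namely the sum of $L^\infty$-norms of derivatives up to order $k$ and the top-order Hölder seminorm. For any multiindex $\beta\in\N_0^d$ with $|\beta|\le k$ one has
\begin{equation*}
  \partial^\beta(fg) = \sum_{\gamma\le\beta}\binom{\beta}{\gamma}\,\partial^\gamma f\,\partial^{\beta-\gamma}g,
\end{equation*}
where $\gamma\le\beta$ is understood componentwise, so that the number of terms is bounded by a constant depending only on $k$ and $d$. First I would treat the $L^\infty$ part: since $\|\partial^\gamma f\|_{L^\infty}\le\|f\|_{C^k}$ and $\|\partial^{\beta-\gamma}g\|_{L^\infty}\le\|g\|_{C^k}$ whenever $|\gamma|,|\beta-\gamma|\le k$, each product obeys $\|\partial^\gamma f\,\partial^{\beta-\gamma}g\|_{L^\infty}\le\|f\|_{C^k}\|g\|_{C^k}$, and summing over $\gamma\le\beta$ and over all $|\beta|\le k$ yields $\sum_{|\beta|\le k}\|\partial^\beta(fg)\|_{L^\infty}\le C\|f\|_{C^{k,\alpha}}\|g\|_{C^{k,\alpha}}$.

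For the top-order seminorm $\sup_{|\beta|=k}[\partial^\beta(fg)]_\alpha$ I would use the elementary product estimate
\begin{equation*}
  [uv]_\alpha\le\|u\|_{L^\infty}[v]_\alpha+\|v\|_{L^\infty}[u]_\alpha,
\end{equation*}
which follows by writing $u(x)v(x)-u(y)v(y)=u(x)\big(v(x)-v(y)\big)+\big(u(x)-u(y)\big)v(y)$, dividing by $|x-y|^\alpha$, and taking the supremum. Applying this with $u=\partial^\gamma f$ and $v=\partial^{\beta-\gamma}g$ (where $|\beta|=k$), the $L^\infty$-factors are again controlled by $\|f\|_{C^k}$ and $\|g\|_{C^k}$, so the task reduces to bounding $[\partial^\mu f]_\alpha$ and $[\partial^\mu g]_\alpha$ for every $|\mu|\le k$ by the full $C^{k,\alpha}$ norm.

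The main obstacle — in fact the only nontrivial point — is controlling the $\alpha$-Hölder seminorms of the \emph{lower-order} derivatives $\partial^\mu f$ with $|\mu|<k$. For such $\mu$ the function $\partial^\mu f$ is at least $C^1$, hence Lipschitz, and on a bounded domain the mean-value bound $|w(x)-w(y)|\le\|\nabla w\|_{L^\infty}|x-y|$ gives $[\partial^\mu f]_\alpha\le(\mathrm{diam}\,\Omega)^{1-\alpha}\|\nabla\partial^\mu f\|_{L^\infty}\le C\|f\|_{C^k}$; this is precisely where boundedness of $\Omega$ (together with the quasiconvexity needed for the mean-value estimate, which holds on the torus of the application) enters. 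When $|\mu|=k$ one instead bounds $[\partial^\mu f]_\alpha\le\|f\|_{C^{k,\alpha}}$ directly from the definition of the norm. Combining these bounds, each term satisfies $[\partial^\gamma f\,\partial^{\beta-\gamma}g]_\alpha\le C\|f\|_{C^{k,\alpha}}\|g\|_{C^{k,\alpha}}$, and summing over the finitely many $\gamma\le\beta$ with $|\beta|=k$ and adding the $L^\infty$ contribution completes the estimate, with $C$ depending only on $k$ (and $d$, $\Omega$).
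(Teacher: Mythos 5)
Your proof follows essentially the same route as the paper's: expand $\partial^\beta(fg)$ by the Leibniz rule and split each product difference as $u(x)\big(v(x)-v(y)\big)+\big(u(x)-u(y)\big)v(y)$ to control the top-order H\"older seminorm. In fact your write-up is more complete than the paper's, whose final inequality silently uses $|(\partial^\eta f)(x)-(\partial^\eta f)(y)|\le |x-y|^\alpha\|f\|_{C^{k,\alpha}}$ even for $|\eta|<k$ --- exactly the point you justify via the mean-value estimate on a bounded (quasiconvex) domain, since the $C^{k,\alpha}$ norm only contains the seminorms of the order-$k$ derivatives --- and which also omits the routine $L^\infty$ part of the norm.
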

\begin{proof}
  We have, for any $x, y\in\Omega$ and $\beta\in\N^d_0$ with $|\beta|=k$,
  \begin{multline*}
    \big| ( \partial^\beta  (fg))(x) -
    ( \partial^\beta  (fg))(y)\big|
    \\
    \begin{aligned}[t]
    & = \left| \sum_{\eta\leq \beta}\binom{\beta}{\eta}\left[ (\partial^\eta u)(x)(\partial^{\beta-\eta} v) (x) - (\partial^\eta u)(y)(\partial^{\beta-\eta}v)(y)  \right]\right|
      \\ & \leq
           \sum_{\eta\leq \beta}\binom{\beta}{\eta} \left| (\partial^\eta u)(x) - (\partial^\eta u)(y)\right| \left| (\partial^{\beta-\eta} v) (x) \right| + \left|(\partial^{\beta-\eta} v) (x)  - \partial^{\beta-\eta}v)(y)\right| \left| (\partial^\eta u)(y) \right|
           \\ & \leq
                \sum_{\eta\leq \beta}\binom{\beta}{\eta} 2|x-y|^\alpha
                \|v\|_{C^{k, \alpha}(\Omega)} \| u \|_{C^{k, \alpha}(\Omega)}.
    \end{aligned}
  \end{multline*}
\end{proof}
\begin{lemma}
  \label{lemma:Cka-expdiff}
  Let $k\in\N_0$, $\alpha\in [0,1)$, and $d\in\N$. There exists a constant $C$ that depends on
  $k$ and on $d$ such that for all $f,g\in C^{k, \alpha}(\bbT^d; \bbC)$,
  \begin{multline*}
    \| e^f - e^g \|_{C^{k, \alpha}(\bbT^d)} \\
    \leq C e^{\|f\|_{L^{\infty}(\bbT^d)} +  \|g\|_{L^{\infty}(\bbT^d)}}\left( \| f \|_{C^{k}(\bbT^d)}  + 1\right)^{k} \left( \| f - g\|_{C^{k}(\bbT^d)} + 1 \right)^k \| f \|_{C^{k, \alpha}(\bbT^d)}\| f - g\|_{C^{k, \alpha}(\bbT^d)}.
  \end{multline*}
\end{lemma}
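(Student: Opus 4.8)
The plan is to reduce the estimate to the two preceding lemmas---Lemma~\ref{lemma:Cka-exp} (bounding $\|e^\phi\|_{C^{k,\alpha}}$) and the multiplicativity of $C^{k,\alpha}(\bbT^d)$, Lemma~\ref{lemma:Cka-mult}---by exploiting the elementary integral representation
\[
  e^f-e^g = (f-g)\int_0^1 e^{\phi_t}\,\dd t,
  \qquad \phi_t := (1-t)g + t f ,
\]
which holds pointwise on $\bbT^d$ because $\frac{\dd}{\dd t}e^{\phi_t(x)} = (f(x)-g(x))e^{\phi_t(x)}$ together with $\phi_0=g$, $\phi_1=f$. Writing $w:=\int_0^1 e^{\phi_t}\,\dd t$, the first step is to apply Lemma~\ref{lemma:Cka-mult} (for $\alpha\in(0,1)$; the Leibniz product rule on $C^k$ for $\alpha=0$) to pull out the difference factor,
\[
  \| e^f - e^g \|_{C^{k,\alpha}} \le C\,\| f-g\|_{C^{k,\alpha}}\,\| w \|_{C^{k,\alpha}} ,
\]
thereby isolating the linear dependence on $f-g$.

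Next I would bound $\|w\|_{C^{k,\alpha}}$ by the integral of the norms. For each multi-index $\beta$ one has $\partial^\beta w(x) = \int_0^1 \partial^\beta e^{\phi_t}(x)\,\dd t$ (differentiation under the integral sign is legitimate since $t\mapsto\phi_t$ is affine and $e^{\phi_t}$ is smooth in the spatial variable with locally uniform bounds), so taking suprema and Hölder difference quotients inside the integral and using the triangle inequality yields
\[
  \| w \|_{C^{k,\alpha}} \le \int_0^1 \| e^{\phi_t} \|_{C^{k,\alpha}}\,\dd t .
\]
It is worth stressing that no Bochner-space argument is needed here: this is merely ``sup of an integral $\le$ integral of the sup'' for scalar integrands, which avoids the circularity of having to know in advance that $\exp$ is continuous on $C^{k,\alpha}$. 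Lemma~\ref{lemma:Cka-exp} then applies to each slice, giving $\| e^{\phi_t} \|_{C^{k,\alpha}} \le C\, e^{\|\phi_t\|_{L^\infty}}\, \|\phi_t\|_{C^{k,\alpha}}\,(\|\phi_t\|_{C^k}+1)^k$.

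It remains to control the three $\phi_t$-norms uniformly in $t\in[0,1]$ by convexity. The sup norm is the delicate one: since $\phi_t$ is a convex combination, $\|\phi_t\|_{L^\infty}\le\max(\|f\|_{L^\infty},\|g\|_{L^\infty})\le\|f\|_{L^\infty}+\|g\|_{L^\infty}$, which is exactly what produces the \emph{sharp} exponential prefactor $e^{\|f\|_{L^\infty}+\|g\|_{L^\infty}}$. This is precisely why I avoid the naive factorization $e^f-e^g = -e^f(e^{g-f}-1)$: combining Lemmas~\ref{lemma:Cka-exp} and \ref{lemma:Cka-mult} on that product would cost $e^{\|f\|_{L^\infty}}\cdot e^{\|g-f\|_{L^\infty}}$, i.e.\ a factor as large as $e^{2\|f\|_{L^\infty}+\|g\|_{L^\infty}}$, which overshoots the claim. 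For the two remaining norms I would write $\phi_t = f-(1-t)(f-g)$ to get $\|\phi_t\|_{C^k}\le\|f\|_{C^k}+\|f-g\|_{C^k}$ and $\|\phi_t\|_{C^{k,\alpha}}\le\|f\|_{C^{k,\alpha}}+\|f-g\|_{C^{k,\alpha}}$, and then split $(\|f\|_{C^k}+\|f-g\|_{C^k}+1)^k\le(\|f\|_{C^k}+1)^k(\|f-g\|_{C^k}+1)^k$. Collecting factors gives the stated bound.

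The only genuine obstacle is the control of the exponential factor just described; the integral representation makes it automatic, whereas any route passing through a black-box product estimate loses it. The rest is routine convexity bookkeeping, with one caveat worth flagging: the factor $\|f\|_{C^{k,\alpha}}$ in the statement actually emerges as $\|f\|_{C^{k,\alpha}}+\|f-g\|_{C^{k,\alpha}}$ (from bounding $\|\phi_t\|_{C^{k,\alpha}}$), since $f-g$ cannot be eliminated from the interpolant $\phi_t$. This strengthening of the prefactor is immaterial for the application in Proposition~\ref{lemma:diffusion-assumption}, where only linear-in-$\|f-g\|_{C^{k,\alpha}}$ control with an exponential-in-sup-norms prefactor is needed, and it is consistent with the behaviour at $f=0$, where the stated form would otherwise force the right-hand side to vanish.
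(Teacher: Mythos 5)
Your proof is correct and takes a genuinely different route from the paper's. The paper argues via the factorization $e^f-e^g=e^f(1-e^{g-f})$, applying Lemma~\ref{lemma:Cka-mult} and Lemma~\ref{lemma:Cka-exp} as black boxes, with the $L^\infty$ part of $1-e^{g-f}$ treated by the mean-value estimate \eqref{eq:Cka-1}; your integral representation $e^f-e^g=(f-g)\int_0^1 e^{(1-t)g+tf}\,\rd t$ instead extracts the factor $f-g$ first and controls the interpolants by convexity. The trade-off you identify is real and cuts both ways: the paper's route reproduces the prefactor $\|f\|_{C^{k,\alpha}}$ exactly as stated but, as you correctly observe, yields the exponential $e^{\|f\|_{L^\infty}+\|g-f\|_{L^\infty}}\le e^{2\|f\|_{L^\infty}+\|g\|_{L^\infty}}$ rather than $e^{\|f\|_{L^\infty}+\|g\|_{L^\infty}}$, since the pointwise cancellation $|e^{f(x)}e^{(g-f)(x)}|=|e^{g(x)}|$ is invisible once the product lemma decouples the two norms; your route achieves the stated exponential but replaces $\|f\|_{C^{k,\alpha}}$ by $\|f\|_{C^{k,\alpha}}+\|f-g\|_{C^{k,\alpha}}$. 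So each argument proves a slightly different variant, and neither proves the literal statement --- which, as your closing remark essentially shows, is false as written: for $f\equiv 0$ and $g\not\equiv 0$ the right-hand side vanishes while the left-hand side does not (the same degeneracy affects Lemma~\ref{lemma:Cka-exp} at $f\equiv 0$; the intended bounds presumably carry a factor such as $\|f\|_{C^{k,\alpha}}+1$). Both variants are equally serviceable downstream: the proof of Proposition~\ref{lemma:diffusion-assumption} only needs a bound exponential in the $C^{k,\alpha}$-norms of $a_1,a_2$ and linear in $\|a_1-a_2\|_{C^{k,\alpha}}$, which both forms deliver. A final point in your favour: Lemma~\ref{lemma:Cka-mult} is stated only for $\alpha\in(0,1)$ while the present lemma allows $\alpha\in[0,1)$, and you handle $\alpha=0$ separately via the Leibniz rule, a case the paper's two-line proof silently glosses over.
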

\begin{proof}
    The statement follows from the equality $e^f - e^g = e^f(1-e^{g-f})$ and the
    application of Lemmas \ref{lemma:Cka-mult} and \ref{lemma:Cka-exp}.
    Note that the term $\| 1 - e^{g-f}\|_{L^\infty}$
    can be treated as in \eqref{eq:Cka-1}.
\end{proof}
\begin{lemma}
  \label{lemma:CH-ineq}
  Suppose that $k\in \N_0$ and either
  \begin{itemize}
    \item $s=0$ and $0\leq t < 1$, or
    \item $0< s<t<1$.
  \end{itemize}
    Then, there exists $C=C(k,s,t)>0$ such that for all $a\in C^{k, t}(\bbT^d; \bbC)$ and
  all $u\in H^{k+s}(\bbT^d; \bbC)$,
  \begin{equation*}
    \| ua \|_{H^{k+s}(\bbT^d)} \leq C \|a\|_{C^{k, t}(\bbT^d)} \|u\|_{H^{k+s}(\bbT^d)}.
  \end{equation*}
\end{lemma}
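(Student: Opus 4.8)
The plan is to reduce the estimate to the integer-order Leibniz rule together with a single genuinely fractional ingredient, namely a product estimate for the Gagliardo seminorm of order $s$. Throughout I would use the equivalent norm
$$\norm[H^{k+s}(\bbT^d)]{v}^2\sim\sum_{|\beta|\le k}\norm[L^2(\bbT^d)]{\partial^\beta v}^2+\sum_{|\beta|=k}[\partial^\beta v]_s^2,$$
where for $0<s<1$ the seminorm is $[w]_s^2:=\int_{\bbT^d}\int_{\bbT^d}\frac{|w(x)-w(y)|^2}{|x-y|^{d+2s}}\,dx\,dy$, the seminorm terms being absent when $s=0$ (so that $H^k$ carries its usual integer-order norm). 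By the Leibniz rule, $\partial^\beta(ua)=\sum_{\eta\le\beta}\binom{\beta}{\eta}\,\partial^{\beta-\eta}a\,\partial^\eta u$, so it suffices to bound each summand in $L^2$ (for $|\beta|\le k$) and in the seminorm $[\,\cdot\,]_s$ (for $|\beta|=k$).

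For the $L^2$ contributions I would use that $|\beta-\eta|\le k$ forces $\partial^{\beta-\eta}a\in C^{0,t}\hookrightarrow L^\infty$ with $\norm[L^\infty(\bbT^d)]{\partial^{\beta-\eta}a}\le\norm[C^{k,t}(\bbT^d)]{a}$, while $\partial^\eta u\in L^2$ with $\norm[L^2(\bbT^d)]{\partial^\eta u}\le\norm[H^{k+s}(\bbT^d)]{u}$. Hölder's inequality then gives $\norm[L^2(\bbT^d)]{\partial^{\beta-\eta}a\,\partial^\eta u}\le\norm[C^{k,t}(\bbT^d)]{a}\,\norm[H^{k+s}(\bbT^d)]{u}$, and summing over $\eta\le\beta$ and $|\beta|\le k$ yields the $L^2$ part. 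When $s=0$ this already completes the proof.

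For $0<s<t<1$ there remain the seminorm contributions with $|\beta|=k$. Writing $\mu:=\beta-\eta$ and $g:=\partial^\eta u$, I would split
$$\partial^\mu a(x)\,g(x)-\partial^\mu a(y)\,g(y)=\partial^\mu a(x)\big(g(x)-g(y)\big)+\big(\partial^\mu a(x)-\partial^\mu a(y)\big)g(y).$$
Viewing $[\,\cdot\,]_s$ as the $L^2(\bbT^d\times\bbT^d)$-norm of the difference quotient and applying the triangle inequality, the first piece is controlled by $\norm[L^\infty(\bbT^d)]{\partial^\mu a}\,[g]_s\le\norm[C^{k,t}(\bbT^d)]{a}\,\norm[H^{k+s}(\bbT^d)]{u}$, since $|\eta|\le k$ gives $g\in H^{k+s-|\eta|}\hookrightarrow H^s$ with $[g]_s\le C\norm[H^{k+s}(\bbT^d)]{u}$. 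The second piece is where $s<t$ enters: using $|\partial^\mu a(x)-\partial^\mu a(y)|\le C\norm[C^{k,t}(\bbT^d)]{a}\,|x-y|^t$, its contribution to $[\,\cdot\,]_s^2$ is at most
$$C\norm[C^{k,t}(\bbT^d)]{a}^2\int_{\bbT^d}|g(y)|^2\Big(\int_{\bbT^d}|x-y|^{2t-2s-d}\,dx\Big)dy,$$
and the inner integral is finite and independent of $y$ precisely because $2(t-s)-d>-d$, i.e. $t>s$, so it equals some $C(s,t,d)$; the remaining factor is $\norm[L^2(\bbT^d)]{g}^2\le\norm[H^{k+s}(\bbT^d)]{u}^2$. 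Summing over $\eta\le\beta$, over $|\beta|=k$, and combining with the $L^2$ estimate closes the argument.

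The main obstacle — and the sole reason for the hypothesis $s<t$ — is the integrability near the diagonal of the kernel $|x-y|^{2(t-s)-d}$ produced by the term in which the Hölder factor $\partial^\mu a$ carries the difference; every other step is a routine application of the Leibniz rule, the embedding $C^{k,t}\hookrightarrow C^k\hookrightarrow L^\infty$ for derivatives, and Hölder's inequality.
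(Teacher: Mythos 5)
Your proof is correct and follows essentially the same route as the paper's: both reduce the problem via the Leibniz rule to estimating a product of a $C^{0,t}$ function with an $H^s$ function, use the Sobolev--Slobodecki characterization of the fractional norm, split the difference $(ag)(x)-(ag)(y)$ into a term carrying the Hölder increment of $a$ and a term carrying the increment of $g$, and close the argument by the integrability of the kernel $|h|^{2(t-s)-d}$ near the diagonal, which is exactly where $t>s$ enters. The only cosmetic difference is that the paper states the reduction to $k=0$ in one sentence and then works purely in that case, whereas you carry the multi-index bookkeeping explicitly throughout.
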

\begin{proof}
  The case $s=0$ follows from Leibniz rule. To prove the statement for $s>0$, it
  is therefore sufficient to consider the case $k=0$.
  We use the equivalence between the $H^s(\bbT^d)$ norm and the
 Sobolev-Slobodecki norm (see, e.g., \cite{Benyi2013}), which implies
 \begin{equation*}
   \| au\|^2_{H^s(\bbT^d)}\simeq\int_{\bbT^d} \int_{[-\frac{1}{2}, \frac{1}{2})^d} \frac{|( au )(x+h) - (au)(x)|^2}{|h|^{d+2s}}dh\,dx.
 \end{equation*}
 Then,
 \begin{align*}
   \| au\|^2_{H^s(\bbT^d)}
   & \begin{multlined}[t][.7\textwidth]
       \lesssim
       \int_{\bbT^d} \int_{[-\frac{1}{2}, \frac{1}{2})^d} \frac{| a (x+h) - a(x)|^2|u(x)|^2}{|h|^{d+2s}}dh\,dx
\\       +
       \int_{\bbT^d} \int_{[-\frac{1}{2}, \frac{1}{2})^d} \frac{|u(x+h) - u(x)|^2|a(x+h)|^2}{|h|^{d+2s}}dh\,dx
     \end{multlined}
   \\
   & \lesssim
     \|a\|_{C^{0, t}}^2\int_{\bbT^d} \int_{[-\frac{1}{2}, \frac{1}{2})^d}\frac{|u(x)|^2}{|h|^{d-2(t-s)}}dh\,dx + \| a\|_{L^\infty}^2 \| u\|^2_{H^s}
   \\ & \leq C(t-s)
        \|a\|_{C^{0, t}}^2 \| u \|_{L^2}^2+ \| a\|_{L^\infty}^2 \| u\|^2_{H^s}.
 \end{align*}
\end{proof}
\subsection{Proof of Proposition \ref{lemma:diffusion-assumption}}
\label{sec:diffusion-assumption-proof}
\begin{proof}[Proof of Proposition \ref{lemma:diffusion-assumption}]
  Consider $s>0$ with $s\notin\N$ that we will fix at the end of the proof. Let
  $k =\lfloor s \rfloor$, $\alpha = s-k+\delta_1$ with sufficiently small
  $\delta_1>0$ so that $\alpha<1$. Let then $a_1, a_2\in C^{k,
    \alpha}(\bbT^d; \bbC)$, and $u_i = \calG(a_i)$, $i=1,2$.   When $s\in (0,1)$ it follows from Proposition \ref{prop:Hs-poisson}
  that
  \begin{equation}
    \label{eq:H1s-ui}
    \| u_i\|_{\dotH^{1+s}} \leq Ce^{\|a_i\|_{L^\infty}}\left(  \|f\|_{\dotH^{-1+s}}+ e^{2\|a_i\|_{L^\infty}}\|f\|_{\dotH^{-1}}\|a_i\|_{C^{0, s}}\right).
  \end{equation}
  We now consider the case $s\geq 1$ and use the equality
  \begin{equation}
    \label{eq:laplace-decomp}
    -\Delta u_i = e^{-a_i} \left( \nabla e^{a_i}\cdot\nabla u_i + f \right).
  \end{equation}
  Then,
  \begin{align*}
    \| u_i\|_{\dotH^{s+1}}
    & \lesssim \| e^{-a_i}\left( \nabla e^{a_i}\cdot\nabla u_i + f \right) \|_{\dotH^{s-1}}
    \\
    \overset{\text{L.~\ref{lemma:CH-ineq}}}
    &{\lesssim}
      \| e^{-a_i} \|_{C^{k-1, \alpha}}
      \left( \| e^{a_i}\|_{C^{k, \alpha}} \|u_i\|_{\dotH^{s}} + \|f \|_{\dotH^{s-1}}\right) ,
      \\ \overset{\text{L.~\ref{lemma:Cka-exp}}}
    &{\lesssim}
      e^{\|a_i\|_{L^{\infty}}} \| a_i\|_{C^{k-1, \alpha}}\left( \| a_i\|_{C^{k-1}}+1 \right)^{k-1} \left(e^{\|a_i\|_{L^{\infty}}} \| a_i\|_{C^{k, \alpha}}\left( \| a_i\|_{C^{k}} +1\right)^{k}\|u_i\|_{\dotH^{s}} + \| f\|_{\dotH^{s-1}}  \right).
  \end{align*}
  We can conclude from the above and \eqref{eq:H1s-ui} that there exist $\beta_1, \beta_2>0$ such that, for all
  $s\geq 0$,
  \begin{equation}
    \label{eq:bound-ui}
    \| u_i \|_{\dotH^{s+1}} \lesssim e^{\beta_1 \|a_i\|_{L^\infty}} (\| a_i\|_{C^{k, \alpha}}+1)^{\beta_2 k^2} \|f\|_{\dotH^{s-1}}.
  \end{equation}
  Let now again $s\in(0,1)$. Since
  \begin{equation*}
    -\nabla\cdot\left[ e^{a_1}(\nabla u_1 - \nabla u_2) \right] = \nabla\cdot\left[ (e^{a_1} - e^{a_2})\nabla u_2 \right],
  \end{equation*}
from Proposition \ref{prop:Hs-poisson}  we have
  \begin{multline*}
    \| u_1 - u_2\|_{\dotH^{1+s}} \leq Ce^{\|a_1\|_{L^\infty}}\bigg(
    \|\nabla\cdot\left[ (e^{a_1} - e^{a_2})\nabla u_2\right]  \|_{\dotH^{-1+s}} \\ + e^{2\|a_1\|_{L^\infty}}\|\nabla\cdot\left[ (e^{a_1} - e^{a_2})\nabla u_2\right]  \|_{\dotH^{-1}}\|e^{a_1}\|_{C^{0, s}}\bigg).
  \end{multline*}
  We then estimate
  \begin{multline*}
    \|\nabla\cdot\left[ (e^{a_1} - e^{a_2})\nabla u_2\right]  \|_{\dotH^{-1+s}}
    \\
    \begin{aligned}
    &\leq
      \| (e^{a_1} - e^{a_2})\nabla u_2  \|_{\dotH^{s}}
    \\
    \overset{\text{L.~\ref{lemma:CH-ineq}}}
    &{\leq}
      \| e^{a_1} - e^{a_2}\|_{C^{0, \alpha}}\| u_2  \|_{\dotH^{1+s}}
    \\
    \overset{\text{L.~\ref{lemma:Cka-expdiff}}}
    &{\lesssim}
 e^{\|a_1\|_{L^{\infty}} +  \|a_2\|_{L^{\infty}}} \| a_1 \|_{C^{0, \alpha}}\| a_1 - a_2\|_{C^{0, \alpha}}\| u_2  \|_{\dotH^{1+s}}
    \\
    \overset{\text{\eqref{eq:H1s-ui}}}
    &{\lesssim}
e^{\|a_1\|_{L^{\infty}} +  2\|a_2\|_{L^{\infty}}} \| a_1 \|_{C^{0, \alpha}}\| a_1 - a_2\|_{C^{0, \alpha}}
\left(  \|f\|_{\dotH^{-1+s}}+ e^{2\|a_2\|_{L^\infty}}\|f\|_{\dotH^{-1}}\|a_2\|_{C^{0, s}}\right)
    \end{aligned}
  \end{multline*}
  and
  \begin{align*}
    \|\nabla\cdot\left[ (e^{a_1} - e^{a_2})\nabla u_2\right]  \|_{\dotH^{-1}}
    &\leq
    \| (e^{a_1} - e^{a_2})\nabla u_2  \|_{L^2}
    \\ & \leq
    \| e^{a_1} - e^{a_2}\|_{L^\infty}\|u_2  \|_{\dotH^1}
    \\ & \leq
    e^{2\|a_2\|_{L^\infty}}\| a_1 - a_2\|_{L^\infty} \| f\|_{\dotH^{-1}}.
  \end{align*}
  When $s\geq 1$, from \eqref{eq:laplace-decomp} it also follows that
  \begin{equation*}
    -\Delta(u_1 - u_2) = e^{-a_1} \left( \nabla (e^{a_1} - e^{a_2})\cdot \nabla u_1 + \nabla e^{a_2}\cdot( \nabla u_1 - \nabla u_2 )  \right) + (e^{-a_1} -  e^{-a_2})\left( \nabla e^{a_2}\cdot\nabla u_2   \right).
  \end{equation*}
  Using Lemmas \ref{lemma:Cka-mult} and \ref{lemma:Cka-expdiff}, we estimate,
  for $s\geq1$,
  \begin{multline*}
    \|\nabla (e^{a_1} - e^{a_2})\cdot \nabla u_1 \|_{\dotH^{s-1}}\\
    \leq C
    e^{\|a_1\|_{L^\infty} + \|a_2\|_{L^\infty}}\left( \|a_1\|_{C^k} + 1
    \right)^k\left( \|a_1-a_2\|_{C^k} + 1 \right)^k\|a_1\|_{C^{k,\alpha}} \|a_1 - a_2\|_{C^{k, \alpha}}\| u_1\|_{\dotH^{s}},
  \end{multline*}
  then
  \begin{equation*}
    \|\nabla e^{a_2}\cdot( \nabla u_1 - \nabla u_2 )  \|_{\dotH^{s-1}} \leq C e^{\|a_2\|_{L^\infty}} \left( \|a_2\|_{C^k}+1 \right)^k\|a_2\|_{C^{k, \alpha}} \| u_1 - u_2 \|_{\dotH^{s}},
  \end{equation*}
  and finally
  \begin{multline*}
    \|(e^{-a_1} -  e^{-a_2})\left( \nabla e^{a_2}\cdot\nabla u_2   \right)
    \|_{H^{s-1}}
    \leq C
    e^{\|a_1\|_{L^\infty} + 2\|a_2\|_{L^\infty}}\left( \|a_1\|_{C^{k-1}}  +
      1\right)^{k-1}\\ \times  \left( \|a_2\|_{C^k}+ 1\right)^{k-1}\left( \|a_1-a_2\|_{C^k} +1 \right)^{k-1}
    \|a_1\|_{C^{k-1,\alpha}} \|a_1 - a_2\|_{C^{k-1, \alpha}} \|a_2\|^{2}_{C^{k, \alpha}} \| u_2\|_{H^{s}}.
  \end{multline*}
  Iterating as above,
  we find that there exist $\beta_3, \beta_4>0$ such that
  \begin{multline*}
    \| u_1 - u_2 \|_{H^{s+1}} \\
    \leq e^{\beta_3 (\|a_1\|_{L^\infty}+ \|a_2\|_{L^\infty})} \left(
      \|a_1\|_{C^{k, \alpha}} + \|a_2\|_{C^{k, \alpha}} + \|a_1-a_2\|_{C^{k, \alpha}} +1\right)^{\beta_4 k^2} \| a_1 -a_2\|_{C^{k, \alpha}}.
  \end{multline*}
We have therefore shown that
\begin{equation*}
\norm[\dotH^{s+1}]{\calG(a)}\le C\exp(\alpha\norm[C^{k,\alpha}]{a}),
\end{equation*}
and
\begin{equation}
  \label{eq:cont-G-proof}
\norm[\dotH^{s+1}]{\calG(a_1)-\calG(a_2)}\le C \exp(\alpha(\norm[C^{k,\alpha}]{a_1}+\norm[C^{k,\alpha}]{a_2}))\norm[C^{k,\alpha}]{a_1-a_2}.
\end{equation}
 We now fix $s = r-\delta$ for
 sufficiently small $\delta>0$ so that $\dotH^{s +1} = \dotH^{r+1-\delta}$.
 If $r\notin\N$,
 \begin{equation*}
   \dotH^{r+d/2} \hookrightarrow C^{\lfloor r \rfloor, r-\lfloor r \rfloor} = C^{\lfloor s \rfloor, s-\lfloor s \rfloor+\delta} \eqqcolon C^{k, \alpha}.
 \end{equation*}
 If $r\in\N$, instead, we have
 \begin{equation*}
   \dotH^{r+d/2} \hookrightarrow C^{r-1,\gamma}, \qquad \forall \gamma\in (0,1),
 \end{equation*}
 hence
 \begin{equation*}
   \dotH^{r+d/2}\hookrightarrow C^{\lfloor s\rfloor, s-\lfloor s \rfloor+\delta_1}  \eqqcolon C^{k, \alpha}
 \end{equation*}
 as long as $\delta_1  < \delta$ and therefore $s-\lfloor s \rfloor +\delta_1 =
 1-\delta+\delta_1 \in (0,1)$.
 This concludes the proof of the bounds in Assumption \ref{ass:G}.
 To show the holomorphy of $\calG$
 from a strip in $\calX + i\calX$ to  $\calY_\bbC$,
 we start by remarking that, for two real-valued functions $a, b\in \calX$,
 \begin{equation*}
   \Re(e^{a+ib}) =  e^a\cos(b).
 \end{equation*}
 Hence, if $\|b\|_{L^\infty(\Omega)}<\pi/2$, then $\Re(e^{a +ib}(x))>0$ for
 almost all $x\in \Omega$.
 By the continuous embedding of $\calX $ into
 $L^\infty(\Omega)$, there exists therefore $\theta$ such that $\Re(e^z)>0$ for
 all $z$ in the strip $\calS^\calX_\theta$. The operator $\tilde\calG:c\mapsto
 \tilde u$, where $\tilde u$ is solution to $-\nabla\cdot(c\nabla\tilde u) = f$ in
 $\bbT^d$ is holomorphic from $\{c\in \calX_{\bbC}: \Re (c(x))\geq c_0>0 \text{ a.e.}\} \to \dotH^1_{\bbC}$.
 By the holomorphy of the exponential,
 we conclude that the map $\calG$ is holomorphic from
 $\calS^\calX_\theta \to \dotH^1_{\bbC}$.
We have shown in \eqref{eq:cont-G-proof} that $\calG$ is continuous from
$\calS^\calX_\theta$ to $\calY_\bbC = \dotH^{r+1-\delta}_\bbC\hookrightarrow \dotH^1_\bbC$.
It follows from \cite[Proposition 1]{HSZ22_1015} (see also Remark
\ref{remark:holomorphy-embedding}) that
$\calG : \calS^\calX_\theta\to \calY_\bbC$ is holomorphic.
\end{proof}
\bibliographystyle{amsalpha}
\bibliography{mybibfile}

\newcommand{\etalchar}[1]{$^{#1}$}
\providecommand{\bysame}{\leavevmode\hbox to3em{\hrulefill}\thinspace}
\providecommand{\MR}{\relax\ifhmode\unskip\space\fi MR }
\providecommand{\MRhref}[2]{%
  \href{http://www.ams.org/mathscinet-getitem?mr=#1}{#2}
}
\providecommand{\href}[2]{#2}
\begin{thebibliography}{BCDM17}

\bibitem[BAS10]{MR2566594}
Marcel Bieri, Roman Andreev, and {\relax Ch}ristoph Schwab, \emph{Sparse tensor
  discretization of elliptic {SPDE}s}, SIAM J. Sci. Comput. \textbf{31}
  (2009/10), no.~6, 4281--4304. \MR{2566594}

\bibitem[BCDM17]{BCDM2016}
Markus Bachmayr, Albert Cohen, Ronald DeVore, and Giovanni Migliorati,
  \emph{Sparse polynomial approximation of parametric elliptic {PDE}s. {P}art
  {II}: {L}ognormal coefficients}, ESAIM Math. Model. Numer. Anal. \textbf{51}
  (2017), no.~1, 341--363. \MR{3601011}

\bibitem[BO13]{Benyi2013}
\'Arp\'ad B\'enyi and Tadahiro Oh, \emph{The {S}obolev inequality on the torus
  revisited}, Publ. Math. Debrecen \textbf{83} (2013), no.~3, 359--374.
  \MR{3119672}

\bibitem[Bog98]{Bogachev1998}
Vladimir~I. Bogachev, \emph{Gaussian measures}, Mathematical Surveys and
  Monographs, vol.~62, American Mathematical Society, Providence, RI, 1998.
  \MR{1642391}

\bibitem[CD15]{CoDe2015}
Albert Cohen and Ronald DeVore, \emph{Approximation of high-dimensional
  parametric pdes}, Acta Numerica \textbf{24} (2015), 1–159.

\bibitem[CDN10]{CoDaNi2010}
Martin Costabel, Monique Dauge, and Serge Nicaise, \emph{{Corner Singularities
  and Analytic Regularity for Linear Elliptic Systems. Part I: Smooth
  domains.}}, 211 pages, February 2010.

\bibitem[DM04]{MujDinAppPrBSpcI}
Se\'{a}n Dineen and Jorge Mujica, \emph{The approximation property for spaces
  of holomorphic functions on infinite-dimensional spaces. {I}}, J. Approx.
  Theory \textbf{126} (2004), no.~2, 141--156. \MR{2045536}

\bibitem[DM10]{DinMujiAppBSpcII}
\bysame, \emph{The approximation property for spaces of holomorphic functions
  on infinite dimensional spaces. {II}}, J. Funct. Anal. \textbf{259} (2010),
  no.~2, 545--560. \MR{2644113}

\bibitem[DM12]{DinMujiAppBSpcIII}
\bysame, \emph{The approximation property for spaces of holomorphic functions
  on infinite dimensional spaces {III}}, Rev. R. Acad. Cienc. Exactas F\'{\i}s.
  Nat. Ser. A Mat. RACSAM \textbf{106} (2012), no.~2, 457--469. \MR{2978926}

\bibitem[DN97]{DNCirculant97}
C.~R. Dietrich and G.~N. Newsam, \emph{Fast and exact simulation of stationary
  {G}aussian processes through circulant embedding of the covariance matrix},
  SIAM J. Sci. Comput. \textbf{18} (1997), no.~4, 1088--1107. \MR{1453559}

\bibitem[DNSZ23]{DNSZ2023}
Dinh Dung, Van~Kien Nguyen, Christoph Schwab, and Jakob Zech, \emph{Analyticity
  and sparsity in uncertainty quantification for {PDE}s with {G}aussian random
  field inputs}, Lecture Notes in Mathematics, vol. 2334, Springer, Cham, 2023.
  \MR{4689622}

\bibitem[DZ14]{DPZ2014}
Giuseppe {Da Prato} and Jerzy Zabczyk, \emph{Stochastic equations in infinite
  dimensions}, second ed., Encyclopedia of Mathematics and Its Applications,
  Vol. 152, Cambridge: Cambridge University Press, 2014.

\bibitem[EGSZ14]{EGSZ14_1045}
Martin Eigel, Claude~Jeffrey Gittelson, Christoph Schwab, and Elmar Zander,
  \emph{Adaptive stochastic {G}alerkin {FEM}}, Comput. Methods Appl. Mech.
  Engrg. \textbf{270} (2014), 247--269. \MR{3154028}

\bibitem[FFMZ24]{Franco2024}
Nicola~Rares Franco, Daniel Fraulin, Andrea Manzoni, and Paolo Zunino, \emph{On
  the latent dimension of deep autoencoders for reduced order modeling of
  {PDE}s parametrized by random fields}, Adv. Comput. Math. \textbf{50} (2024),
  no.~5, Paper No. 96, 59. \MR{4791478}

\bibitem[Git10]{G10_109}
C.~J. Gittelson, \emph{Stochastic {G}alerkin discretization of the log-normal
  isotropic diffusion problem}, Math. Models Methods Appl. Sci. \textbf{20}
  (2010), no.~2, 237--263. \MR{2649152}

\bibitem[GKN{\etalchar{+}}15]{GKNS15_1104}
I.~G. Graham, Frances~Y. Kuo, J.~A. Nichols, R.~Scheichl, Christoph Schwab, and
  Ian~H. Sloan, \emph{Quasi-{M}onte {C}arlo finite element methods for elliptic
  pdes with lognormal random coefficients}, Numerische Mathematik \textbf{131}
  (2015), no.~2, 329--368.

\bibitem[HKS21]{HKS21}
Lukas Herrmann, Magdalena Keller, and Christoph Schwab, \emph{Quasi-{M}onte
  {C}arlo {B}ayesian estimation under {B}esov priors in elliptic inverse
  problems}, Math. Comp. \textbf{90} (2021), no.~330, 1831--1860. \MR{4273117}

\bibitem[HS14]{HoangSchwab14}
Viet~Ha Hoang and {\relax Ch}ristoph Schwab, \emph{{$N$}-term {W}iener chaos
  approximation rate for elliptic {PDE}s with lognormal {G}aussian random
  inputs}, Math. Models Methods Appl. Sci. \textbf{24} (2014), no.~4, 797--826.
  \MR{3163401}

\bibitem[HSZ24]{HSZ22_1015}
Lukas Herrmann, Christoph Schwab, and Jakob Zech, \emph{Neural and spectral
  operator surrogates: unified construction and expression rate bounds}, Adv.
  Comput. Math. \textbf{50} (2024), no.~4, Paper No. 72, 43. \MR{4773234}

\bibitem[Kir97]{padraig}
Padraig Kirwan, \emph{Complexifications of multilinear and polynomial
  mappings}, 1997, Ph.D.\ thesis, National University of Ireland, Galway.

\bibitem[KLL{\etalchar{+}}23]{NeurOp}
Nikola Kovachki, Zongyi Li, Burigede Liu, Kamyar Azizzadenesheli, Kaushik
  Bhattacharya, Andrew Stuart, and Anima Anandkumar, \emph{Neural operator:
  learning maps between function spaces with applications to {PDE}s}, J. Mach.
  Learn. Res. \textbf{24} (2023), Paper No. [89], 97. \MR{4582511}

\bibitem[LP09]{LPFrame09}
H.~Luschgy and G.~Pag\`es, \emph{Expansions for {G}aussian processes and
  {P}arseval frames}, Electron. J. Probab. \textbf{14} (2009), no. 42,
  1198--1221.

\bibitem[MDW40]{Michal1940}
Aristotle~D. Michal, Roderick Davis, and Max Wyman, \emph{Polygenic functions
  in general analysis}, Ann. Scuola Norm. Super. Pisa Cl. Sci. (2) \textbf{9}
  (1940), 97--107. \MR{5781}

\bibitem[MMSZ25]{MSZII-25}
Carlo Marcati, Mario Mari\'{c}, Christoph Schwab, and Jakob Zech,
  \emph{Expression rates of operator surrogates on {G}aussian spaces}, Tech.
  report, 2025, (in preparation).

\bibitem[MST99]{Munoz1999}
Gustavo~A. {Mu\~noz}, Yannis Sarantopoulos, and Andrew Tonge,
  \emph{Complexifications of real {B}anach spaces, polynomials and multilinear
  maps}, Studia Math. \textbf{134} (1999), no.~1, 1--33. \MR{1688213}

\bibitem[Muj86]{MujiCplxAnBSpc}
Jorge Mujica, \emph{Complex analysis in {B}anach spaces}, North-Holland
  Mathematics Studies, vol. 120, North-Holland Publishing Co., Amsterdam, 1986,
  Holomorphic functions and domains of holomorphy in finite and infinite
  dimensions, Notas de Matem\'{a}tica [Mathematical Notes], 107. \MR{842435}

\bibitem[SG11]{ChSCJG11}
Christoph Schwab and Claude~Jeffrey Gittelson, \emph{Sparse tensor
  discretizations of high-dimensional parametric and stochastic {PDE}s}, Acta
  Numer. \textbf{20} (2011), 291--467. \MR{2805155}

\bibitem[SZ23]{SZ21_982}
Christoph Schwab and Jakob Zech, \emph{Deep learning in high dimension: neural
  network expression rates for analytic functions in {$L^2(\Bbb
  R^d,\gamma_d)$}}, SIAM/ASA J. Uncertain. Quantif. \textbf{11} (2023), no.~1,
  199--234. \MR{4555161}

\bibitem[WZ23]{Westermann2023Transport}
Josephine Westermann and Jakob Zech, \emph{Measure transport via polynomial
  density surrogates}, arXiv preprint (2023).

\bibitem[Zec18]{JZdiss}
Jakob Zech, \emph{Sparse-{G}rid {A}pproximation of {H}igh-{D}imensional
  {P}arametric {PDE}s}, 2018, Dissertation 25683, ETH Z\"urich,
  \url{http://dx.doi.org/10.3929/ethz-b-000340651}.

\end{thebibliography}
\end{document}